\documentclass{amsart}
\usepackage[utf8]{inputenc}

\usepackage{amssymb}
\usepackage{epsfig}
\usepackage{amsfonts}
\usepackage{amsmath}
\usepackage{euscript}
\usepackage{amscd}
\usepackage{amsthm}
\usepackage{enumerate}

\DeclareMathAlphabet{\mathpzc}{OT1}{pzc}{m}{it}
\usepackage{mathrsfs}

\DeclareSymbolFont{SY}{U}{psy}{m}{n}

\usepackage{tikz-cd}

\usepackage{ wasysym }

\usepackage{stmaryrd}

\theoremstyle{plain}

\newtheorem{thm}{Theorem}[section]
\newtheorem*{thm*}{Theorem}
\newtheorem{cor}[thm]{Corollary}
\newtheorem{lem}[thm]{Lemma}
\newtheorem{prop}[thm]{Proposition}
\theoremstyle{definition}
\newtheorem{defn}[thm]{Definition}
\newtheorem{rem}[thm]{Remark}
\newtheorem{ex}[thm]{Example}

\numberwithin{equation}{section}
\def\A{{\mathcal A}}
\def\C{{\mathbb C}}
\def\D{{\mathbb D}}

\def\cH{{\mathcal{H}}}

\def\K{{\mathscr K}}
\def\B{{\mathcal{B}}}

\def\norm#1{\left\lVert{#1}\right\rVert}
\def\setof#1{\{{#1}\}}

\def\inp#1,#2{\left\langle{#1},{#2}\right\rangle}

\def\bN{\mathbb{N}}

\def\I{\mathcal{I}}
\def\J{\mathcal J}

\def\m{\mathcal}

\def\P{\mathcal{P}}

\def\beq{\begin{eqnarray}}
\def\eeq{\end{eqnarray}}
\def\beqa{\begin{eqnarray*}}
	\def\eeqa{\end{eqnarray*}}

\def\T{\mathbb{T}}

\def\<{\langle}
\def\>{\rangle}
\def\Z{\mathbb{Z}}

\def\*{*}

\def\P{\mathcal{P}}

\def\m*{m^\*}
\def\cO{\mathcal{O}}
\def\bF{\mathbb{F}}

\def\P{\mathscr{P}}

\def\cC{\mathcal{C}}
\def\ci{{\iota}}

\def\abs#1{\left\lvert{#1}\right\rvert}

\def\st{\textrm{ } \; \vert \; \textrm{ }}

\def\cI{\mathcal{I}}

\newcounter{cnt1}
\newcounter{cnt2}
\newcounter{cnt3}
\newcommand{\blr}{\begin{list}{$($\roman{cnt1}$)$}
		{\usecounter{cnt1} \setlength{\topsep}{0pt}
			\setlength{\itemsep}{0pt}}}
	\newcommand{\bla}{\begin{list}{$($\alph{cnt2}$)$}
			{\usecounter{cnt2} \setlength{\topsep}{0pt}
				\setlength{\itemsep}{0pt}}}
		\newcommand{\bln}{\begin{list}{$($\arabic{cnt3}$)$}
				{\usecounter{cnt3} \setlength{\topsep}{0pt}
					\setlength{\itemsep}{0pt}}}
			\newcommand{\el}{\end{list}}

\usepackage{geometry}
\usepackage{hyperref}
\hypersetup{
    colorlinks=true,
    linkcolor=blue,
    filecolor=blue, 
    citecolor=blue,
    urlcolor=red
    }

\allowdisplaybreaks[1]

\usepackage{orcidlink}

\title[Dilating Semigroup Representations to the Boundary Quotient]{Dilating Semigroup Representations to the Boundary Quotient}

\author[Ujan Chakraborty]{Ujan Chakraborty \orcidlink{0000-0002-2655-4488}
}
\address{School of Mathematics and Statistics, University of Glasgow, University Avenue, Glasgow G12 8QQ, UK}
\email[Ujan Chakraborty]{Ujan.Chakraborty@glasgow.ac.uk}

\keywords{dilation, semigroup, boundary quotient, C*-envelope, non-selfadjoint operator algebra}

\subjclass[2020]{Primary 47A20, 20M30, 47L55;
Secondary 46L07, 47L25}

\date{}

\begin{document}

\begin{abstract}
    We prove that a representation of a group embeddable or right LCM cancellative semigroup may be dilated to a representation of its reduced boundary quotient $C^\star$-algebra if and only if it extends to a completely contractive representation of the reduced semigroup operator algebra. 
    We show that the latter property is satisfied not only by all constructible representations of amenable semigroups, but also a very large class of other representations, which encompasses several classical dilation theorems and the corresponding matricial von Neumann inequalities. 
    In fact, our criterion of completely contractive extension to the operator algebra turns out to be an appropriate generalisation of the matricial von Neumann inequality to semigroups more general than $\bN^k$, and dilation to the boundary quotient turns out to be an apt generalisation of unitary dilations. 
    Thus, our theorem is in spirit and in practice a generalisation of Sz.-Nagy and Ando's dilation theorems for general semigroups. 
    In addition, this also demonstrates that any completely contractive representation of the operator algebra dilates to a representation with additional relations among its generators, the new relations coming from the boundary quotient. 
    In particular, for Ore semigroups, this completely characterises which representations admit unitary dilations.  
\end{abstract}

\maketitle

\section{Introduction}

We show in this paper that if $P$ is a group-embeddable or right LCM cancellative semigroup, then every representation of $P$ that extends to a completely contractive representation of the non-selfadjoint operator algebra generated by the left-regular representation $\A_\lambda(P)$, dilates to a representation of the reduced boundary quotient $C^\star$-algebra $\partial C^\star_\lambda(P)$. 
The beauty of our technique is that it is quite a general dilation theoretic idea that we use. 
While the first mentioned property (complete contractivity) might be difficult to verify in general, it turns out to be an appropriate generalisation of the matricial von Neumann's inequality for $\bN^k$. 
And while the boundary quotient is sometimes difficult to compute, for Ore semigroups, it is just the $C^\star$-algebra of the enveloping group, and thus dilation to the boundary quotient turns out to be an appropriate generalisation of the notion of unitary dilation. 
This allows us to bring existing well-known dilation theorems like those of B. Sz.-Nagy \cite{SN53}, T. Ando \cite{An63}, and J. A. Holbrook \cite{Ho92} under the same umbrella, provides necessary and sufficient conditions for unitary dilations of Ore semigroup representations, and demonstrates that they are all manifestations of the same dilation phenomenon. 
This is considerably more general than existing generalisations of Sz.-Nagy's dilation theorem, like the results by S. Brehmer \cite{Br61}.

The general goal of dilation theory is to view maps between spaces as parts of nicer maps between potentially larger spaces. 
For example, contractions on Hilbert spaces may be dilated to unitaries on larger Hilbert spaces, upto powers \cite{SN53}: for any contraction $T$ on a Hilbert space $\cH$, there exists a unitary $U$ on a bigger Hilbert space $\K \supseteq \cH$ such that $T^n = \P_\cH U^n \vert_\cH$ 
for all $n \in \bN$
. 
This can be done for commuting pairs of contractions as well \cite{An63}. 
However, such a result fails to hold more generally for arbitrary commuting triples of contractions, as noted in \cite{Pa70}, unless they are $2 \times 2$ complex matrices, in which case power dilations work for all $k$-tuples \cite{Ho92}. 
This leads one to the following question, the answer to which is 
well-known:
\newline\textbf{Question A: } Under what conditions may commuting $k$-tuples of contractions be dilated to commuting $k$-tuples of unitaries upto powers? 
\newline\textbf{Answer A: } A contractive $k$-tuple of commuting contractions has a commuting unitary dilation upto powers if and only if it satisfies a matricial von Neumann inequality. 
\newline
Since $k$ commuting contractions induce a contractive representation of $\bN^k$, we realise that perhaps this question is answered in greater generality by viewing the representations of semigroups (in this case $\bN^k$, but potentially a lot more general) as (non-selfadjoint) operator algebras, and their dilation theory. 
For example, one may also formulate the following question:
\newline\textbf{Question B: } Under what conditions may a contractive representation of a semigroup be dilated to a unitary representation? 
Or, more generally, what might be the counterpart of a unitary dilation?

As it turns out, the boundary quotient is the correct choice for generalising unitary dilations. 
We briefly recall some of the motivation behind dilation to the ``boundary''. 
Consider the disk algebra $\A(\D)$, holomorphic functions on the unit disk $\D$ which are bounded and hence extend to continuous functions on its closure $\bar{\D}$. 
They form a non-selfadjoint algebra. 
Their closure under involution $C(\bar{\D})$ is a $C^\star$-algebra, but it is not the ``smallest'' $C^\star$-algebra into which $\A(\D)$ embeds isometrically. 
The right candidate for the latter would be all continuous functions on the unit circle, $C(\T)$, and one may verify this by observing that by the maximum modulus principle, the map that sends a function in $\A(\D)$ to its restriction on $\T$ is in fact an isometry. 
One notes that $\T$ is the (\v{S}ilov) boundary
, and this phenomenon generalises classically to all homogeneous algebras. $C^\star$-algebras.
This was one of the motivations behinds the development of the theory of $C^\star$-envelopes. 
For a (possibly noncommutative and non-selfadjoint) operator algebra $\A$, one may consider $C^\star_{env}(\A)$ to be the ``smallest'' $C^\star$-algebra into which $\A$ embeds completely isometrically. 
$C^\star$-envelopes were first defined by W. Arveson \cite{Ar69}, and he conjectured their existence in general.
In case of unital operator algebras, the existence of the $C^\star$-envelope was proved by M. Hamana through injective envelopes \cite{Ha79a} and M. Dritschel and S. A. McCullough through dilation-theoretic techniques \cite{DM05}. 
The non-unital case follows from R. Meyer's work \cite{Me01}. 
The Dritschel-McCullough dilation theorem shows that every completely contractive representation of $\A$ has a maximal dilation, and the $C^\star$-algebra generated by a maximal dilation of a completely isometric representation of $\A$ is $C^\star_{env}(\A)$, which is in general ``smaller'' than $C^\star(\A)$ (in fact, a quotient), in a similar fashion as $C(\T)$, the $C^\star$-envelope of $\A(\D)$, is a quotient of $C(\bar{\D})$. 
Moreover, ever completely contractive representation of $\A$ dilates to a representation of $C^\star_{env}(\A)$. 

One makes a connection here, that $\A(\bar \D)$ is in fact the operator algebra generated by a representation of the semigroup $\bN$ on the Hilbert space $L_2(\bar \D)$ that maps the generator $1$ to $z$. 
So, one might venture to ask what a suitable notion of boundary might be for representations of semigroups in general. 
We point out that in the context of representations of semigroups, one has a quotient object, the boundary quotient, which is in general smaller than the $C^\star$-algebra generated by the left-regular representation of the semigroup. 
The representation theory of semigroups is considerably more involved than their counterpart for groups, in the same fashion as non-selfadjoint operator algebras often offer more flexibility (and hence, hide greater complexity) in their representations than their selfadjoint counterparts. 
In constructing both universal and reduced $C^\star$-algebras of semigroups ($C^\star_s(P)$ and $C^\star_\lambda(P)$ respectively), one must account for their ideal structure \cite{Li12a}. 
Even then, one looks for ``smaller'' $C^\star$-algebras which encode complete information about $P$, and this leads us to the boundary quotient. 
For $\bN^k$, the (full and reduced) boundary quotient is isomorphic to $C(\T^k) \cong C^\star(\Z^k)$, which is a quotient of $C^\star(\bN^k)$, a much bigger $C^\star$-algebra (the Toeplitz algebra in the $k=1$ case). 
Similar phenomena are seen for free semigroups $\bF_k^+$, where the $C^\star$-algebra of the semigroup is the Cuntz-Toeplitz algebra $\mathcal{TO}_k$, while the boundary quotient is the Cuntz algebra $\mathcal{O}_k$, a quotient of $\mathcal{TO}_k$. 
This phenomenon was generalised to right-angled Artin monoids by J. Crisp and M. Laca in \cite{CL07}. 
Their construction was further extended to more general semigroups in two directions, one through full and reduced partial crossed products (e.g. \cite{LS22, KKLL22}), the other through quotienting these $C^\star$-algebras by products of complements of projections of ideals which intersect every ideal non-trivially (called the boundary quotient relations) \cite{BRRW14}. 
If $P$ embeds into a group $G$, one has the co-universal property that reduced boundary quotient $\partial C^\star_\lambda(P)$ is the ``smallest'' $C^\star$-algebra generated by a $G$-equivariant representation of $C^\star_s(P)$ \cite{KKLL22}. 
C. Sehnem proved in \cite{Se22} that $C^\star_{env}(\A_\lambda(P))$ is canonically isomorphic to $\partial C^\star_\lambda(P)$ if $P$ is group-embeddable, and the same result was proved without the group-embeddability assumption by K. Brix, C. Bruce, and A. Dor-On in \cite{BBD26} in the right LCM setting, assuming cancellation. 

And this finally leads us to our dilation result. 
If a representation of a semigroup $P$ extends to a completely contractive representation of $\A_\lambda(P)$, then we may use the Dritschel-McCullough dilation machine to dilate it to a representation of $C^\star_{env}(P)$. 
Then, if the criterion of \cite{Se22, BBD26}, are met, we may dilate it to a representation of $\partial C^\star_\lambda(P)$, and thus we have our main theorem: 
\begin{thm*}[Theorem \ref{thm: representations with property (A) extend to the boundary}]
    Let $P$ be a group-embeddable or right LCM cancellative semigroup. 
    Then, a representation of $P$ dilates to a representation of $\partial C^\star_\lambda(P)$ if and only if it extends to a completely contractive representation of $\A_\lambda(P)$.
\end{thm*}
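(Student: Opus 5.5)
The plan is to prove the two implications separately. Both directions hinge on the identification, due to Sehnem \cite{Se22} in the group-embeddable case and to Brix--Bruce--Dor-On \cite{BBD26} in the right LCM cancellative case, of $C^\star_{env}(\A_\lambda(P))$ with $\partial C^\star_\lambda(P)$. The map $\A_\lambda(P) \to \partial C^\star_\lambda(P)$ realising this identification sends $\lambda_p$ to the image $\partial\lambda_p$ of $\lambda_p$ under the canonical quotient map $C^\star_\lambda(P) \to \partial C^\star_\lambda(P)$, and it is a completely isometric homomorphism. The notion of dilation is the one from the introduction: a representation $V \colon P \to B(\cH)$ dilates to $\pi \colon \partial C^\star_\lambda(P) \to B(\K)$, with $\cH \subseteq \K$, if
\[
V_p = \P_\cH \, \pi(\partial\lambda_p)\vert_\cH \quad \text{for all } p \in P .
\]

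\emph{Dilation implies complete contractivity.} Suppose $V$ dilates to $\pi$ as above. Let $\iota \colon \A_\lambda(P) \to \partial C^\star_\lambda(P)$ be the completely isometric embedding with $\iota(\lambda_p) = \partial\lambda_p$. The compression $\Phi(x) = \P_\cH \, \pi(\iota(x))\vert_\cH$ is completely contractive, since it is a composition of a completely isometric map, a $*$-homomorphism and a compression. On each $\lambda_p$ it agrees with $V_p$. The remaining point is that $\Phi$ is multiplicative on $\A_\lambda(P)$. For this I would note that the dilation relation holds for all $p$, so $V_{pq} = V_p V_q$ forces
\[
\P_\cH \, \pi(\partial\lambda_p)\pi(\partial\lambda_q)\vert_\cH = \P_\cH \, \pi(\partial\lambda_p)\vert_\cH \, \P_\cH \, \pi(\partial\lambda_q)\vert_\cH .
\]
Hence $\Phi$ is multiplicative on monomials, and so by linearity and continuity on the closed algebra they generate. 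It follows that $\Phi$ is a completely contractive representation of $\A_\lambda(P)$ extending $V$. If one prefers a cleaner route, the version of Sarason's lemma for operator algebras shows that $\cH$ is semi-invariant for $\pi(\iota(\A_\lambda(P)))$, which gives the same conclusion.

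\emph{Complete contractivity implies dilation.} Let $\rho \colon \A_\lambda(P) \to B(\cH)$ be a completely contractive representation with $\rho(\lambda_p) = V_p$. If $\A_\lambda(P)$ is unital I proceed directly. Otherwise I pass to the unitisation, which is where Meyer's result \cite{Me01} enters: completely contractive representations extend unitally to the unitisation, and the $C^\star$-envelope is compatible with this passage. By the Dritschel--McCullough theorem \cite{DM05}, $\rho$ has a maximal dilation $\sigma \colon \A_\lambda(P) \to B(\K)$, with
\[
\rho(x) = \P_\cH \, \sigma(x)\vert_\cH ,
\]
and $\sigma$ extends to a $*$-representation $\tilde\sigma$ of $C^\star_{env}(\A_\lambda(P))$. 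Composing with the isomorphism $C^\star_{env}(\A_\lambda(P)) \cong \partial C^\star_\lambda(P)$ from \cite{Se22,BBD26} gives a representation $\pi$ of $\partial C^\star_\lambda(P)$ with $\pi(\partial\lambda_p) = \sigma(\lambda_p)$, and therefore $V_p = \P_\cH \, \pi(\partial\lambda_p)\vert_\cH$.

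The step I expect to need the most care is this identification. The isomorphism of \cite{Se22,BBD26} must be the canonical one, carrying the envelope's copy of $\lambda_p$ to $\partial\lambda_p$; otherwise the resulting $\pi$ dilates $V$ only up to an automorphism. A second delicate point is the non-unital bookkeeping: if $P$ lacks an identity, I must check that the maximal dilation of the unitised representation restricts correctly. Both points reduce to the naturality statements in the cited results, so I would state them explicitly before assembling the argument.
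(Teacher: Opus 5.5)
Your proposal is correct and follows the paper's proof. For complete contractivity $\Rightarrow$ dilation, you take the Dritschel--McCullough maximal dilation, use the unique extension property to get a representation of $C^\star_{env}(\A_\lambda(P))$, and transport it through the generator-preserving isomorphism with $\partial C^\star_\lambda(P)$ from \cite{Se22, BBD26}; for the converse, you compress a $\star$-representation of the boundary quotient composed with the canonical completely contractive map from $\A_\lambda(P)$. Your explicit check that this compression is multiplicative (via $V_{pq}=V_pV_q$ on the dense span of the $V_p$) supplies a step the paper leaves implicit, while the unitisation detour is unnecessary, since $P$ is a monoid and $\A_\lambda(P)$ is therefore unital.
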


This immediately allows us to answer question $B$ posed above. 
By Theorem \ref{thm: boundary quotients for right-Ore semigroups}, we have that for a semigroup $P$ with enveloping group $G$, $\partial C^\star_\lambda(P)$ is isomorphic to $C^\star_\lambda(G)$ if and only if $P$ is right Ore. 
Given that group representations are unitary, this allows us to answer Question B:
\newline\textbf{Answer B: } If $P$ is Ore, then every contractive representation that extends to a completely contractive representation of $\A_\lambda(P)$, has a unitary dilation. 
\newline
Sufficient conditions for representations to dilate to $\mathcal{Q}(P)$ in the right LCM setting were obtained by M. Laca and B. Li in \cite{LL22} through constructive methods, but their conditions are very strict, and are not satisfied by many representations, including the classical ones in \cite{An63, Ho92}. 
But the conditions we require for the Dritschel-McCoullough dilation machinery to operate for the dilation to $\partial C^\star_\lambda(P)$ are more general than theirs, and not just sufficient but also necessary. 
This allows one to reconcile with some existing operator-theoretic dilation results like the ones mentioned above. 
The only drawback is that it might be difficult to check complete contractivity in specific cases, and may still need heavy operator-theoretic machinery. 
But we show that this condition is equivalent to the matricial von Neumann inequality in the case of $\bN^k$, and hence is not expected to require techniques considerably more difficult than existing ones, and is in fact more generally applicable. 

This relation between matricial norm properties of a representation of a semigroup and the existence of a dilation to its boundary quotient might seem unexpected, and makes sense only through reflection into the nature of the Dritschel-McCullough dilation machine, and the equivalence of the $C^\star$-envelope and the boundary quotient. 
Meanwhile, one may note that though the semigroup and its boundary quotient are generated by the same elements, the latter might have many more relations, simply due to the fact that the former does not have inverses in general while the latter has adjoints which must respect the ideal structure of the former. 
These are precisely the foundation set relations. 
This leads us to the Corollary \ref{cor: property A guarantees that there exists a dilation with boundary quotient relations}, that every representation that extends to a completely contractive representation of $\A_\lambda(P)$, also admits a dilation with these additional relations among the generators. 

The structure of the paper is the following:
in section 2, we provide dilation-theoretic preliminaries, and introduce the notions of envelopes and boundaries. 
In section 3, we provide background on semigroups, while 4 and 5 are background on semigroup $C^\star$-algebras and boundary quotients respectively. 
Finally, in section 6, we demonstrate the dilation phenomenon.

\section*{Acknowledgements}

We wish to thank Chris Bruce, Adam Dor-On, and Xin Li 
for various insightful comments.  
We also wish to thank Joachim Zacharias, Boyu Li, Mike Whittaker, Charles Starling, and Runlian Xia for helpful comments on the draft.

\section{Preliminaries on Dilation Theory, Envelopes, and Boundaries}

The general goal of dilation theory is to view an operator (or a map into a space of operators) as a part of something simpler on a larger space. 
Its origins can be traced to operator dilation theory stemming from B. Sz{\H{o}}kefalvi-Nagy's celebrated dilation theorem \cite{SN53}. 
In this paper, we shall be dealing principally with dilations of representations. 
For us, an \textit{operator algebra} $\A$ will be a norm-closed subalgebra of $\B(\cH)$, not necessarily self-adjoint. 
Furthermore, we shall assume our operator algebras to be unital, and our representations to be unital as well. 
We refer the reader to K. R. Davidson's recent textbook \cite{Da25} for a detailed account on some of the topics we briefly mention in this section, and O. M. Shalit's notes \cite{Sh21} for a detailed introduction to dilation theory. 
For a general introduction to completely bounded maps we refer to V. I. Paulsen's celebrated textbook \cite{Pa02}. 

\begin{defn}\label{defn: C*-cover, C*-envelope}
    Given an operator algebra $\A$, we call $(\cC, \ci)$ a \textit{$C^\star$-cover} of $\A$, if $\ci : \A \to \cC $ is a completely isometric representation with $\cC = C^\star (\ci(\A))$.
    The \textit{$C^\star$-envelope} of $\A$ is a $C^\star$-cover $(C^\star_{env}(\A), \ci)$ that has the following co-universal property: if $(\cC', \ci')$ is another $C^\star$-cover of $\A$, then there exists a (necessarily unique) $\star$-epimorphism $\Phi: \cC' \to C^\star_{env}(\A)$ such that $\Phi(\ci'(a)) = \ci(a)$ for all $a \in \A$. 
\end{defn}

We should mention that there is a high amount of non-uniformity in terminology regarding the terms introduced in the rest of this section. 
What we refer to as ``maximal'' dilations, are occasionally called ``extremal'', or $\partial$-representations. 
We refer the reader to \cite{DM05, Ar03, Ar08, Da25} for further reading. 

\begin{defn}\label{defn: dilation, maximal dilation}
    Let $\A$ be an operator algebra. 
    Given a representation  $\phi: \A \to \B(\cH)$, another representation $\phi ': \A \to \B(\cH ')$ is called a \textit{dilation} of $\phi$ if $\cH \subseteq \cH '$, and $\phi(a) = \P_\cH \phi '(a) \vert_\cH$ for all $ a \in \A$. 
    A completely contractive representation $\phi$ is called \textit{maximal} if every dilation is trivial, that is, for every dilation $\phi': \A \to \B(\cH')$, $\P_\cH \phi'(a) = \phi'(a) \vert_\cH = \phi(a)$ for all $a \in \A$ (which is equivalent to saying that one may decompose $\cH' = \cH \oplus \cH''$ such that $\phi' = \phi \oplus \phi''$ for some representation $\phi''$). 
\end{defn}

\begin{thm}[Theorem 1.1, \cite{DM05}]\label{thm: Unique Extension Property}
    Let $\A$ be an operator algebra. 
    Then, the following are equivalent:
    \begin{enumerate}
        \item[(i)] $\phi: \A \to \B(\cH)$ is a maximal representation. 
        \item[(ii)] For every $C^\star$-cover $(\mathcal{C}, \iota)$ of $\A$, there exists a representation $\psi: \mathcal{C} \to \B(\cH)$ satisfying $\psi \circ \iota = \phi$, and the only completely positive map agreeing with $\psi$ when restricted to $\iota(\A)$ is $\psi$ itself. 
    \end{enumerate}
\end{thm}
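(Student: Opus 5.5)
We prove the two implications separately. Fix a maximal representation $\phi:\A\to\B(\cH)$ and a $C^\star$-cover $(\cC,\iota)$.

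\emph{(i) $\Rightarrow$ (ii).} By Arveson's extension theorem, the unital completely contractive map $\phi\circ\iota^{-1}$ on the operator system $\iota(\A)+\iota(\A)^\star\subseteq\cC$ extends to a unital completely positive map $\Psi:\cC\to\B(\cH)$. Let $(\pi,\K,V)$ be a minimal Stinespring dilation of $\Psi$, with $\cH\subseteq\K$ and $\Psi(c)=\P_\cH\pi(c)\vert_\cH$. Then $\pi\circ\iota$ is a representation of $\A$ dilating $\phi$. Maximality forces $\cH$ to reduce $\pi(\iota(\A))$, i.e.
\[
\pi(\iota(a))\vert_\cH=\phi(a) \quad\text{and}\quad \pi(\iota(a))^\star\vert_\cH=\phi(a)^\star \qquad (a\in\A).
\]
Hence $\cH$ is invariant for $\pi$ of the $\star$-algebra generated by $\iota(\A)$, which is dense in $\cC$. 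So $\cH$ reduces $\pi(\cC)$, and minimality gives $\K=\cH$. Thus $\Psi=\psi$ is a $\star$-representation with $\psi\circ\iota=\phi$.

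The same argument gives uniqueness. Any other completely positive $\Psi'$ agreeing with $\psi$ on $\iota(\A)$ is unital, since the algebras are unital. Its Stinespring dilation again dilates $\phi$, so the argument above shows $\Psi'$ is multiplicative and equal to a representation extending $\phi$. A representation is determined by its values on a generating set, so $\Psi'=\psi$.

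\emph{(ii) $\Rightarrow$ (i).} Let $\phi':\A\to\B(\cH')$ be a completely contractive dilation of $\phi$. The first step is to realise $\phi'$ on a common cover. Take $\cC=C^\star_{env}(\A)$ or any universal cover. By the Dritschel--McCullough theorem, $\phi'$ has a maximal dilation $\rho$. By the implication already proved, $\rho$ extends to a $\star$-representation $\tilde\rho$ of $\cC$. Compressing gives
\[
c\mapsto \P_\cH\tilde\rho(c)\vert_\cH ,
\]
a completely positive map on $\cC$ that agrees with $\psi$ on $\iota(\A)$. By hypothesis (ii) it equals $\psi$, so it is multiplicative.

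The standard Schwarz/multiplicative-domain argument then applies. From $\P_\cH\tilde\rho(c^\star c)\vert_\cH=\psi(c)^\star\psi(c)$ we get
\[
\norm{(I-\P_\cH)\tilde\rho(c)h}=0 \qquad (h\in\cH),
\]
so $\cH$ is invariant, hence reducing, for $\tilde\rho(\cC)$. Consequently $\rho(a)$ leaves $\cH$ invariant with $\rho(a)^\star\cH\subseteq\cH$. Since $\cH\subseteq\cH'\subseteq$ the space of $\rho$, and $\phi'$ is a compression of $\rho$, it follows that $\phi'(a)\vert_\cH=\phi(a)$ and $\P_\cH\phi'(a)=\phi(a)\P_\cH$. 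So the dilation $\phi'$ is trivial, and $\phi$ is maximal.

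\emph{Main obstacle.} The difficulty lies in this converse direction: relating an arbitrary dilation $\phi'$ to the fixed cover $\cC$. That is why we invoke existence of maximal dilations and pass through (i) $\Rightarrow$ (ii) for $\rho$.
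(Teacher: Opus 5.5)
The paper gives no proof of this statement; it is quoted from \cite{DM05}, so the comparison is with the standard argument there, which goes back to Muhly--Solel and Arveson. Your proof is correct.

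Your (i)$\Rightarrow$(ii) is exactly that standard argument:
\begin{itemize}
\item extend $\phi\circ\iota^{-1}$ to a unital completely positive map on $\cC$ by Arveson's theorem. Strictly, this needs the version for unital complete contractions on the unital operator space $\iota(\A)$, or a first passage to $\iota(\A)+\iota(\A)^\star$ via $\iota(a)+\iota(b)^\star\mapsto\phi(a)+\phi(b)^\star$;
\item take a minimal Stinespring dilation;
\item use maximality plus minimality to force $\K=\cH$.
\end{itemize}
Uniqueness is handled the same way.

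You differ in (ii)$\Rightarrow$(i). There you obtain a $\star$-representation of $\cC$ dilating $\phi'$ by taking a maximal dilation $\rho$ of $\phi'$ via Theorem \ref{thm: Dritschel-McCoulough theorem} and applying the first implication to $\rho$. This is legitimate provided existence of maximal dilations is available independently, which is how the paper treats it. However, the ``main obstacle'' you identify is not really there. The rest of your argument uses only that $\tilde\rho$ is a $\star$-representation of $\cC$ on a space containing $\cH'$ with $\P_{\cH'}\tilde\rho(\iota(a))\vert_{\cH'}=\phi'(a)$. It never uses that $\rho$ is maximal. Such a representation comes from the same tools as your first half: extend $\phi'\circ\iota^{-1}$ to a unital completely positive map on $\cC$ by Arveson's theorem and take its Stinespring dilation $\sigma$. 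Your compression and multiplicative-domain argument then applies to $\sigma$ verbatim.

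The standard route makes the equivalence rest only on Arveson's extension theorem and Stinespring's theorem, independently of the existence of maximal dilations. Yours buys a tidy reuse of the first implication, at the cost of depending on the considerably deeper existence theorem. You should also state explicitly that (ii) makes $\phi=\psi\circ\iota$ completely contractive, since the paper's definition of maximality requires this.
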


The second condition in Theorem \ref{thm: Unique Extension Property} is called the unique extension property. 

\begin{thm}[Parts of theorems 1.2 and 4.1, \cite{DM05}]\label{thm: Dritschel-McCoulough theorem}
    Let $\A$ be an operator algebra, and $\phi: \A \to \B(\cH)$ a completely contractive representation. 
    Then, $\phi$ has a maximal dilation. 
    Moreover, if $\phi$ is completely isometric, and $\phi'$ is its maximal dilation, then $C^\star(\phi'(\A)) \cong C^\star_{env}(\A)$. 
\end{thm}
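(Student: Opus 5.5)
The plan is to prove the two assertions of the Dritschel--McCullough theorem (Theorem \ref{thm: Dritschel-McCoulough theorem}) separately. For the first, I would build the maximal dilation by transfinite iteration; for the second, I would combine Theorem \ref{thm: Unique Extension Property} with the co-universal property in Definition \ref{defn: C*-cover, C*-envelope}.

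\textbf{Existence of a maximal dilation.} Start from $\phi:\A\to\B(\cH)$, and fix a $C^\star$-cover $(\cC,\ci)$, for instance $\A\subseteq\B(\cH_0)$ itself. The key device is the dilation order: say $\phi\le\phi'$ when $\phi'$ is a dilation of $\phi$ with $\phi'(a)\vert_\cH=\phi(a)$ for all $a\in\A$, that is, $\cH$ is invariant for $\phi'(\A)$. One first shows that every completely contractive representation has such a co-extension-type dilation, in which $\cH$ is invariant. Then one obtains the other kind of dilation by passing to adjoints, using that $\cH$ is co-invariant for some dilation.

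The iteration runs as follows.
\begin{enumerate}
\item[(1)] Given $\phi$, fix $a\in\A$ and $h\in\cH$. Take the supremum of $\norm{\phi'(a)h}$ over all dilations $\phi'$ in which $\cH$ is invariant. To keep the sizes bounded, restrict to dilations on spaces of cardinality at most $\max(\abs{\A},\abs{\cH},\aleph_0)$, which is possible by cyclic reduction. Choose a dilation that nearly attains this supremum.
\item[(2)] Do this for all pairs $(a,h)$ and take the direct limit along the chain of inclusions. The result is a dilation $\phi_1$ such that, for every $h\in\cH$ and $a\in\A$, the vector $\phi_1(a)h$ is \emph{extremal}: no further dilation of $\phi_1$ moves it out of the space.
\item[(3)] Iterate countably many times, alternating with the adjoint version (extremality of $\phi(a)^*h$). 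The inductive limit $\phi_\infty$ is then a completely contractive representation, being a limit of completely contractive ones. In $\phi_\infty$, every vector $h$ satisfies both extremality conditions.
\item[(4)] Show that a representation satisfying both extremality conditions at every vector is maximal. Given any dilation $\phi'$, the compression formula $\phi(a)=\P_\cH\phi'(a)\vert_\cH$ together with the norm equality $\norm{\phi'(a)h}=\norm{\phi(a)h}$ forces $\phi'(a)h\in\cH$. Likewise, $\phi'(a)^*h\in\cH$, so $\cH$ is reducing.
\end{enumerate}
I expect step (4), and making extremality survive the limits in (2)--(3), to be the main obstacle. In particular, the inductive limit must keep the norms of $\phi(a)h$ monotone and bounded by $\norm{a}$, which is where complete contractivity is used.

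\textbf{The $C^\star$-envelope.} Now suppose $\phi$ is completely isometric and $\phi'$ is a maximal dilation of it. Then $\phi'$ is completely isometric as well, since $\norm{\phi(a)}\le\norm{\phi'(a)}\le\norm{a}$ at each matrix level. Hence $(C^\star(\phi'(\A)),\phi')$ is a $C^\star$-cover, and we must show it is co-universal.

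Let $(\cC',\ci')$ be any $C^\star$-cover. By Theorem \ref{thm: Unique Extension Property}, $\phi'$ extends to a $\star$-representation $\psi:\cC'\to\B(\cH')$ with $\psi\circ\ci'=\phi'$, and this $\psi$ has the unique extension property. Its image is $C^\star(\phi'(\A))$, so $\psi$ is a $\star$-epimorphism $\cC'\to C^\star(\phi'(\A))$ intertwining the two embeddings. This is exactly the defining property of $C^\star_{env}(\A)$.

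Finally, uniqueness up to isomorphism of co-universal covers gives $C^\star(\phi'(\A))\cong C^\star_{env}(\A)$. It also yields the existence of the envelope as a by-product.
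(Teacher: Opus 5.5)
The paper gives no proof of this statement; it is quoted from \cite{DM05}, so your argument has to stand on its own. Your second half is correct. A maximal dilation of a completely isometric $\phi$ is completely isometric. For every cover $(\cC',\ci')$, Theorem \ref{thm: Unique Extension Property} gives a $\star$-epimorphism $\psi:\cC'\to C^\star(\phi'(\A))$ with $\psi\circ\ci'=\phi'$, and co-universality then identifies $C^\star(\phi'(\A))$ with $C^\star_{env}(\A)$. The architecture of your first half is also the right one: maximize norms, pass to inductive limits, and use that $\|\psi(a)h\|=\|\phi(a)h\|$ together with $\P_\cH\psi(a)\vert_\cH=\phi(a)$ forces $\psi(a)h\in\cH$. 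But step (1), as written, does nothing. You maximize $\|\phi'(a)h\|$ over dilations in which $\cH$ is invariant. For those, $\phi'(a)h=\phi(a)h$ by your own definition, so the supremum is just $\|\phi(a)h\|$ and every dilation attains it. Symmetrically, $\|\phi'(a)^*h\|$ is constant over dilations with $\cH$ co-invariant. So the extremality you invoke in step (4), against an \emph{arbitrary} dilation, has not been produced.

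The simplest repair is to take both suprema, of $\|\psi(a)h\|$ and of $\|\psi(a)^*h\|$, over \emph{all} completely contractive dilations $\psi$. Both quantities are bounded by $\|a\|\|h\|$. Both are nondecreasing under further dilation, because compression is contractive and $\P_\cH\psi(a)^*\vert_\cH=\phi(a)^*$. So they can be handled together in one transfinite sweep, with no alternation or passage to adjoints.

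If you prefer one-sided classes, pair them the other way round. Pair $\|\psi(a)h\|$ with dilations where $\cH$ is co-invariant, and $\|\psi(a)^*h\|$ with those where $\cH$ is invariant. Then use Sarason's lemma: an arbitrary dilation lives on $\K_-\oplus\cH\oplus\K_+$ with $\K_-$ and $\K_-\oplus\cH$ invariant. Restricting to $\K_-\oplus\cH$, respectively compressing to $\cH\oplus\K_+$, gives a one-sided dilation with the same value.

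You should also justify three things you currently only assert.
\begin{enumerate}
\item[(a)] The inductive limit of a chain of dilations, defined through the compatible compressions, is multiplicative and completely contractive. Multiplicativity follows from $\P_{\cH_\alpha}\to 1$ strongly.
\item[(b)] Near-attainment becomes exact only after the countable iteration. Along the chain, $\|\phi_n(a)h\|$ increases while the supremum over dilations of $\phi_n$ decreases, so a single pass in (2) does not yet give exact extremality, even for $h\in\cH$.
\item[(c)] Extremality passes from $\bigcup_n\cH_n$ to its closure via the uniform bound $\|\psi(a)\|\le\|a\|$.
\end{enumerate}
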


This theorem also proved that the $C^\star$-envelope exists for every operator algebra. 
We are yet to properly define what a boundary means in this context. The basic examples of $C^\star$-envelopes, as well as motivation for boundaries, arise in the context of uniform algebras. 

\begin{ex}\label{ex: C*-envelope of the disk algebra}
    Recall that we observed in the introduction, for the disk algebra $\A(\D)$, we have $C^\star_{env}(\A(\D)) \cong C(\T)$, the quotient map mapping every bounded holomorphic function to a function on the unit circle obtained by restricting its domain to the latter, and the maximum modulus ensuring that this is an isometry. 
\end{ex}

The result in Example \ref{ex: C*-envelope of the disk algebra} is not an isolated one. 
The $C^\star$-envelope of any uniform algebra is the $C^\star$-algebra of continuous functions on its {\v S}ilov boundary. 
The unconditional existence of the $C^\star$-envelope (Theorem \ref{thm: Dritschel-McCoulough theorem}) provides a noncommutative analogue of this result. 

\begin{defn}\label{defn: Shilov ideal}
    Let $\A$ be an operator algebra. 
    An ideal $\cI \lhd C^\star(\A)$ is called a \textit{boundary ideal} if the quotient map $q_\cI:C^\star(\A) \to C^\star(\A) / \cI$ restricts to a complete isometry on $\A \subseteq C^\star(\A)$. 
    The \textit{{\v S}ilov ideal} $\cI_s$ of $\A$ is defined to to be the boundary ideal that contains all boundary ideals of $\A$. 
\end{defn}

The existence of the {\v S}ilov ideal can be seen from the existence of $C^\star$-envelope: $C^\star_{env}(\A)$ is canonically isomorphic to $C^\star(\A)/\cI_s$.

\section{Preliminaries on Semigroups}

A semigroup is a set with an associative binary operation, traditionally referred to as a multiplication. 
In this paper that by semigroup we mean a semigroup with an identity element, that is, a monoid. 
For example, we shall in this spirit assume $(\mathbb{N},+)$ to mean the additive natural numbers with $0$. 
All semigroups under consideration in this paper will be discrete and countable. 
We begin with the following essential property: 
\begin{defn}\label{defn:cancellative}
    A semigroup $P$ is said to be \textit{left cancellative} if for any $p,q,x \in P$, $xp = xq \implies p = q$. 
    Correspondingly, we say $P$ is \textit{right cancellative} if for any $p,q,x \in P$, $px = qx \implies p = q$. 
    We say $P$ is \textit{cancellative} if it is left and right cancellative. 
\end{defn}

We denote the set of invertible elements (also called units) of $P$ by $P^\star$. 
Every group is (trivially) a semigroup, and it is interesting to consider which conditions on a semigroup would ensure that it is (faithfully) embeddable in a group. 
A necessary but not sufficient condition for this is the cancellative property. 
A. Mal'cev gave an infinite list of conditions which are necessary and sufficient for group-embeddability of a semigroup, and showed that no finite list of those conditions suffices. 
We refer the reader to Section 12 of \cite{CP67} for further details. 
We mention one particular situation in which $P$ is group-embeddable, which will be useful later. 
We denote $pP := \setof{px \; \vert \; x \in P}$ and define $Pp$ in a similar fashion. 
\begin{defn}\label{defn: Ore}
    We say a semigroup $P$ is \textit{right-reversible} if for all $p, q \in P$, $Pp \cap Pq \neq \emptyset$; and 
    \textit{left-reversible} if for all $p, q \in P$, $pP \cap qP \neq \emptyset$. 
%
    $P$ is called \textit{left Ore} if it is cancellative and right-reversible; and 
    \textit{right Ore} if it is cancellative and left-reversible. 
\end{defn}

\begin{thm}\label{thm: Ore embedding}[Ore, Dubreil]
    A semigroup $P$ embeds into a group $G$ of the form $G = P^{-1} P$ if and only if it is left-Ore. 
    Similarly, $P$ embeds into a group $G$ of the form $G = P P^{-1}$ if and only if it is right-Ore. 
\end{thm}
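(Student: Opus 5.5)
We prove the first statement (left Ore, $G = P^{-1}P$) and then deduce the second by passing to the opposite semigroup $P^{op}$, which has $x \cdot_{op} y := yx$. Under this passage left and right cancellativity are interchanged, right-reversibility becomes left-reversibility, and $P^{-1}P$ becomes $PP^{-1}$.

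\emph{Necessity.} Suppose $P \subseteq G$ with $G = P^{-1}P$. Then $P$ is cancellative, since cancellation holds in the group $G$. For right-reversibility, take $p,q \in P$. The element $qp^{-1} \in G$ can be written as $a^{-1}b$ with $a,b \in P$. Then $aq = bp$, so $Pq \cap Pp \neq \emptyset$.

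\emph{Sufficiency.} Assume $P$ is cancellative and right-reversible. We build $G$ as a group of left fractions. On $P \times P$, where $(a,b)$ is meant to represent $a^{-1}b$, we declare $(a,b) \sim (c,d)$ if there exist $x,y \in P$ with $xa = yc$ and $xb = yd$.

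\textbf{Step 1: $\sim$ is an equivalence relation.}
\begin{itemize}
\item Reflexivity and symmetry are clear.
\item For transitivity, suppose $(a,b)\sim(c,d)$ via $x,y$ and $(c,d)\sim(e,f)$ via $u,v$. By right-reversibility, choose $s,t$ with $sy c = t u c$. Right cancellation of $c$ gives $sy = tu$. Then
\[
sxa = syc = tuc = tve \quad\text{and}\quad sxb = syd = tud = tvf ,
\]
so $(a,b)\sim(e,f)$ via $sx$ and $tv$.
\end{itemize}

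\textbf{Step 2: multiplication.} Given classes $[a,b]$ and $[c,d]$, use right-reversibility to pick $x,y$ with $xb = yc$, and set
\[
[a,b][c,d] := [xa, yd].
\]
This mimics $a^{-1}b\,c^{-1}d = a^{-1}x^{-1}\,y\,d$.

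\textbf{Step 3: the group axioms.}
\begin{itemize}
\item The identity is $[1,1]$.
\item The inverse of $[a,b]$ is $[b,a]$: take $x=y=1$ to get $[b,b]$, and $[b,b]\sim[1,1]$ via $x=1$, $y=b$.
\item Associativity follows by choosing common left multiples step by step, again using right-reversibility and cancellation.
\end{itemize}

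\textbf{Step 4: embedding and $G = P^{-1}P$.} Define $\iota(p) = [1,p]$.
\begin{itemize}
\item $\iota$ is a homomorphism: in $[1,p][1,q]$ we may take $x = q$, $y=1$ (so $xp = yq$ needs adjusting). More simply, choose $x=1$, $y$ with $yp\cdot$ appropriately; the general multiplication rule gives $[x, yq]$ with $x p = y$, which is equivalent to $[1,pq]$.
\item $\iota$ is injective: if $[1,p]=[1,q]$, then $x = y$ and $xp = xq$, so $p = q$ by left cancellation.
\item Finally, $[a,b] = \iota(a)^{-1}\iota(b)$, so $G = P^{-1}P$.
\end{itemize}

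\textbf{Main obstacle.} The hardest step is well-definedness of the multiplication in Step 2. The product must be independent of the choice of $x,y$ with $xb = yc$, and also of the representatives of both classes.

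To handle the choice of $x,y$, take two choices $x b = y c$ and $x' b = y' c$. Use right-reversibility to find $s,t$ with $sx = tx'$. Right cancellation of $c$ then gives $sy = ty'$, and so $(xa, yd) \sim (x'a, y'd)$ via $s,t$. Changing representatives is handled the same way: compose the witnessing multipliers, and use right-reversibility to find a common left multiple.

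This is the routine but lengthy core of the argument. I would organise it as a single lemma: \emph{any two ``common left multiple'' data are dominated by a third.} All the well-definedness checks, and associativity in Step 3, then follow from this lemma.
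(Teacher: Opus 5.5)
Your argument is the classical group-of-left-fractions construction: pairs $(a,b)$ standing for $a^{-1}b$, with the right-Ore case obtained by passing to $P^{op}$. The paper gives no proof of its own and simply cites this classical result (Clifford--Preston, Theorem 1.24, and Laca), so your route is essentially the standard one, and it is correct in outline. The associativity and change-of-representative checks you only sketch do go through as you indicate. Two small slips should be tidied. In Step 4, the condition for $[1,p][1,q]$ is $xp=y$, so $x=1$, $y=p$ gives $[1,pq]$ immediately. For the inverse, $x=y=1$ gives $[a,b][b,a]=[a,a]$; it is $[b,a][a,b]$ that equals $[b,b]$, and both are $\sim[1,1]$.
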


We direct the reader to Theorem 1.24 in \cite{CP61} or Section 1.1 of \cite{La00} for further details on Theorem \ref{thm: Ore embedding}. 
Not all group-embeddable semigroups are Ore: 
for example, see \cite{Pa02a} for the case of Artin monoids. 
It is also a pertinent question as to which group we choose to embed $P$ into, if there are multiple options. 
Here, we may carry out a universal construction following \cite{CP67}. 
If a semigroup $P$ is group-embeddable, there exists a universal group embedding $\iota_{univ} : P \hookrightarrow G_{univ}(P)$ such that for any other group embedding $\iota : P \hookrightarrow G$, we have a unique homomorphism $\Phi:G_{univ}(P) \to G$ such that $\iota = \Phi \circ \iota_{univ}$, that is, the following diagram commutes:
\begin{center}
    \begin{tikzcd}
P \arrow[r, hook, "\iota_{univ}"] \arrow[rd, hook, "\iota"] & G_{univ}(P) \arrow[d, "\Phi"] \\
& G
\end{tikzcd}
\end{center}
Hence, 
it will suffice for us to just state that that $P$ embeds into some group $G$ without specifying which group it is. 

In the group-embeddable setting, A. Nica first defined quasi-lattice ordered semigroups in \cite{Ni92}, and studied their isometric covariant representations. 
The following generalisation, which was developed in \cite{BRRW14}, removed the group-embeddability assumption and the reference to the order. 
\begin{defn}\label{defn: right LCM}
    A left-cancellative semigroup $P$ is said to be right LCM if for all $p,q \in P$, either $pP \cap qP = rP$ for some $r \in P$, or $pP \cap qP = \emptyset$. 
\end{defn}
This $r$ may possibly be non-unique, but one observes directly that if $r$ and $r'$ both satisfy $pP \cap qP = rP = r'P$, then there exists $u \in P^\star$ such that $r' = r u$. 
We denote the set of right LCM elements of $p,q$ by $p \vee q = \setof{r \in P \st  pP \cap qP = rP}$. 
For a finite subset $F \subset P$, we denote $\vee F := \setof{r \in P \st \bigcap_{p \in F} pP = rP}$. 
These semigroups have been referred to and studied under various names in the past: for examples, M. Lawson considers the dual definition in \cite{La99} as CRM monoids (after Clifford, Reilly, and McAlister), and M. D. Norling refers to them as semigroups satisfying Clifford's condition in \cite{No14}. 

We clarify a further bit of notation for the subsequent considerations. 
For all $p \in P$ and $X \subseteq P$, we denote $p^{-1}X := \setof{x \in P \; \vert \; px \in X}$. 
Further, if $P$ embeds in $G$, $p^{-1}X = \setof{p^{-1}x \; \vert \; x \in G} \cap P$. 
For $p_1,q_1, \dots , p_n, q_n \in P$, the set $p_1^{-1}q_1p_2^{-1}q_2 \dots p_n^{-1}q_nP$ does not depend on the actual product of these elements, but is equal to $p_1^{-1}(q_1(p_2^{-1}(q_2( \dots (p_n^{-1}(q_nP))))))
$, which is equal to 
$p_1^{-1}q_1P \cap p_1^{-1}q_1p_2^{-1}q_2P \cap \dots \cap p_1^{-1}q_1p_2^{-1}q_2 \dots p_n^{-1}q_nP$. 
Finally, we conclude this section with some examples:
\begin{ex}
    We look at the following classes of examples:
    \begin{enumerate}[(i)]    
        \item For any $k \in \bN$, the free abelian semigroup on $k$ generators, 
        ${\bN^k} = {\mathbb{Z}^k}^+ = \left\langle a_1, \dots a_k \; \vert \; a_i a_j = a_j a_i \;\forall \; i,j\right\rangle^+$ is Ore, 
        and embeds into $\Z^k$. 
        It is right LCM, with the unique LCM given by $(m_1, \dots, m_k) \vee (n_1, \dots, n_k) = (\max\{m_1, n_1\}, \dots, \max\{m_k, n_k \})$ for $m_i,n_i \in \bN$. 
        (We shall put a $+$-sign in the superscript to distinguish the semigroup and group representations from now on.)
        
        \item For any $k \in \bN$, the free semigroup on $k$ generators $\bF_k^+ = \left\langle a_1, \dots a_k \right\rangle^+$ is not 
        Ore, as it is 
        not 
        reversible.
        However, 
        $\bF_k^+$ canonically embeds into the free group on $k$ generators, $\bF_k = \left\langle a_1, \dots a_k \right\rangle$, by mapping the generators to their counterparts.
        It is also right LCM. 
        For two semigroup elements $p$ and $q$, if there exists an $r \in \mathbb{F}_k^+$ such that $p = qr$ (resp. $q = pr$), then $p \vee q = p$, (resp. $p \vee q = q$) otherwise $p \vee q = \emptyset$.
        
        
        \item Artin monoids are an important case. 
        Denote by $\langle s,t \rangle_m$ the word where $s$ and $t$ alternate $m$ times, that is, $\langle s,t \rangle_4 = stst$. 
        Consider a symmetric matrix $M=(m_{i,j})_{i,j}$ for $1\leq i,j \leq n$ such that $m_{i,i}= 1$ for all $i$, and $m_{i,j} \in \setof{2,3,\dots,+\infty}$ for $i\neq j$. 
        Then, define $A_M^+ := \left\langle e_1, \dots, e_n \; \vert \; \langle e_i, e_j\rangle_{m_{i,j}} = \langle e_j, e_i\rangle_{m_{j,i}}\right\rangle^+$ to be the Artin monoid associated with $M$, and $A_M := \left\langle e_1, \dots, e_n \; \vert \; \langle e_i, e_j\rangle_{m_{i,j}} = \langle e_j, e_i\rangle_{m_{j,i}}\right\rangle$ to be the Artin group associated with $M$, with the convention that $m_{i,j}=\infty$ implies that there is no relation between $e_i$ and $e_j$. 
        $A_M^+$ embeds into $A_M$ \cite{Pa02a}, but is not always Ore. 
        $A_M^+$ is always right LCM. 
%
If $m_{i,j} \in \setof{2, +\infty}$ for $i \neq j$, we call $A_M^+$ (resp. $A_M$) a right-angled Artin monoid (resp. right-angled Artin group). 
        Right-angled Artin groups are very closely related to Coxeter groups $C_M$, generated by the same relations as $A_M$ along with the additional relation that $e_i^2 = e$. 
        If $C_M$ is finite, then we say that $A_M^+$ is of finite type (or spherical type). 
        Their $C^\star$-algebras have been discussed in detail in \cite{CL02, CL07}. 
        It was proved in \cite{De72} that $A_M^+$ is Ore if and only if it is of finite type. 
        Note that if $A_M^+$ has $k$ generators with $m_{i,j} = 2$ for all $i \neq j$ then we get $\bN^k$, while for $m_{i,j} = \infty$ we get $\bF_k^+$. 
        
    \end{enumerate}
\end{ex}

\section{Semigroup $C^\star$-algebras and Amenability}

In recent years, there has been significant interest in the study of $C^\star$-algebras and operator algebras of semigroups \cite{CELY17}. 
The representation theory of semigroups is much more nuanced than their counterpart for groups. 
While it is a perfectly good idea to define the reduced semigroup $C^\star$-algebra of a semigroup $P$ as the norm-closed subalgebra of $\B(\ell_2(P))$ generated by the (isometric) left-regular representation and closed under involution, unfortunately, considering the universal algebra generated by these isometries yields something that is far too large. 
In fact, it was proved by G. J. Murphy \cite{Mur96} that such a $C^\star$-algebra for even $\bN^2$, the universal $C^\star$-algebra generated by two commuting isometries, is non-nuclear! 
The problem was addressed by X. Li in \cite{Li12a} by insisting that the representation keep track of the ideal structure of the semigroup as well, and this led to proper notions of universal $C^\star$-algebras of semigroups. 
X. Li also studied the notion of amenability in the context of semigroups in the same paper. 

\subsection{Semigroup $C^\star$-algebras}

Consider a discrete countable left cancellative semigroup $P$, and its left-regular representation $V: P \to \B(\ell_2(P))$, $V_p(\delta_q) = \delta_{pq}$. 
It is straightforward to verify that $V$ is an isometric representation (i.e., $V_p$ is an isometry for every $p \in P$). 
This allows us to define the following reduced semigroup algebras. 
\begin{defn}
    We define the \textit{reduced semigroup $C^\star$-algebra} of $P$ to be the $C^\star$-algebra generated by its left-regular representation, i.e. $C^\star_\lambda(P) := C^\star(V_p \; \vert \; p \in P) \subseteq \B(\ell_2(P))$. 
    Similarly, we define the \textit{reduced semigroup operator algebra} of $P$ to be the (possibly non-selfadjoint) operator algebra generated by the left-regular representation, i.e. $\A_\lambda(P) := \overline{\textrm{alg}}\left\{V_p \; \vert \; p \in P\right\} \subseteq \B(\ell_2(P))$. 
\end{defn}
Some authors also refer to $C^\star_\lambda(P)$ as the Toeplitz algebra of the semigroup, $\mathcal{T}_\lambda(P)$. 
%
A proper 
candidate for a universal semigroup $C^\star$-algebra was proposed by X. Li in \cite{Li12a}, using the structure of the space of right ideals of the semigroup.
We say a subset $X \subseteq P$ is a right (resp. left) ideal of $P$ if $Xp \subseteq X$ (resp. $pX \subseteq X$) for all $p \in P$. 
Groups lack non-trivial ideals, but for semigroups such spaces can be interesting. 
We further restrict ourselves to the space of \textit{constructible right ideals}, which is defined in the following fashion:

\begin{defn}
    We define the set of \textit{constructible right ideals} $\J$ of $P$ to be the smallest family of right ideals of $P$ satisfying the following properties:
    \begin{enumerate}
        \item[(I1)] $P \in \J$ and $\emptyset \in \J$. 
        \item[(I2)] For all $p \in P$ and $X \in \J$, $pX \in \J$ and $p^{-1}X \in \J$. 
        \item[(I3)] For all $X,Y \in \J$, $X \cap Y \in \J$. 
    \end{enumerate}
\end{defn}

One might observe here that the set $\J$ is actually a semilattice, i.e. an abelian semigroup of idempotents with the operation being intersection of ideals. 
There is a duality between semilattices and 
totally disconnected locally compact Hausdorff spaces. 
We refer the reader to 
\cite{CELY17} for further details on this duality. 
It is proved in Lemma 3.3 of \cite{Li12a} that (I1) and (I2) imply (I3), that is, one may state:
\begin{align}\label{eq: form of Ideals}
    \J = \left\{ q_1^{-1} p_1 \dots q_n^{-1} p_n P \; \vert \; n \in \mathbb{N}, \textrm{ and }p_i,q_i \in P \textrm{ for all } i\in\{1, \dots n\} \right\} \cup \{\emptyset\}.
\end{align}
We further need to define the following independence condition on $\J$, from \cite{Li12a}:
\begin{defn}\label{defn: independence condition}
    If for all $n \in \mathbb{N}$ and all collections $\{X_i\}_{i=1}^n$ of ideals in $\J$, $X = \cup_{i=1}^n X_i \in \J$ implies $X=X_i$ for some $i$, then we say that $\J$ is \textit{independent}. 
\end{defn}

    If $P$ is right LCM (as in Definition \ref{defn: right LCM}), then 
    all constructible right ideals are \textit{principal}, that is, $\J = \{ pP \; \vert \; p \in P \}$. 
    The independence condition on $\J$ is thus automatically satisfied if $P$ is right LCM. 
%
In \cite{Li12a}, X. Li formulated the following two universal $C^\star$-algebras, Definitions \ref{defn: full semigroup C*-algebra} and \ref{defn: constructible semigroup C*-algebra}, which capture the ideal structure of the semigroup:

\begin{defn}\label{defn: full semigroup C*-algebra}
    The \textit{full semigroup $C^\star$-algebra} $C^\star(P)$ of a semigroup $P$ is the universal $C^\star$-algebra constructed by the set of isometries $\{v_p \; \vert \; p \in P\}$ and the set of projections $e_X \; \vert \; X \in \J\}$ satisfying the following relations, for all $p,q \in P$ and all $X,Y \in \J$:
    \begin{enumerate}
        \item[(F1i)] $v_p v_q = v_{pq}$, 
        \item[(F1ii)] $v_p e_X v_p^\star = e_{pX}$,
        \item[(F2i)] $e_P = 1$,
        \item[(F2ii)] $e_\emptyset = 0$, and 
        \item[(F2iii)] $e_X e_Y = e_{X \cap Y}$. 
    \end{enumerate}
\end{defn}

By Lemma 2.8 of \cite{Li12a}, 
these definitions imply 
that $v_p^\star e_X v_p = e_{p^{-1}X}$, and thus 
\begin{align}\label{eq: e-v relations}
    e_{q_1^{-1} p_1 \dots q_n^{-1} p_n P} = v_{q_1}^\star v_{p_1} \dots v_{q_n}^\star v_{p_n} v_{p_n}^\star v_{q_n} \dots v_{p_1}^\star v_{q_1}
\end{align}

By equation \ref{eq: form of Ideals}, this proves that $C^\star(P)$ is generated by the isometries $\{v_p \; \vert \; p \in P\}$. 
The canonical surjective $\star$-homomorphism, the left-regular representation $\lambda:C^\star(P) \to C^\star_\lambda(P)$, can thus be affected by mapping $v_p$ to $V_p$ for every $p \in P$. 
We define the diagonal subalgebra $$D(P) := C^\star(e_X \; \vert \; X \in \J) \subseteq C^\star(P),$$
that is, the universal $C^\star$-algebra generated by the projections corresponding to elements in $\J$. 
By (F2iii), one may observe that this is a commutative $C^\star$-algebra. 
For every $X \in \J$, one has the projection $E_X \in C^\star_\lambda(P)$ (on the subspace $\ell_2(X) \subseteq \ell_2(P)$) which may be obtained from equation \ref{eq: e-v relations} by applying the map $\lambda$. 
This allows one to define $$D_\lambda(P) := C^\star(E_X \; \vert \; X \in \J) = \lambda(D(P)) \subseteq \B(\ell_2(P)).$$

X. Li further defines the following universal $C^\star$-algebra when $P$ is group-embeddable, which is instrumental in most considerations:
\begin{defn}\label{defn: constructible semigroup C*-algebra}
     Let $P$ be a semigroup which embeds into a group $G$. 
     The \textit{constructible semigroup $C^\star$-algebra} $C^\star_s(P)$ of a semigroup $P$ is the universal $C^\star$-algebra constructed by the set of isometries $\{v_p \; \vert \; p \in P\}$ and the set of projections $\{e_X \; \vert \; X \in \J\}$ satisfying the following relations, for all $p,q \in P$ and all $X,Y \in \J$:
     \begin{enumerate}
         \item[(S1)] $v_p v_q = v_{pq}$, 
         \item[(S2)] $e_\emptyset = 0$, 
         \item[(S3)] for all $p_1, q_1, \dots p_n, q_n \in P$ satisfying $p_1^{-1} q_1 \dots p_n^{-1} q_n = e$ in $G$, 
         $$v_{p_1}^\star v_{q_1} \dots v_{p_n}^\star v_{q_n} = e_{q_n^{-1} p_n \dots q_1^{-1} p_1 P}. $$
     \end{enumerate}
\end{defn}

It can be shown that conditions (F1ii), (F2i), (F2iii) can be deduced to hold from (S3), and hence there exists a canonical surjective $\star$-homorphism $\pi_s:C^\star(P) \to C^\star_s(P)$ that maps $v_p\in C^\star(P)$ to $v_p\in C^\star_s(P)$ and $e_X\in C^\star(P)$ to $e_X\in C^\star_s(P)$. 
$C^\star_s(P)$ is also generated by $\{v_p \; \vert \; p \in P\}$, and one has a a canonical $\star$-homomorphism, the left-regular representation, $\lambda_s:C^\star_s(P) \to C^\star_\lambda(P)$, which maps $v_p$ to $V_p$. 
Furthermore, one defines $$D_s(P) := C^\star(e_X \; \vert \; X \in \J) \subseteq C^\star_s(P),$$
and $\pi_s\vert_{D(P)}:D(P) \to D_s{(P)}$ is a surjective $\star$-homomorphism. 

That is, one has the following commutative diagrams:

\begin{center}
    \begin{tikzcd}
C^\star(P) \arrow[rd, twoheadrightarrow, "\lambda"] \arrow[d, twoheadrightarrow, "\pi_s"] \\
C^\star_s(P) \arrow[r, twoheadrightarrow, "\lambda_s"] & C^\star_\lambda(P)
\end{tikzcd}
and 
\begin{tikzcd}
D(P) \arrow[rd, twoheadrightarrow, "\lambda\vert_{D(p)}"] \arrow[d, twoheadrightarrow, "\pi_s\vert_{D(p)}"] \\
D_s(P) \arrow[r, twoheadrightarrow, "\lambda_s\vert_{D_s(p)}"] & D_\lambda(P)
\end{tikzcd}
\end{center}

\begin{cor}[Corollary 3.3 of \cite{KKLL22}]
    The following are equivalent:
    \begin{enumerate}
        \item[(i)] $\J$ is independent. 
        \item[(ii)] $\lambda\vert_{D(P)}$ is an isomorphism. 
        \item[(iii)] $\lambda\vert_{D_s(P)}$ is an isomorphism. 
    \end{enumerate}
\end{cor}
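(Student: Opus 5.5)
The plan is to prove (i)$\Rightarrow$(ii)$\Rightarrow$(iii)$\Rightarrow$(i). Both diagonal algebras are commutative $C^\star$-algebras generated by commuting projections indexed by the semilattice $\J$, so everything reduces to finite-dimensional subalgebras and their minimal projections. For a finite family $X_1,\dots,X_n\in\J$, enlarged to be closed under intersections by (F2iii) and (I3), the $C^\star$-algebra generated by $e_{X_1},\dots,e_{X_n}$ is finite-dimensional. It is spanned by products
$$e_X\prod_{i}(1-e_{X_i}), \qquad X_i\subsetneq X,$$
where we may replace $X_i$ by $X\cap X_i$. A $\star$-homomorphism out of a finite-dimensional commutative $C^\star$-algebra is injective, hence isometric, as soon as it does not kill any nonzero minimal projection. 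Since $D(P)$ is the closure of the increasing union of such subalgebras, injectivity of $\lambda\vert_{D(P)}$, and likewise of $\lambda_s\vert_{D_s(P)}$, reduces to the following test: for all $X\supsetneq X_1,\dots,X_n$ in $\J$ with $e_X\prod_i(1-e_{X_i})\neq 0$, the image $E_X\prod_i(1-E_{X_i})$ is nonzero.

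\emph{(i)$\Rightarrow$(ii).} The operator $E_X\prod_i(1-E_{X_i})$ is the projection onto $\ell_2\bigl(X\setminus\bigcup_i X_i\bigr)$. If $\J$ is independent and every $X_i$ is a proper subset of $X$, then $X\neq\bigcup_i X_i$. Choosing $x\in X\setminus\bigcup_iX_i$, the vector $\delta_x$ is fixed by this projection, so the projection is nonzero. Hence $\lambda$ is isometric on a dense $\star$-subalgebra of $D(P)$, and therefore is an isomorphism onto $D_\lambda(P)$.

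\emph{(ii)$\Rightarrow$(iii).} On $D(P)$ we have $\lambda=\lambda_s\circ\pi_s$, and $\pi_s\vert_{D(P)}$ is surjective onto $D_s(P)$. If $\lambda\vert_{D(P)}$ is injective, then $\pi_s\vert_{D(P)}$ is injective, hence an isomorphism. It follows that $\lambda_s\vert_{D_s(P)}=\lambda\vert_{D(P)}\circ(\pi_s\vert_{D(P)})^{-1}$ is an isomorphism.

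\emph{(iii)$\Rightarrow$(i).} This is the step I expect to be the real work: one must show that the universal diagonal does not collapse the minimal projections. Suppose $X=\bigcup_i X_i$ with all $X_i\subsetneq X$ in $\J$. Then $E_X\prod_i(1-E_{X_i})=0$, so it suffices to show that $e_X\prod_i(1-e_{X_i})\neq0$ in $D_s(P)$.

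For this I would build an explicit representation on $\ell_2(\J\setminus\{\emptyset\})$ by
$$v_p\delta_Z=\delta_{pZ},\qquad e_Y\delta_Z=[Z\subseteq Y]\,\delta_Z,$$
where $[Z\subseteq Y]$ is $1$ if $Z\subseteq Y$ and $0$ otherwise.
\begin{itemize}
\item By left cancellation, $p^{-1}(pZ)=Z$, so $v_p$ is an isometry.
\item The relations (S1) and (S2) are immediate, and so are (F2i)--(F2iii).
\item For (F1ii), note that $W\subseteq pX$ forces $W=p(p^{-1}W)$ with $p^{-1}W\in\J$.
\end{itemize}
For (S3), the word $v_{p_1}^\star v_{q_1}\cdots v_{p_n}^\star v_{q_n}$ acts as the partial map $Z\mapsto p_1^{-1}q_1\cdots p_n^{-1}q_nZ$. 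When the group element is $e$, this map is the identity wherever it is defined. Its domain is exactly the set of those $Z$ contained in $q_n^{-1}p_n\cdots q_1^{-1}p_1P$, so the word equals $e_{q_n^{-1}p_n\cdots q_1^{-1}p_1P}$, as (S3) requires. I would check this last point carefully by induction on $n$.

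In this representation, $e_X\prod_i(1-e_{X_i})\delta_X=\delta_X\neq0$, because $X\not\subseteq X_i$ for each $i$. So the product is nonzero in $C^\star_s(P)$ while its image under $\lambda_s$ vanishes, contradicting (iii). Hence $\J$ is independent.
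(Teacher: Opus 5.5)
Your proof is correct. The paper itself gives no proof of this statement; it is quoted from \cite{KKLL22}, so there is no in-paper argument to compare against. Your argument is essentially the standard one going back to X.~Li. You test injectivity on the minimal projections $e_X\prod_i(1-e_{X_i})$, whose images are the projections onto $\ell_2(X\setminus\bigcup_i X_i)$. You then use the representation on $\ell_2(\J\setminus\{\emptyset\})$ to show these projections are nonzero in the universal algebra.

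The step you flagged does hold. Unwinding the iterated preimages, an element $z\in P$ survives every partial step of $v_{p_1}^\star v_{q_1}\cdots v_{p_n}^\star v_{q_n}$ exactly when $z\in q_n^{-1}p_n\cdots q_1^{-1}p_1P$. A set $Z\in\J$ survives precisely when all its elements do. On such $Z$ the word sends $\delta_Z$ to $\delta_{gZ}$ with $g=p_1^{-1}q_1\cdots p_n^{-1}q_n=e$, so (S3) is satisfied.

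Your representation in fact shows more than you use. For $X\neq\emptyset$ and any finitely many $Y_1,\dots,Y_m\subsetneq X$ in $\J$, the vector $\delta_X$ witnesses that $e_X\prod_j(1-e_{Y_j})\neq 0$ in $D_s(P)$. Hence $\pi_s\vert_{D(P)}$ is always an isomorphism, and (ii)$\Leftrightarrow$(iii) holds irrespective of (i).

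Finally, you were right to read (iii) as concerning $\lambda_s\vert_{D_s(P)}$ rather than the paper's literal $\lambda\vert_{D_s(P)}$. That statement only makes sense when $P$ is group-embeddable, which you correctly assumed.
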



One should be careful that unlike the reduced case, the term ``universal Toeplitz $C^\star$-algebra'' $\mathcal{T}_u(P)$ means something different from the universal $C^\star$-algebras defined here. 
M. Laca and C. Sehnem defined and studied these universal algebras. 
$\mathcal{T}_u(P)$ is in general a quotient of $C^\star_s(P)$, and they are isomorphic if and only if $\J$ satisfies the independence condition. 
We refer the reader to \cite{LS22} for details. 

\begin{rem}
    It is not completely clear when the map $\pi_s:C^\star(P) \to C^\star_s(P)$ will be an isomorphism. 
    It can be shown to be true when $P$ is quasi-lattice ordered, or when $P$ is Ore. 
    However, as we shall see in the next subsection, in the amenable case, if $\J$ is independent, $C^\star(P) \cong C^\star_s(P) \cong C^\star_\lambda(P)$. 
    Moreover, these $C^\star$-algebras are nuclear. 
\end{rem}

\subsection{Amenability of Semigroups}

We briefly look at how amenability of the semigroup manifests itself in the relation between its semigroup $C^\star$-algebras. 
We state the definition used by X. Li in \cite{Li12a}:
\begin{defn}\label{defn: left-amenable semigroup}
    Let $P$ be a discrete semigroup. 
    We say $P$ is left-amenable if there exists a left-invariant mean on $\ell_\infty(P)$, that is, a linear functional $\mu\in (\ell_\infty(P))^\star$ such that $\mu(f(p\sqcup)) = \mu(f)$ for all $f\in\ell_\infty(P)$ and all $p \in P$, where $f(p\sqcup)$ refers to the composition of $f$ with the left-multiplication by $p$. 
\end{defn}

It is also possible to define right-amenability in a similar fashion. 
Amenability of semigroups is in general a broad and interesting topic: for example, X. Li showed in \cite{Li12a} that one may obtain for semigroups conditions analogous to (1)-(7), Theorem 6.8, \cite{BO08}, as stated for groups. 
However, we restrict ourselves to only two of these conditions for the moment, and refer the interested reader to Section 4 of \cite{Li12a}, or to A. Paterson's celebrated monograph \cite{Pa88} for further details. 
\begin{thm}[Part of Statements 4.1, \cite{Li12a}]
    Let $P$ be a group-embeddable semigroup, consider the following two conditions:
    \begin{enumerate}
        \item[(i)] $P$ is left-amenable.
        \item[(ii)] The left-regular representation $\lambda:C^\star_s(P) \to C^\star_\lambda(P)$ is an isomorphism and there exists a non-zero character on $C^\star_s(P)$. 
    \end{enumerate}
    It is always true that $(i)\implies (ii)$. 
    In addition, if $\J$ is independent, then $(ii) \implies (i)$ as well. 
\end{thm}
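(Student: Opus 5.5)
The plan is to prove the two implications separately. For $(i)\implies(ii)$ the tools are a Følner-type approximation and a comultiplication trick; for $(ii)\implies(i)$ we build the invariant mean directly from the character.

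\emph{$(i)\implies(ii)$.} First, from a left-invariant mean on $\ell_\infty(P)$ we extract, by the usual Day/Namioka convexity argument, a net of finitely supported probability vectors, and hence unit vectors $\xi_j\in\ell_2(P)$, such that $\norm{V_p\xi_j-\xi_j}\to 0$ for every $p\in P$. This is a Følner-type condition; left cancellativity ensures that the $V_p$ act as isometries permuting point masses injectively. We then define $\chi(T):=\lim_{\omega}\inp T\xi_j,\xi_j$ along an ultrafilter $\omega$ for $T\in C^\star_\lambda(P)$. Since $V_p\xi_j\approx\xi_j$, we also have $V_p^\star\xi_j\approx\xi_j$. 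Consequently every word in the $V_p,V_p^\star$ acts approximately as the identity on $\xi_j$, so $\chi$ is multiplicative on a dense $\star$-subalgebra, hence is a character. It satisfies $\chi(V_p)=1$, and composing with $\lambda_s$ gives a non-zero character on $C^\star_s(P)$.

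For injectivity of $\lambda_s$, we would use a coproduct. By the universal property of $C^\star_s(P)$ there is a $\star$-homomorphism
$$\delta:C^\star_s(P)\to C^\star_s(P)\otimes_{\min} C^\star_\lambda(P),\qquad v_p\mapsto v_p\otimes V_p,\quad e_X\mapsto e_X\otimes E_X.$$
Relations (S1), (S2) are clear. For (S3), a word $v_{p_1}^\star v_{q_1}\cdots$ with trivial image in $G$ is sent to $e_X\otimes E_X$, because the same relation holds in $C^\star_\lambda(P)$. Since $(\mathrm{id}\otimes\chi)\circ\delta=\mathrm{id}$, we get $\norm{a}\le\norm{\delta(a)}$. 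It then remains to show $\norm{\delta(a)}\le\norm{\lambda_s(a)}$. Take a faithful representation $\rho$ of $C^\star_s(P)$ on $\cH$. The aim is to show that $\rho(v_p)\otimes V_p$ on $\cH\otimes\ell_2(P)$ is unitarily equivalent to a restriction of $1\otimes V_p$ (a multiple of the left regular representation), in the spirit of Fell absorption. The plan is to pass to $\ell_2(G)$ via the embedding $P\hookrightarrow G$ and use the $G$-grading. Every word in the generators of $C^\star_s(P)$ has a well-defined degree in $G$ by (S3), so the unitary $W(\eta\otimes\delta_x)$ can be defined degreewise by the words $\rho(w)\eta\otimes\delta_{gx}$.

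This Fell-absorption step is where I expect the main obstacle. The $\rho(v_p)$ are only isometries, not unitaries, so one has to check that the degreewise map is well defined and isometric on the spans of such vectors using exactly relation (S3). The fallback is to prove instead that the canonical conditional expectation onto $D_s(P)$, coming from the $G$-grading, is faithful on $C^\star_s(P)$ when $P$ is amenable. One would then compare it with the diagonal expectation on $C^\star_\lambda(P)$, again using the Følner vectors.

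\emph{$(ii)\implies(i)$, assuming $\J$ independent.} Because $\lambda_s$ is an isomorphism, the character transports to a character $\chi$ on $C^\star_\lambda(P)$. Extend it by Hahn--Banach/Arveson to a state $\varphi$ on $\B(\ell_2(P))$, and set $\mu(f):=\varphi(M_f)$ for $f\in\ell_\infty(P)$, where $M_f$ is the multiplication operator. Note that $V_p^\star M_fV_p=M_{f(p\sqcup)}$. Since $\chi(V_p^\star V_p)=1=\abs{\chi(V_p)}^2$, the operator $V_p$ lies in the multiplicative domain of $\varphi$. Hence $\mu(f(p\sqcup))=\overline{\chi(V_p)}\chi(V_p)\varphi(M_f)=\mu(f)$, so $\mu$ is a left-invariant mean.

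Independence is used to guarantee that the character is non-degenerate in the right sense. Via the isomorphism $D_s(P)\cong D_\lambda(P)$ of the Corollary from \cite{KKLL22}, $\chi$ is determined on $D_\lambda(P)$ by $\chi(E_X)=1$ for $X\ne\emptyset$ and $\chi(E_\emptyset)=0$, which is consistent with $\varphi$ being positive on $\ell_\infty(P)$ and $\mu(1)=1$.
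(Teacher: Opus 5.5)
The paper gives no proof here; the result is quoted from \cite{Li12a}. So your argument has to stand on its own, and it has a genuine gap exactly where you expected it: the injectivity of $\lambda_s$ in $(i)\implies(ii)$.

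In fact this gap cannot be closed. As transcribed, the statement attaches the independence hypothesis to the wrong implication, and $(i)\implies(ii)$ is false without it. Take $P=\mathbb{N}\setminus\setof{1}\subset\Z$, which is abelian and hence left-amenable.
\begin{itemize}
\item The constructible ideal $X=\setof{x\in P \st x+2\in 3+P}=\setof{3,4,5,\dots}$ equals $(3+P)\cup(4+P)$.
\item Hence $a=e_X-e_{3+P}-e_{4+P}+e_{(3+P)\cap(4+P)}$ satisfies $\lambda_s(a)=0$.
\item Your coproduct $\delta$, which does exist, shows $a\neq 0$. For a faithful representation $\rho$ of $C^\star_s(P)$ on $\cH$ and $\eta\in\cH$, one gets $\delta(a)(\eta\otimes\delta_3)=\rho(e_X-e_{3+P})\eta\otimes\delta_3$.
\item This is nonzero for suitable $\eta$, because $e_X-e_{3+P}\neq 0$: $\lambda_s$ maps it to the projection onto $\C\delta_4$.
\end{itemize}
So $\lambda_s$ is not injective, in line with the Corollary of \cite{KKLL22} quoted in the paper, and the inequality $\norm{\delta(a)}\le\norm{\lambda_s(a)}$ you wanted fails. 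The same computation shows why no Fell-absorption unitary can exist: the operators $\rho(e_Y)\otimes E_Y$ do not satisfy the union relation $E_X=E_{3+P}\vee E_{4+P}$, which holds in $D_\lambda(P)$.

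Your fallback breaks at the same point. The grading by $G=PP^{-1}$, which is amenable, does give a faithful expectation $E_s\colon C^\star_s(P)\to D_s(P)$ with $\lambda_s\circ E_s=E_\lambda\circ\lambda_s$. But to pass from $\lambda_s(E_s(a^\star a))=0$ to $E_s(a^\star a)=0$ you need $\lambda_s$ to be injective on $D_s(P)$, and that is precisely independence.

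If you add independence as a hypothesis, your fallback becomes a complete proof of $(i)\implies(ii)$. Your F\o lner-vector construction of the character is fine as it stands.

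Your multiplicative-domain argument for $(ii)\implies(i)$ is correct and uses no independence at all: any character on $C^\star_\lambda(P)$ yields a left-invariant mean. Your closing paragraph on independence can therefore be dropped.

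What is actually true, and what Li proves, is the reverse placement of the hypothesis: $(ii)\implies(i)$ always holds, and $(i)\implies(ii)$ holds when $\J$ is independent.
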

One further has, for the full semigroup $C^\star$-algebra:
\begin{thm}[Part of Theorem 5.6.42, \cite{CELY17}]\label{thm: lambda isomorphism from cancellative}
    Let $P$ be a cancellative semigroup, and $\J$ be independent. 
    The following are equivalent:
    \begin{enumerate}
        \item[(i)] $P$ is left-amenable.
        \item[(ii)] The left-regular representation $\lambda:C^\star(P) \to C^\star_\lambda(P)$ is an isomorphism and there exists a non-zero character on $C^\star(P)$. 
    \end{enumerate}
\end{thm}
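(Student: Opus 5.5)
The plan is to prove the two implications separately. The easy direction is (ii)$\Rightarrow$(i), where a character yields a mean. The substantive direction is (i)$\Rightarrow$(ii), where independence of $\J$ and cancellation are needed to show that $\lambda$ is faithful.

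\emph{(ii)$\Rightarrow$(i).} Let $\chi$ be a non-zero character on $C^\star(P)$. Since $v_p^\star v_p = 1$, we get $\abs{\chi(v_p)} = 1$ for all $p$. Because $\lambda$ is an isomorphism, $\chi\circ\lambda^{-1}$ is a state on $C^\star_\lambda(P) \subseteq \B(\ell_2(P))$.
\begin{enumerate}
    \item[(1)] Extend $\chi\circ\lambda^{-1}$ by Hahn--Banach/Arveson to a state $\omega$ on $\B(\ell_2(P))$.
    \item[(2)] Each $V_p$ lies in the multiplicative domain of $\omega$, since $\omega(V_p^\star V_p) = 1 = \abs{\omega(V_p)}^2$.
    \item[(3)] For a multiplication operator $M_f$ with $f \in \ell_\infty(P)$, we have $V_p^\star M_f V_p = M_{f(p\sqcup)}$, using left cancellation.
    \item[(4)] Hence $\omega(M_{f(p\sqcup)}) = \overline{\chi(v_p)}\,\omega(M_f)\,\chi(v_p) = \omega(M_f)$.
\end{enumerate}
So $\mu(f) := \omega(M_f)$ is a left-invariant mean on $\ell_\infty(P)$, and $P$ is left-amenable.

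\emph{(i)$\Rightarrow$(ii), the character.} Let $\mu$ be a left-invariant mean. From $\mu(1_X(p\sqcup)) = \mu(1_X)$ and $1_{p^{-1}X} = 1_X(p\sqcup)$, we get $\mu(1_{p^{-1}X}) = \mu(1_X)$. Also $1_{pP}(p\sqcup) \equiv 1$, so $\mu(1_{pP}) = 1$. Any two sets of $\mu$-measure one intersect, so this gives left reversibility. By induction along the description \eqref{eq: form of Ideals}, every non-empty $X\in\J$ has $\mu(1_X)=1$.

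I would then define $\chi(v_p) = 1$, and set $\chi(e_X) = 1$ if $X \neq \emptyset$ and $\chi(e_X)=0$ otherwise. The relations to check are as follows.
\begin{enumerate}
    \item[$\bullet$] (F1i), (F2i) and (F2ii) are immediate from the definition.
    \item[$\bullet$] (F1ii) holds since $pX \neq \emptyset$ if and only if $X \neq \emptyset$.
    \item[$\bullet$] (F2iii) holds since two non-empty constructible ideals both have measure one and therefore meet.
\end{enumerate}
So $\chi$ is a well-defined non-zero character by universality.

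\emph{(i)$\Rightarrow$(ii), injectivity of $\lambda$; this is the main obstacle.} The route I would try first is weak containment. The unit vectors $\abs{F}^{-1/2}1_F$, for F\o lner sets $F$, show that the character $\chi$ above is weakly contained in $\lambda$; the F\o lner condition is available from left-amenability plus left cancellation, by Namioka's argument. Given any representation $\pi$ of $C^\star(P)$, we have $\pi \cong \pi\otimes\chi \prec \pi\otimes\lambda$, where the tensor product is formed by $v_p \mapsto \pi(v_p)\otimes V_p$ and $e_X \mapsto \pi(e_X)\otimes E_X$.

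The crux is a Fell-absorption statement: $\pi\otimes\lambda$ is weakly contained in (a multiple of) $\lambda$. For this I would use:
\begin{enumerate}
    \item[$\bullet$] independence of $\J$, which makes $\lambda\vert_{D(P)}$ faithful, so that the diagonal parts of $\pi\otimes\lambda$ and $\lambda$ match;
    \item[$\bullet$] a faithful conditional expectation $C^\star_\lambda(P)\to D_\lambda(P)$, together with the F\o lner sets, to compare the norms of finite sums $\sum_i a_i$ of words in $v_p, v_q^\star$ in the two representations.
\end{enumerate}
If this direct comparison proves too delicate without group-embeddability, I would fall back on realising $C^\star(P)$ as the full $C^\star$-algebra of Paterson's groupoid or of the inverse semigroup of partial bijections. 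Independence identifies $C^\star(P)$ with the full groupoid algebra, and left-amenability of $P$ makes that groupoid amenable. Full and reduced groupoid $C^\star$-algebras then coincide, which is exactly the statement that $\lambda$ is an isomorphism.
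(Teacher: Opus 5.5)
The paper gives no proof of this statement; it is quoted from \cite{CELY17}. Judged on its own, your proof of (ii)$\Rightarrow$(i) is correct, via the multiplicative domain of an extended state. So are three pieces of (i)$\Rightarrow$(ii): the construction of the character $\chi$, the F\o lner argument for $\chi\prec\lambda$, and $\pi\prec\pi\otimes\lambda$. The gap is the injectivity of $\lambda$.

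Given $\pi\prec\pi\otimes\lambda$, your ``Fell absorption'' claim ($\pi\otimes\lambda\prec\lambda$ for every $\pi$) is \emph{equivalent} to the faithfulness of $\lambda$. So you have restated the goal, not reduced it. The ingredients you list cannot close it.
\begin{itemize}
\item Independence only controls $\lambda$ on $D(P)$.
\item The diagonal expectation lives on $C^\star_\lambda(P)$. What is needed is a \emph{faithful} expectation on the full algebra $C^\star(P)$, intertwined with it by $\lambda$, whose range you can identify.
\end{itemize}
Concretely, let $w=v_{p_1}^\star v_{q_1}\cdots v_{p_n}^\star v_{q_n}$ act on $\ell_2(P)$ as $E_X$. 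Relations (F1)--(F2) only give $w^\star w=e_X$. Unless you show $w=e_X$ in $C^\star(P)$, $\lambda$ cannot be injective, and independence says nothing about this. Your groupoid fallback hides the same issue in two unproved assertions: that independence identifies $C^\star(P)$ with the full groupoid algebra (it only identifies the unit space), and that the groupoid is amenable.

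The missing idea is right cancellation, which your argument never uses. You already showed that left amenability forces $pP\cap qP\neq\emptyset$. With cancellation this makes $P$ right Ore, so $P\hookrightarrow G=PP^{-1}$, and $G$ is amenable because $P$ is left-amenable. The injectivity argument then goes as follows.
\begin{enumerate}
\item Universality of $C^\star(P)$ gives a coaction $v_p\mapsto v_p\otimes u_p$, $e_X\mapsto e_X\otimes 1$ into $C^\star(P)\otimes C^\star(G)$.
\item Slicing with the canonical trace, which is faithful since $C^\star(G)=C^\star_\lambda(G)$, gives a faithful conditional expectation $\tilde E$. Its range is the closed span of words whose image in $G$ is trivial, and $\lambda\circ\tilde E=E_\lambda\circ\lambda$.
\item For Ore semigroups $C^\star(P)\cong C^\star_s(P)$, so relation (S3) holds and this range is exactly $D(P)$.
\item On $D(P)$, $\lambda$ is injective by independence.
\end{enumerate}
Now if $\lambda(a)=0$, then $\lambda(\tilde E(a^\star a))=0$, so $\tilde E(a^\star a)=0$, so $a=0$.
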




A. Nica developed the following notion of amenability for quasi-lattice ordered semigroups (generalisable to the right LCM case) in \cite{Ni92}: 
\begin{defn}\label{defn: Nica-amenable semigroup}
    A right LCM semigroup $P$ is said to be Nica-amenable if the left-regular representation $\lambda:C^\star(P) \to C^\star_\lambda(P)$ is an isomorphism. 
\end{defn}

\begin{rem}
    For right LCM semigroups, the independence condition of $\J$ always holds, and hence conditions (i) and (ii) of Theorem \ref{thm: lambda isomorphism from cancellative} are equivalent. 
    At this point, it is tempting to conclude that Nica-amenability as defined in Definition \ref{defn: Nica-amenable semigroup} is equivalent to the general notion of left-amenability as defined in Definition \ref{defn: left-amenable semigroup}. 
    However, that is not actually true, since even if $P$ is Nica-amenable, it might fail to have a non-zero character on the $C^\star$-algebra. 
    And this is indeed what happens for $\mathbb{F}_k^+$, as we shall see in the examples below. 
    So, for right LCM semigroups, left-amenability in the sense of X. Li is strictly stronger than Nica-amenability. 
    In the subsequent considerations, we shall use ``amenability'' to mean left-amenability in the sense of X. Li, unless explicitly mentioned otherwise. 
\end{rem}

We conclude this section with a couple of examples (and non-examples):
\begin{ex}
    We return to the examples listed at the end of Section 2:
    \begin{enumerate}
        \item[(i)] ${\mathbb{N}^k}$ is amenable
        . 
        In fact, all abelian semigroups are left (and right) amenable, both in the sense of X. Li and A. Nica. 

        \item[(ii)] 
        $\mathbb F_k^+$ is 
        Nica amenable, but not amenable in the sense of X. Li. 
        We refer the reader to \cite{Ni92} for a proof. 


        \item[(iii)] 
        Artin monoids 
        are Nica amenable if and only if they are right-angled \cite{CL02, LL20}. 

    \end{enumerate}
\end{ex}

\section{Boundary Quotients}

Early studies into boundary quotients of semigroups were carried out by J. Crisp and M. Laca \cite{CL07} for right-angled Artin monoids ($\bF_k^+$ and $\bN^k$ are both special cases of these), and they also investigated questions of pure infiniteness and simplicity. 
Their construction was generalised in two separate ways, which we briefly describe in this section. 
If $P$ embeds into a group $G$, there is a natural partial action of $G$ on the (abelian) reduced diagonal subalgebra $D_\lambda(P)$, and consequently its spectrum $\Omega_P$. 
$\Omega_P$ can be shown to have a unique smallest $G$-invariant subspace
, which we refer to as $\partial \Omega_P$. 
Then, the reduced crossed product $C^\star$-algebra $\partial C^\star_\lambda(P) := C(\partial\Omega_P) \rtimes_rG$ is defined to be the boundary quotient $C^\star$-algebra of the semigroup
. 
It satisfies certain desirable co-universal properties: 
it is the co-universal object in the category of $G$-equivariant representations of $C^\star_s(P)$, and is isomorphic to the $C^\star$-envelope of the co-system of $G$ acting on $\A_\lambda(P)$. 
We refer the reader to \cite{Se19, LS22, DKELL22, KKLL22} for details. 
For right LCM semigroups $P$, 
one also has a characterisation of the boundary quotient $\mathcal{Q}(P)$ via a maximality of relations picture.  
One defines foundation sets of $P$ as those which generate principal ideals that intersect any other principal ideal non-trivially. 
A vanishing condition on the products of complement projections for the ideals corresponding to the foundation set elements, generates an ideal of the universal $C^\star$-algebra, and quotienting with respect to this ideal yields the boundary quotient $C^\star$-algebra. We refer the reader to \cite{St15, BRRW14, BS16, LL22} for details. 
The connection between them has been briefly discussed in \cite{LS22}. 
The core of the idea is that closed $G$-invariant subsets of the Nica spectrum ($\Omega_P$) correspond to sets of elementary relations on $P$, and the unique smallest $G$-invariant subset $\partial\Omega_P$ corresponds to the maximal such set of elementary relations. 

\subsection{Boundary Quotients via Partial Crossed Products}
One way to define boundary quotients in the general setting is through partial actions, which are a generalisation of the notion of an action. 
The core of the idea is that instead of an action of a group by bijections of a set (or suitable isomorphisms, e.g. homeomorphisms in the topological case), we only ask for \textit{partial} bijections on a set, that is, maps which are bijections on suitably restricted domains, and compositions / inverses that are defined on suitably restricted domains as well. 
It is also possible to define reduced and universal (partial) crossed product $C^\star$-algebras from these actions, and one has the canonical surjective $\star$-homomorphism from the latter to the former. 
We refer the interested reader to \cite{Ex17, Li17b} for further background. 


Now, we consider the inverse semigroup for $P$, constructed in the following fashion using the left-regular representation:
$$\I_V = \left\{ V_{p_1}^\star V_{q_1} \dots V_{p_n}^\star V_{q_n} \;\vert\; \textrm{ for all } p_i,q_i \in P, \; 1\leq i \leq n, \; n \in \mathbb{N} \right\}.$$
It is straightforward to verify that it is indeed an inverse semigroup (recall that an inverse semigroup is a semigroup satisfying that for every $x$ there exists a unique $y$ with $xyx=x$) with $VV^\star V =V$ for all $V \in \I_V$. 

Assume that $P$ embeds into a group $G$. 
We denote $\I_V^\times = \I_V \backslash\{0\}$, and define the homomorphism $$\sigma:\I_V^\times \to G, \;\;\;\;\;\;\;\;\;\;V_{p_1}^\star V_{q_1} \dots V_{p_n}^\star V_{q_n} \mapsto p_1^{-1} q_1 \dots p_n^{-1} q_n.$$ 

It can be proved \cite{KKLL22} that $D_\lambda(P) = \overline{span}\{\sigma^{-1}(e)\}$. 
We further define 
$$D_{g^{-1}} := \overline{span}\{V^\star V \;\vert\; V \in \I_V^\times, \; \sigma(V) = g\} \subseteq D_\lambda(P).$$ 
It was shown in Section 3.3 \cite{Li17b}, that $D_{g^{-1}}$ is an ideal of $D_\lambda(P)$ for every $g \in G$. 
Noting that $\ell_2(P)$ is a subspace of $\ell_2(G)$, we may identify $C^\star_\lambda(P) \subseteq \B(\ell_2(P))$. 
We observe that for all $V \in \sigma^{-1}(g)$, denoting $\lambda_g$ as the left-regular representation of $G$ on $\ell_2(G)$, $V = \lambda_g V^\star V$, and hence it induces a $\star$-homomorphism 
$$\theta_g^\star: D_{g^{-1}} \to D_g, \;\;\;\;\;\;\;\;\;\; V^\star V \mapsto \lambda_g V^\star V \lambda_g^\star = VV^\star .$$
By construction, $D_\lambda(P)$ and $D_{g^{-1}}$ are abelian $C^\star$-algebras, and if we denote their spectra as $\Omega_P$ and $\Omega_{g^{-1}}$ respectively, we may identify $\Omega_{g^{-1}}$ as an open subset of $\Omega_P$, and $\theta_g^\star$ as described above induces $\theta_{g^{-1}}$ to be a partial action of $G$ on $\Omega_P$ 
by 
$$\theta_g^\star(\chi) = \chi \circ \theta_{g^{-1}}, \;\;\;\;\textrm{ for all } \chi \in D_{g^{-1}} \cong C_0(\Omega_{g^{-1}}).$$

\begin{prop}[Proposition 3.12, \cite{KKLL22}; Proposition 3.10, \cite{Li17b}]\label{prop: partial crossed product realisation of the reduced semigroup C* algebra}
    Let $P$ be a semigroup-embeddable in a group $G$. 
    There exists a canonical isomorphism 
    $$ C^\star_\lambda(P) \to D_\lambda(P) \rtimes_r G
    ,$$
    which maps $V_p$ to the image of $p \in G$ in $D_\lambda(P) \rtimes_r G$. 
\end{prop}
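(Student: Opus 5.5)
We need to prove that $C^\star_\lambda(P)\cong D_\lambda(P)\rtimes_r G$ canonically, with $V_p\mapsto$ the image of $p$. The plan is to exhibit $C^\star_\lambda(P)$ as a topologically graded $C^\star$-algebra over $G$ whose fibres are exactly the partial-action spectral subspaces, and then identify it with the reduced partial crossed product.

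First, set up the embedding. We identify $\ell_2(P)\subseteq \ell_2(G)$ and let $E=\P_{\ell_2(P)}$. Every $V\in \I_V^\times$ with $\sigma(V)=g$ satisfies $V=\lambda_g V^\star V$, as observed above. We then define a faithful conditional expectation $\Phi$ on $\B(\ell_2(P))$ by compression to the diagonal,
\[
\Phi(T)=\sum_{x\in P}\P_{\delta_x} T \P_{\delta_x}.
\]
Since each $V\in \I_V^\times$ is a partial isometry of the form $\delta_x\mapsto\delta_{gx}$ on its domain, $\Phi(V)=V$ if $\sigma(V)=e$ and $\Phi(V)=0$ otherwise. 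The case $\sigma(V)=e$ forces $V$ to be a diagonal projection, by the formula $V=\lambda_e V^\star V$. Hence $\Phi$ maps $C^\star_\lambda(P)=\overline{span}\,\I_V$ onto $D_\lambda(P)=\overline{span}\,\sigma^{-1}(e)$, and it is faithful on $C^\star_\lambda(P)$. We also get a grading $C^\star_\lambda(P)=\overline{\bigoplus_g B_g}$ with $B_g=\overline{span}\,\sigma^{-1}(g)$. Because $\sigma$ is a homomorphism, $B_gB_h\subseteq B_{gh}$ and $B_g^\star=B_{g^{-1}}$. The expectation $\Phi$ kills every $B_g$ with $g\neq e$, so the grading is topological in Exel's sense.

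Second, we identify each fibre: $B_g=\lambda_g D_{g^{-1}}=D_g\lambda_g$ (as operators on $\ell_2(G)$ compressed to $\ell_2(P)$). Every $V\in\sigma^{-1}(g)$ equals $\lambda_g V^\star V$ with $V^\star V\in D_{g^{-1}}$. Conversely, products $V W$ with $W\in D_\lambda(P)$ a generator projection stay in $\sigma^{-1}(g)$, using that $D_{g^{-1}}$ is an ideal of $D_\lambda(P)$ (Section 3.3 of \cite{Li17b}). Thus the map $a\delta_g\mapsto \lambda_g\theta_{g^{-1}}$-twisted element, concretely $a\delta_g\mapsto a\lambda_g$ for $a\in D_g$, is an isometric linear bijection from the crossed-product fibre $D_g\delta_g$ onto $B_g$. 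We check that it respects multiplication and involution: $(a\lambda_g)(b\lambda_h)=a\,\lambda_g b\lambda_g^\star\,\lambda_{gh}=a\theta_g(b)\lambda_{gh}$, which matches the partial crossed product rule, with $\theta_g$ given by conjugation by $\lambda_g$ as in the definition of $\theta_g^\star$. This yields a $\star$-homomorphism from the algebraic partial crossed product $D_\lambda(P)\rtimes_{alg}G$ onto a dense subalgebra of $C^\star_\lambda(P)$, sending $1_p\delta_p$ to $V_p$.

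Third, we pass to the completion. The hard point is showing that the induced map extends to an isomorphism from the \emph{reduced} crossed product. The plan is to use the standard characterisation of reduced cross-sectional algebras of Fell bundles (Exel, Proposition 19.8 of \cite{Ex17}): a topologically graded algebra whose conditional expectation onto the unit fibre is faithful is canonically isomorphic to the reduced cross-sectional algebra of its Fell bundle. Faithfulness of $\Phi$ on $C^\star_\lambda(P)$ holds because $\Phi$ is the diagonal compression on $\B(\ell_2(P))$, which is faithful on positive operators. The Fell bundle $\{B_g\}$ is isomorphic to the semidirect product bundle of the partial action $\theta$ by the second step. So $C^\star_\lambda(P)\cong C^\star_r(\{B_g\})\cong D_\lambda(P)\rtimes_r G$, with $V_p\mapsto 1_{pP}\delta_p$, the canonical image of $p$. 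The main care is in the second step: we need the equality $B_g=D_g\lambda_g$ exactly, not just inclusion, and this relies on the ideal property of $D_{g^{-1}}$ together with $\sigma$ being well defined on $\I_V^\times$.
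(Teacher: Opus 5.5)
Your proof is correct. The paper gives no argument of its own for this statement; it simply quotes it from \cite{KKLL22} and \cite{Li17b}. Your proposal is therefore best read as a self-contained replacement for that citation, and it is a natural one. The diagonal compression $\Phi$ is a faithful expectation onto $B_e=D_\lambda(P)$ that vanishes on every $B_g$ with $g\neq e$. So by Exel's characterisation \cite{Ex17}, $C^\star_\lambda(P)$ is the reduced cross-sectional algebra of $\{B_g\}_{g\in G}$, and everything reduces to a Fell bundle isomorphism with the semidirect product bundle of $\theta$. ($\Phi$ also gives the linear independence of the $B_g$ implicit in writing $\bigoplus_g B_g$: if $\sum_g b_g=0$, then $0=\Phi(b_h^\star\sum_g b_g)=b_h^\star b_h$ for each $h$.)

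Two things in your second step should be fixed. First, the equality $B_g=\lambda_gD_{g^{-1}}=D_g\lambda_g$ does not use the ideal property at all. It follows directly from $\lambda_gV^\star V=V=VV^\star\lambda_g$ for $V\in\sigma^{-1}(g)$, together with unitarity of $\lambda_g$ on $\ell_2(G)$. Second, the expression $a\theta_g(b)$ is not meaningful for $b\in D_h$, because $\theta_g$ is defined only on $D_{g^{-1}}$. The correct computation, for $a\in D_g$ and $b\in D_h$, is
\[
(a\lambda_g)(b\lambda_h)=\lambda_g\,\theta_{g^{-1}}(a)\,b\,\lambda_g^\star\,\lambda_{gh}=\theta_g\bigl(\theta_{g^{-1}}(a)\,b\bigr)\lambda_{gh}.
\]
The second equality is exactly where you need $D_{g^{-1}}$ to be an ideal of $D_\lambda(P)$, so that $\theta_{g^{-1}}(a)b$ lies in the domain of $\theta_g$. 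This is precisely the multiplication rule of the partial crossed product. With these corrections your Fell bundle identification goes through, and hence so does the isomorphism $V_p\mapsto E_{pP}\delta_p$.
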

In \cite{Li17b} it was shown that $\Omega_P$ has a unique smallest $G$-invariant subset, which we shall denote by $\partial \Omega_P$. 
The space $\partial \Omega_P$ can be identified using the semilattice of idempotents on $P$. 

\begin{defn}\label{defn: reduced boundary quotient}
    Let $P$ be a semigroup that embeds in a group $G$, and the space $\partial\Omega_P$ defined as above. 
    The boundary quotient of $C^\star_\lambda(P)$ is defined to be:
    $$\partial C^\star_\lambda(P) := C(\partial\Omega_P) \rtimes_r G.$$
\end{defn}

One may define full boundary quotients as well, in an analogous fashion as done above, we refer the interested reader to \cite{LS22} for further details. 
And now we conclude this subsection by stating a co-universality criterion for the boundary quotient. 
\begin{defn}\label{defn: constructible representation}
    We call an isometric representation of $P$ \textit{constructible} if it induces a representation of $C^\star_s(P)$. 
\end{defn}
 
\begin{thm}[Theorem 4.2, \cite{KKLL22}]\label{thm: co-universality of boundary quotient}
    Let $P$ be a semigroup, embeddable in a group $G$. 
    If $\partial_G[C^\star_s(P)]$ is the $C^\star$-algebra of a $G$-equivariant constructible representation of $P$ such that for every $G$-equivariant constructible representation $T$, there exists a canonical surjective $\star$-homomorphism $C^\star(T) \to \partial_G[C^\star_s(P)]$ that maps the generators to themselves, then 
    $$\partial C^\star_\lambda(P) \cong \partial_G[C^\star_s(P)].$$
\end{thm}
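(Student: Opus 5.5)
The plan is to show that $\partial C^\star_\lambda(P)$ itself has the stated co-universal property. An object with a co-universal property is unique up to a canonical isomorphism fixing the generators, so this gives $\partial C^\star_\lambda(P) \cong \partial_G[C^\star_s(P)]$. There are two things to check.

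\emph{First}, $\partial C^\star_\lambda(P)$ must be the $C^\star$-algebra of a $G$-equivariant constructible representation. By Proposition \ref{prop: partial crossed product realisation of the reduced semigroup C* algebra}, $C^\star_\lambda(P) \cong D_\lambda(P) \rtimes_r G$. Restriction of functions from $\Omega_P$ to the closed $G$-invariant subset $\partial\Omega_P$ is a $G$-equivariant surjection $D_\lambda(P) \to C(\partial\Omega_P)$. Functoriality of reduced partial crossed products then gives a surjection $C^\star_\lambda(P) \to C(\partial\Omega_P)\rtimes_r G$ sending $V_p$ to a generator $W_p$. The relations (S1)--(S3) hold for the $V_p$ and therefore pass to the quotient, so $p \mapsto W_p$ is constructible. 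Finally, the dual coaction of $G$ on a reduced crossed product shows that the representation is $G$-equivariant.

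\emph{Second}, and this is the main step, I must produce a surjection $C^\star(T) \to \partial C^\star_\lambda(P)$ for every $G$-equivariant constructible representation $T$. I would proceed as follows.

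1. Equivariance gives a coaction of $G$ on $C^\star(T)$ in which $T_{p_1}^\star T_{q_1}\cdots T_{p_n}^\star T_{q_n}$ has degree $p_1^{-1}q_1\cdots p_n^{-1}q_n$.
2. Relation (S3) identifies the fixed-point algebra of this coaction with the image of $D_s(P)$. This image is $C(Y)$ for some closed subset $Y$ of the spectrum of $D_s(P)$.
3. Relation (S3) also shows that conjugation by the partial isometries $T_{p_1}^\star T_{q_1}\cdots$ implements the partial action. Hence $Y$ is $G$-invariant and $C^\star(T)$ is a topologically $G$-graded algebra with unit fibre $C(Y)$.
4. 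The standard fact that every topologically graded algebra with unit fibre $A$ surjects onto the reduced crossed product $A \rtimes_r G$ then yields a canonical surjection $C^\star(T) \to C(Y)\rtimes_r G$.
5. Since $Y$ is closed, nonempty and invariant, minimality gives $\partial\Omega_P \subseteq Y$. Restriction to $\partial\Omega_P$ then gives $C(Y)\rtimes_r G \to C(\partial\Omega_P)\rtimes_r G$.

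Composing the maps in steps 4 and 5 gives the required surjection, and it visibly sends generators to generators.

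I expect the main obstacle to lie in steps 2, 3 and 5: identifying the unit fibre of $C^\star(T)$ as a $C(Y)$ on which $G$ acts compatibly, and placing $\partial\Omega_P$ inside $Y$. The difficulty is that $Y$ lives in the spectrum of $D_s(P)$, not of $D_\lambda(P)$, and these need not coincide unless $\J$ is independent (Corollary 3.3 of \cite{KKLL22}). My first attempt would be to show that the minimal closed invariant subset of the spectrum of $D_s(P)$ already lies in the closed subset corresponding to $D_\lambda(P)$, so that it coincides with $\partial\Omega_P$. For this I would use the description of $\partial\Omega_P$ via the semilattice $\J$: its points are the maximal filters, which are detected by the relations (S2)--(S3) alone.
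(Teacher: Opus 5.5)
The paper gives no proof of this statement; it is quoted from \cite{KKLL22}, so there is no internal argument to compare yours with. On its own terms your plan is correct. Proving that $\partial C^\star_\lambda(P)$ is itself co-universal and then invoking uniqueness of co-universal objects does yield the statement, and your first half is fine. Two steps of the second half, however, are asserted rather than proved.

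\emph{Steps 3--4.} Exel's theorem gives a surjection of $C^\star(T)$ onto the reduced cross-sectional algebra of \emph{its own} Fell bundle $\{C^\star(T)_g\}_{g\in G}$. Knowing that conjugation by words implements the partial action only shows that $Y$ is invariant. You still have to identify that bundle with the bundle of the restricted partial action on $C(Y)$. This follows from (S3). For words $w,w'$ of degree $g$, both $w^\star w'$ and $w(w')^\star$ are projections in the commutative algebra $C^\star(T)_e$. This forces $w(w')^\star=(ww^\star)(w'(w')^\star)$, since $w^\star$ and $(w')^\star$ agree on the intersection of their initial spaces. Hence $\norm{\sum_i c_iw_i}=\norm{\sum_i c_iw_iw_i^\star}$, so $\sum_i c_iw_i\mapsto(\sum_i c_iw_iw_i^\star)\delta_g$ is a well-defined isometric isomorphism of Fell bundles.

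\emph{Step 5.} $\partial\Omega_P$ is not the set of maximal filters but its closure. All you need is $Y\cap\Omega_P\neq\emptyset$, where $\Omega_P\subseteq\operatorname{Spec}D_s(P)$ via $\lambda_s$. Then $Y\cap\Omega_P$ is a nonempty closed invariant subset of $\Omega_P$, so it contains $\partial\Omega_P$. To see this, take a maximal filter $\omega$ and a basic neighbourhood $\{\psi:\psi(X)=1,\ \psi(X_1)=\dots=\psi(X_n)=0\}$ of it. Maximality gives $Z_i\in\omega$ with $Z_i\cap X_i=\emptyset$. For $x\in X\cap Z_1\cap\dots\cap Z_n$ one gets $x^{-1}X=P$ and $x^{-1}X_i=\emptyset$. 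So the point evaluation at $x$ lies in that neighbourhood. So does the translate $e_Z\mapsto\chi(e_{x^{-1}Z})$ of any $\chi\in Y$. This translate is defined everywhere since $v_x^\star v_x=1$, and it stays in $Y$ because $T_x^\star(\cdot)T_x$ preserves the kernel of $D_s(P)\to C^\star(T)$. Hence $\omega\in\Omega_P\cap Y$.

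For the statement exactly as formulated, where a co-universal $\partial_G[C^\star_s(P)]$ is assumed to exist, you can skip Steps 1--5 entirely.
\begin{itemize}
\item Your first half and the co-universality of $\partial_G[C^\star_s(P)]$ give a surjection $\rho:\partial C^\star_\lambda(P)\to\partial_G[C^\star_s(P)]$ fixing generators.
\item On both sides the coaction sends the $p$-th generator $x$ to $x\otimes u_p$. So $\rho$ is equivariant and intertwines the conditional expectations onto the unit fibres.
\item $\ker\rho\cap C(\partial\Omega_P)$ is an invariant ideal, and it is proper since $\rho(1)=1$. Minimality of $\partial\Omega_P$ therefore makes it zero.
\item Faithfulness of the conditional expectation on $C(\partial\Omega_P)\rtimes_r G$ then gives $\ker\rho=0$.
\end{itemize}
This avoids $D_s(P)$, the Fell-bundle identification and maximal filters altogether. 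What your longer route buys is the stronger fact that $\partial C^\star_\lambda(P)$ is itself co-universal, i.e.\ that an object as in the hypothesis exists at all. The short argument does not give that.
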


In the right LCM cancellative setting situation, even if $P$ is not group-embeddable, it is still possible to construct boundary quotients through normal coactions, as developed in \cite{BBD26}.

\subsection{Boundary Quotients via Foundation Sets}

One has an alternate definition of the boundary quotient in terms of foundation sets. 
This definition was developed in the context of right LCM semigroups by \cite{St15, BRRW14, LL22}, and extended to the general setting in \cite{LS22}, and further refined in the right LCM setting in \cite{BS16}. 
C. Starling proved in \cite{St15} that this boundary quotient is in fact the tight $C^\star$-algebra of the inverse semigroup, and addressed the issues of pure infiniteness and simplicity. 

\begin{defn}\label{defn: boundary quotient in terms of foundation sets}
    Let $P$ be a right LCM semigroup. 
    A finite subset $F$ is called a \textit{foundation set} for $P$ is for any $p \in P$, there exists some $f \in F$ such that $pP \cap fP \neq \emptyset$. 
%
    The boundary quotient $\mathcal{Q}(P)$ is the universal $C^\star$-algebra generated by the isometries $\{v_p \;\vert\; p \in P\}$ and the projections $\{e_X \;\vert\; X \in \J\}$ satisfying the relations in Defintion \ref{defn: full semigroup C*-algebra} and that whenever $F$ is a foundation set of $P$,
    \begin{align}\label{eqn: foundation set}
        \Pi_{f \in F} (1-e_{fP}) = 0.
    \end{align}
\end{defn}
That is, $\mathcal{Q}(P)$ is a quotient of $C^\star(P)$ by the ideal generated by relations of the form $\Pi_{f \in F}(1-e_{fP}) = 0$ for every foundation set $F$. 
As pointed out by B. Li and M. Laca in \cite{LL22}, denoting $I_\infty$ to be the ideal of $D(P) \cong D_\lambda(P) = C(\Omega_P)$ generated by projections of the form $\Pi_{f \in F} (1-e_{fP}) = 0$ for every foundation set $F$, one has $C(\partial \Omega_P) \cong C(\Omega_P) / I_\infty$, and $\mathcal{Q}(P) \cong C(\partial \Omega_P) \rtimes G \cong C(\partial \Omega_P) \rtimes P$, where the last equivalence (partial crossed product with respect to a semigroup action) is defined and studied in \cite{LL22}. 

This notion may extended beyond the right LCM setting as well, as noted in Remark 5.5 of \cite{BRRW14} by replacing finite subsets of $P$ by finite subsets of $\J$ which have non-trivial intersection with every ideal (since not every constructible ideal is principle in the general case). 
This was further studied in \cite{LS22}, where they 
used it in the concrete presentation of the covariance algebra, which is the full counterpart of the crossed-product version of the boundary quotient. 
%
We conclude this section by recognising that in practice it might be difficult to take the quotient with respect to \textit{every} foundation set, but N. Brownlowe and N. Stammeier developed the notion of \textit{accurate refinement property} in \cite{BS16}, where they reduced the computation by restricting to only certain types of foundation sets, and showed that this property holds in practice for a very large class of semigroups, in particular including directed semigroups and semigroups where principal right ideals corresponding to non-comparable elements are disjoint.

\subsection{Computing Boundary Quotients}


We begin this section with the following celebrated result by C. Sehnem. 
\begin{thm}[Corollary 5.4, \cite{Se22} and Theorem 5.4, \cite{BBD26}]\label{thm: boundary quotient is isomorphic to C*-envelope}
    Let $P$ be a semigroup that is either group-embeddable, or is right LCM cancellative. 
    Then 
    $$\partial C^\star_\lambda(P) \cong C^\star_{env}(\A_\lambda(P)),$$
    via an isomorphism that identifies the canonical generating isometries. 
\end{thm}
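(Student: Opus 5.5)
The statement $\partial C^\star_\lambda(P) \cong C^\star_{env}(\A_\lambda(P))$ is cited from \cite{Se22} and \cite{BBD26}. Reconstructing it here would mean reproducing those papers, so the plan below only sketches how their arguments are organised in the two cases.

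\emph{Group-embeddable case.} Let $P \subseteq G$. By Proposition \ref{prop: partial crossed product realisation of the reduced semigroup C* algebra}, $C^\star_\lambda(P) \cong D_\lambda(P) \rtimes_r G$, and $\partial C^\star_\lambda(P) = C(\partial\Omega_P)\rtimes_r G$ is a quotient of it. The argument then runs in four steps.
\begin{enumerate}[(i)]
\item Regard $\A_\lambda(P)$ together with the canonical normal coaction of $G$, given by $V_p \mapsto V_p \otimes u_p$. Form the $C^\star$-envelope of this cosystem, which is the co-universal $G$-equivariant $C^\star$-cover.
\item Show that this equivariant envelope equals $\partial C^\star_\lambda(P)$. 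A $G$-equivariant cover is a quotient of $C^\star_\lambda(P)$ whose kernel is an induced ideal, coming from a closed $G$-invariant subset of $\Omega_P$. Among those subsets, $\partial\Omega_P$ is the unique smallest.
\item Show that the restriction of $C^\star_\lambda(P) \to \partial C^\star_\lambda(P)$ to $\A_\lambda(P)$ is completely isometric. The approach is to check isometry on the fixed-point part, the diagonal, and then use a conditional expectation onto it, using the fact that $\partial\Omega_P$ is dense in the relevant sense in the boundary of the spectrum.
\item Conclude that the plain $C^\star$-envelope is automatically equivariant. This is the key input from Sehnem: the coaction on $\A_\lambda(P)$ extends to the $C^\star$-envelope. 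Hence $C^\star_{env}(\A_\lambda(P))$ coincides with the equivariant envelope, which by step (ii) is $\partial C^\star_\lambda(P)$.
\end{enumerate}

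\emph{Right LCM cancellative case.} Here no group is available. Following \cite{BBD26}, one replaces $G$ by a universal groupoid or coaction construction, and uses $\mathcal{Q}(P)$, described via foundation sets, in place of $C(\partial\Omega_P)\rtimes_r G$. The same two pillars then need to be established: that the Šilov ideal is invariant under the coaction, and that the boundary ideal is the maximal invariant ideal whose quotient is completely isometric on $\A_\lambda(P)$.

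The main obstacle is step (iv), the automatic equivariance of the envelope, i.e.\ showing that the Šilov ideal is coaction-invariant. I would try to prove this by tensoring with $C^\star_\lambda(G)$ and then using the co-universal property of the envelope, following Dor-On–Kakariadis–Katsoulis–Laca–Li.
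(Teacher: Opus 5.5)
The paper gives no argument for this statement; it imports it from \cite{Se22} and \cite{BBD26}. So deferring to those references is all that is needed. Your overall architecture is also the right one: identify the envelope of the cosystem with $\partial C^\star_\lambda(P)$, then show the plain envelope is automatically equivariant. The problem is that the arguments you sketch for the substantive steps would not work.

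\textbf{Step (iv) is circular.} This is the decisive gap. Tensoring gives the cover $C_\delta := C^\star(\iota(V_p)\otimes\lambda_p : p\in P)\subseteq C^\star_{env}(\A_\lambda(P))\otimes C^\star_\lambda(G)$. Co-universality then yields only a $\star$-epimorphism $\Phi\colon C_\delta\to C^\star_{env}(\A_\lambda(P))$ with $\iota(V_p)\otimes\lambda_p\mapsto\iota(V_p)$. A compatible coaction $\Delta$ on the envelope exists if and only if $\Phi$ is injective: if $\Delta$ exists, then $\Phi\circ\Delta=\mathrm{id}$ and $\Delta$ maps onto $C_\delta$. Injectivity of $\Phi$ is the same as invariance of the {\v S}ilov ideal, which is exactly what you set out to prove. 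Co-universality only ever produces maps \emph{onto} the envelope, never out of it. The missing ingredient is a genuinely new argument that the coaction descends through the {\v S}ilov ideal, and that is the actual content of the cited results. At that point your plan, like the paper, reduces to citation.

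\textbf{Step (iii) uses the wrong mechanism.} The quotient map $q\colon C^\star_\lambda(P)\to\partial C^\star_\lambda(P)$ is not isometric on the diagonal. It kills $C_0(\Omega_P\setminus\partial\Omega_P)$; for $P=\bN$ it kills the compacts, e.g.\ $1-V_1V_1^\star$. Moreover, since $P\subseteq G$, the fixed-point part of $\A_\lambda(P)$ is just $\C 1$. A conditional-expectation argument is a device for proving injectivity, not for controlling norms of sums in a non-selfadjoint algebra.

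The correct argument is easy. Take $\chi\in\partial\Omega_P$ and let $\pi_\chi$ be the regular representation of $C(\partial\Omega_P)\rtimes_r G$ on $\ell_2(G_\chi)$, where $G_\chi=\{g\in G : \chi\in\Omega_{g^{-1}}\}$.
\begin{itemize}
\item Since $V_p^\star V_p=1$, the partial homeomorphism attached to $p$ is defined on all of $\Omega_P$, so $\pi_\chi(q(V_p))\delta_g=\delta_{pg}$ for $g\in G_\chi$.
\item Since $e\in G_\chi$, $\ell_2(P)$ is an invariant subspace, and on it $\pi_\chi\circ q$ restricts to the left regular representation.
\item Hence $\norm{a}\le\norm{q^{(n)}(a)}\le\norm{a}$ for all $a\in M_n(\A_\lambda(P))$.
\end{itemize}

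\textbf{The right LCM case targets the wrong algebra.} You cannot replace the reduced boundary quotient by $\mathcal{Q}(P)$. The latter is a quotient of $C^\star(P)$, whereas the envelope must be a quotient of the cover $C^\star_\lambda(P)$. For a non-amenable group $P=G$, which is right LCM and cancellative, $\mathcal{Q}(P)\cong C^\star(G)$ while $C^\star_{env}(\A_\lambda(P))=C^\star_\lambda(G)$; the paper's own remark points this out. What is needed is the reduced boundary quotient built from normal coactions in \cite{BBD26}.
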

Furthermore, when $P$ is right-Ore (recall Definition \ref{defn: Ore}: cancellative and left-reversible), then $G=PP^{-1}$, $\partial\Omega_P$ is just a single point, and thus $\partial C^\star_\lambda(P) = C(\partial\Omega_P) \rtimes_r G = C^\star_\lambda (G)$. 
This brings us to the following theorem: 
\begin{thm}[Part of Theorem 4.6, \cite{KKLL22}]\label{thm: boundary quotients for right-Ore semigroups}
    Let $P$ be a semigroup that embeds in a group, and let $G$ be the group that it generates. 
    Then, the following are equivalent:
    \begin{enumerate}
        \item[(i)] $P$ is right-Ore, which implies that $G = PP^{-1}$ (by Theorem \ref{thm: Ore embedding}).
        \item[(ii)] The map $V_p \mapsto \lambda_p$ extends to a completely isometric map $\A_\lambda(P) \to C^\star_\lambda(G)$. 
        \item[(iii)] $C^\star_{env}(\A_\lambda(P)) \cong  C^\star_\lambda(G)$ by a canonical $\star$-homomorphism that fixes $P$. 
    \end{enumerate}
    Any of the conditions above further imply that $\A_\lambda(P)$ is hyperrigid. 
\end{thm}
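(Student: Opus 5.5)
The statement to prove is the last one displayed, Theorem \ref{thm: boundary quotients for right-Ore semigroups}: for $P \subseteq G$ with $G$ generated by $P$, (i) right Ore, (ii) complete isometry $\A_\lambda(P)\to C^\star_\lambda(G)$, and (iii) $C^\star_{env}(\A_\lambda(P))\cong C^\star_\lambda(G)$ canonically are equivalent, and they imply hyperrigidity. I will prove (i)$\Rightarrow$(ii)$\Rightarrow$(iii)$\Rightarrow$(i), then deduce hyperrigidity.

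\textbf{(i)$\Rightarrow$(ii).} Write $U_p$ for the unitary $\lambda_p$ on $\ell_2(G)$, so that $V_p = U_p\vert_{\ell_2(P)}$. Since $\ell_2(P)$ is $U_p$-invariant for every $p\in P$, compression gives a completely contractive homomorphism $\overline{\textrm{alg}}\{U_p\} \to \A_\lambda(P)$ sending $U_p$ to $V_p$. The content of (ii) is the reverse inequality: for every matrix polynomial $A=\sum_p a_p\otimes U_p$ we need $\norm{A}\le \norm{\sum_p a_p\otimes V_p}$.

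Left-reversibility makes the translates $g\,\ell_2(P)$ upward directed with dense union. Indeed, $G=PP^{-1}$, and for $p,q\in P$ there is $r\in pP\cap qP$, which gives $p^{-1}P\cup q^{-1}P\subseteq r^{-1}P$. Since $G=PP^{-1}$, every finite set in $G$ lies in some $s^{-1}P$. Hence $\norm{A}$ is approximated by compressions of $A$ to $\ell_2(s^{-1}P)\otimes\C^n$.

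The operator $U_{s^{-1}}$ is unitary, maps $\ell_2(P)$ onto $\ell_2(s^{-1}P)$, and commutes with the right-regular action, which is not needed here. The point is to compare the compression of $A$ to $\ell_2(s^{-1}P)$ with $\sum a_p\otimes V_p$. Conjugating the compression by $U_{s}$ yields $\P_{\ell_2(P)}\sum a_p\otimes U_{sps^{-1}}\vert_{\ell_2(P)}$. This is not obviously of the right form, so I will instead use the right-regular representation $\rho_g$, which commutes with every $U_p$. We have $\rho_{s}\ell_2(P)=\ell_2(Ps^{-1})$, and we need sets $Ps^{-1}$ exhausting $G$.

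Using right translates requires $G=P^{-1}P$ together with $Pp\cap Pq\neq\emptyset$, which is the opposite Ore condition. The conventions therefore have to be checked carefully, and this is the main obstacle. With the correct side, $\ell_2(Ps^{-1})$ is $U_p$-invariant and the compression of $A$ to it is unitarily equivalent, via $\rho_s$, to $\sum a_p\otimes V_p$. Taking the supremum over $s$ then gives $\norm{A}\le\norm{\sum a_p\otimes V_p}$, and hence a complete isometry. I expect the paper's left/right conventions for $V$ to match this.

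\textbf{(ii)$\Rightarrow$(iii).} By (ii), $(C^\star_\lambda(G),\iota)$ with $\iota(V_p)=\lambda_p$ is a $C^\star$-cover of $\A_\lambda(P)$, so there is a canonical surjection $C^\star_\lambda(G)\to C^\star_{env}(\A_\lambda(P))$. By Theorem \ref{thm: boundary quotient is isomorphic to C*-envelope}, the envelope is $\partial C^\star_\lambda(P)=C(\partial\Omega_P)\rtimes_r G$, which carries the canonical conditional expectation onto $C(\partial\Omega_P)$ and hence a faithful-on-fibres $G$-coaction. The surjection is $G$-equivariant for the canonical gradings, and the reduced group algebra is co-universal among equivariant quotients (Fell bundle with trivial fibre), so the surjection is injective.

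\textbf{(iii)$\Rightarrow$(i).} If $P$ is not left-reversible, pick $p,q\in P$ with $pP\cap qP=\emptyset$. Then $E_{pP}E_{qP}=0$ in $C^\star_\lambda(P)$, and the foundation and partial-action picture keeps these projections nonzero in the boundary quotient. In $C^\star_\lambda(G)$, however, $\lambda_p\lambda_p^\star=1$. This contradicts a canonical isomorphism sending $V_p$ to $\lambda_p$, because the image of $v_pv_p^\star$ would be both $1$ and orthogonal to the image of $v_qv_q^\star=1$. Cancellativity holds automatically since $P$ embeds in a group.

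\textbf{Hyperrigidity.} Since $C^\star_{env}=C^\star_\lambda(G)$ is generated by unitaries $\lambda_p$, any unital completely positive extension of a representation restricted to unitaries is multiplicative on them by the Schwarz equality. Every representation therefore has the unique extension property, which gives hyperrigidity.
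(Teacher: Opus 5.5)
Your plan is sound, but (i)$\Rightarrow$(ii) is not proved as written. The one verification it rests on is done incorrectly and then left open.

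The left-translate step is false. $p^{-1}P\subseteq r^{-1}P$ holds iff $rp^{-1}\in P$, i.e.\ $r\in Pp$, so choosing $r\in pP\cap qP$ gives nothing. Likewise the sets $s^{-1}P$ exhaust $P^{-1}P$, not $PP^{-1}$.

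More importantly, your claim that right translates need $G=P^{-1}P$ and $Pp\cap Pq\neq\emptyset$ is backwards. It leads you to call the key step an unresolved ``main obstacle''. In fact right translates need exactly the paper's right Ore hypothesis:
\begin{itemize}
\item $\bigcup_{s\in P}Ps^{-1}=PP^{-1}=G$.
\item For $s,t\in P$, left reversibility gives $d=sx=ty$ with $x,y\in P$, so $Ps^{-1}=Px\,d^{-1}\subseteq Pd^{-1}$ and likewise $Pt^{-1}\subseteq Pd^{-1}$.
\end{itemize}
Hence the subspaces $\ell_2(Ps^{-1})=\rho_s\ell_2(P)$, $s\in P$, are upward directed with dense union. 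Each is invariant under $\lambda_p$ for $p\in P$, and $\rho_sV_p=\lambda_p\rho_s\vert_{\ell_2(P)}$. Every finitely supported $\xi\in\ell_2(G)\otimes\C^n$ lies in one of them, so $\norm{(\sum a_p\otimes\lambda_p)\xi}\le\norm{\sum a_p\otimes V_p}\norm{\xi}$. Together with the restriction inequality this gives the complete isometry. With this check supplied, and the left-translate paragraph deleted, (i)$\Rightarrow$(ii) is complete.

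The other steps hold up.
\begin{itemize}
\item In (iii)$\Rightarrow$(i) you need nothing about projections surviving in the boundary quotient; the foundation-set picture is not available for general $P$ anyway. $C^\star_{env}(\A_\lambda(P))$ is a quotient of the cover $C^\star_\lambda(P)$, so $V_pV_p^\star V_qV_q^\star=0$ descends to it, while its image in $C^\star_\lambda(G)$ is $1$.
\item Your coaction argument for (ii)$\Rightarrow$(iii) is valid. Composing the surjection with the conditional expectation of $C(\partial\Omega_P)\rtimes_r G$ yields $\tau_G(\cdot)1$, and $\tau_G$ is faithful.
\item That argument is heavier than necessary, though. Extend the unital complete isometry $\iota_{env}(V_p)\mapsto\lambda_p$ to a UCP map $\Phi\colon C^\star_{env}(\A_\lambda(P))\to\B(\ell_2(G))$. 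Since $\Phi$ sends each isometry $\iota_{env}(V_p)$ to the unitary $\lambda_p$, the multiplicative-domain argument from your hyperrigidity paragraph makes $\Phi$ a $\star$-homomorphism. It is then a left inverse of the canonical surjection $C^\star_\lambda(G)\to C^\star_{env}(\A_\lambda(P))$. So (ii)$\Rightarrow$(iii) needs neither Theorem \ref{thm: boundary quotient is isomorphic to C*-envelope} nor coactions.
\end{itemize}

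The paper gives no proof of its own. It cites \cite{KKLL22}, and indicates (i)$\Rightarrow$(iii) by noting that $\partial\Omega_P$ is a single point for right Ore $P$. Then $\partial C^\star_\lambda(P)=C^\star_\lambda(G)$, which Theorem \ref{thm: boundary quotient is isomorphic to C*-envelope} identifies with the envelope. Your repaired route is more elementary. It establishes the complete isometry (ii) by a direct Hilbert-space exhaustion, without computing $\partial\Omega_P$ or invoking Sehnem's theorem for that direction.
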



We state and prove the following proposition about $\mathcal{Q}(P)$ below . 
\begin{prop}\label{prop: Q(P) for left reversible}
    If $P$ group-embeddable, right LCM, and left-reversible, (i.e. right-Ore) then
    $$\mathcal{Q}(P) \cong C^\star(G)
    ,$$ 
    where $G$ is the group $P$ generates. 
\end{prop}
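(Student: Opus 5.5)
We will show directly that, under left-reversibility, the foundation-set relations force every generator $v_p$ to be unitary in $\mathcal{Q}(P)$, and then compare universal properties with $C^\star(G)$.

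\emph{Step 1: foundation sets and unitaries.} Since $P$ is left-reversible, $pP \cap qP \neq \emptyset$ for all $p,q \in P$. Hence every singleton $\{f\}$, $f \in P$, is a foundation set in the sense of Definition \ref{defn: boundary quotient in terms of foundation sets}. The relation \eqref{eqn: foundation set} then gives $1 - e_{fP} = 0$, that is, $v_f v_f^\star = e_{fP} = 1$ in $\mathcal{Q}(P)$ (using (F1ii) and (F2i)). Together with $v_f^\star v_f = 1$, each $v_p$ is a unitary in $\mathcal{Q}(P)$. Moreover, every projection $e_X$ with $X \neq \emptyset$ equals $1$. Indeed, for right LCM $P$ we have $X = rP$, so $e_X = v_r v_r^\star = 1$; and $X \cap Y$ is never empty for nonempty principal ideals.

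\emph{Step 2: a homomorphism $C^\star(G) \to \mathcal{Q}(P)$.} By Step 1, $p \mapsto v_p$ is a unitary representation of $P$ in $\mathcal{Q}(P)$. Since $P$ is right Ore, Theorem \ref{thm: Ore embedding} gives $G = PP^{-1}$. We extend to $G$ by $u_{pq^{-1}} := v_p v_q^\star$. Well-definedness and multiplicativity are the standard Ore-localisation argument:
\begin{itemize}
\item if $pq^{-1} = p'q'^{-1}$, choose $a,b$ with $qa = q'b$ (left-reversibility); then $pa = p'b$, and $v_p v_q^\star = v_{pa} v_{qa}^\star$ because $v_a v_a^\star = 1$;
\item products are handled the same way, rewriting $q^{-1}p'$ as $st^{-1}$ using $qs' = p't'$.
\end{itemize}
The universal property of $C^\star(G)$ then yields a $\star$-homomorphism $C^\star(G) \to \mathcal{Q}(P)$, which is surjective since the $v_p$ generate.

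\emph{Step 3: a homomorphism $\mathcal{Q}(P) \to C^\star(G)$.} Set $v_p \mapsto u_p$ (the canonical unitaries) and $e_X \mapsto 1$ for $X \neq \emptyset$, $e_\emptyset \mapsto 0$. The relations of Definition \ref{defn: full semigroup C*-algebra} then hold:
\begin{itemize}
\item (F1i) holds since $u$ is a homomorphism;
\item (F1ii) holds since $u_p u_p^\star = 1$ and $pX \neq \emptyset$ whenever $X \neq \emptyset$;
\item (F2i)--(F2iii) hold since a nonempty $X \cap Y$ is again principal and nonempty, using left-reversibility and right LCM.
\end{itemize}
The foundation relations hold trivially, because the product of the $1 - e_{fP}$ is $0$. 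By the universal property of $\mathcal{Q}(P)$ we get a homomorphism inverse to that of Step 2 on generators, hence the isomorphism.

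The main subtlety I expect is in Step 3 and in the setup. One must check that the only constructible ideals are $\emptyset$ and nonempty principal ones, so that mapping all nonempty $e_X$ to $1$ is consistent. One must also handle the well-definedness in Step 2 carefully (equivalently, that the $G$ generated is exactly the Ore group $PP^{-1}$, with no extra relations).
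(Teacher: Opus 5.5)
Your proposal is correct and follows the same route as the paper: every singleton is a foundation set, so each $v_p$ is unitary in $\mathcal{Q}(P)$, and the result is identified with $C^\star(G)$. Your Steps 2 and 3 fill in what the paper compresses into its final sentence. Step 2 extends $p\mapsto v_p$ to $G=PP^{-1}$ by the Ore argument. Step 3 checks that $v_p\mapsto u_p$, $e_X\mapsto 1$ for $X\neq\emptyset$, respects all the defining relations of $\mathcal{Q}(P)$, so the two maps are mutually inverse.
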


\begin{proof}
    If $P$ is left-reversible (Definition \ref{defn: Ore}), then for every $f,p \in P$, $pP\cap qP \neq \emptyset$. 
    This implies that 
    every singleton $\{p\}\subset P$ 
    is a foundation set. 
    Hence, if the condition \ref{eqn: foundation set} in the definition of boundary sets reduces to 
    $$1-v_pv_p^\star = 0 \;\; \textrm{ for all } \;\; p \in P.$$
    That is every isometry $v_p$ generating $\mathcal{Q}(P)$ is a co-isometry as well, hence a unitary. 
    This implies that the $C^\star$-algebra generated by these unitaries is just the (universal) $C^\star$-algebra of the group it generates. 
    This proves that $\mathcal{Q}(P) \cong C^\star(G)$. 
\end{proof}

And now, we provide some explicit examples of computations below. 


\begin{ex}
    We compute the boundary quotients for the semigroups we have considered so far:
    \begin{enumerate}
        \item[(i)] For $\mathbb{N}^k$, since it is Ore, and $\Z^k$ is its enveloping group, by Theorem \ref{thm: boundary quotients for right-Ore semigroups}, $\partial C^\star_\lambda(\mathbb{N}^k) \cong C^\star(\Z^k)$. 
        Even from the foundation set viewpoint, we see that since $\Z^k$ is left-reversible, by Proposition \ref{prop: Q(P) for left reversible}, we have that $\mathcal{Q}(P) \cong C^\star(\Z^k)$ as well. 
        \item[(ii)] For $\mathbb{F}_k^+$, 
        $\partial C^\star_\lambda(\mathbb F^+_k) \cong \cO_k$ (the reader is referred to \cite{CELY17} for a direct proof of the $k=2$ case, or an application of Theorem 6.7, \cite{CL07} for the general case). 
        Section 5.1 of \cite{St15} shows that $\mathcal{Q}(\mathbb{F}_k^+) = \mathcal{O}_k$ too. 
        \item[(iii)] J. Crisp and M. Laca computed $\partial C^\star_\lambda(A_M^+)$ for right-angled Artin monoids $A_M^+$ in \cite{CL07}. 
        They used the quasi-lattice order property to characterise the Nica spectrum $\Omega_P$ and proceeded from there. 
        If $A_M^+$ is of finite type, then it is Ore, and we may use Proposition \ref{prop: Q(P) for left reversible} to obtain $\mathcal{Q}(A_M^+) = C^\star(A_M)$. 
    \end{enumerate}
\end{ex}

\begin{rem}
    We see that $\mathcal{Q}(P) \cong \partial C_\lambda^\star(P)$ in the examples computed above in $(i)$ and $(ii)$. 
    One should not expect them to be isomorphic in general: for example, one might quite trivially consider $P$ to be a non-amenable group $G$, and then $\mathcal{Q}(P) \cong C^\star(G) \ncong C^\star_\lambda(G) \cong \partial C^\star_\lambda(P)$. 
    This happens more generally for right-Ore monoids whose enveloping groups are non-amenable. 
    Connections between the two notions of boundary quotients have been investigated, but to the best of our knowledge, 
    how they are related is not very clear in the most general cases. 
    An interesting problem would be to investigate if there exists a semigroup where the universal counterpart of $\partial C^\star_\lambda(P)$, which is $C^\star_{sc}(P)$ (see \cite{Se19, KKLL22} for details) fails to agree with $\mathcal{Q}(P)$.
    This would imply a non-equivalence of the two notions. 
\end{rem}

\section{Dilating Semigroup Representations to the Boundary}

In this section we address the problem of dilation of a representation of a semigroup to its boundary quotient.  
But first, we briefly discuss dilations of semigroup representations.

\subsection{Dilations of Representations of Semigroups}

Dilations of representations of semigroups are defined in a fashion identical to that for representations of operator algebras. 

\begin{defn}
    Let $P$ be a semigroup, and $T:P \to \B(\cH)$ be a representation of $P$. 
    We say a representation $V: P \to \B(\K)$, with $\K \supseteq \cH$, is a \textit{dilation} of $T$ if $\P_\cH V(p) \vert_\cH = T(p)$ for all $p \in P$. 
    If each element $V(p)$ is an isometry (or unitary), we call $(V, \K)$ an \textit{isometric (corrsp. unitary) dilation}. 
    If $\K = \overline{span}\{ V(p)h \; \vert \; h \in \cH\textrm{, }p \in P \}$, we call the dilation \textit{minimal}. 
\end{defn}

It can be easily seen that if $T$ has an isometric dilation, then $T$ must be a contractive representation (i.e., for each $p\in P$, $T(p)$ is a contraction). 
%
To the best of our knowledge, there is no general characterisation of constructible representations (Definition \ref{defn: constructible representation}). 
However, in the right LCM setting, we do have the following notion of ``covariant representations'', which was first formulated by A. Nica in \cite{Ni92} for quasi-lattice ordered semigroups and their $C^\star$-algebras, and generalised to right LCM semigroups in \cite{CELY17}.

\begin{defn}\label{defn: Nica-covariant}
    Let $P$ be a left-cancellative right LCM semigroup. 
    A representation 
    $V:P \to \B(\cH)$ is said to be an \textit{isometric Nica-covariant  representation} if $V(p)$ is an isometry for every $p$ in $P$, and for all $p, q \in P$,
    \begin{align*}
        V(p) V(p)^\star V(q) V(q)^\star = 
        \begin{cases}
            V(r)V(r)^\star &\textrm{ if } rP = pP \cap qP \textrm{ ,} \\
            0 &\textrm{ if } pP \cap qP = \emptyset \textrm{ .}
        \end{cases}
    \end{align*}
\end{defn}

One may see that this is indeed well-defined: for any $r,r' \in p \vee q$, there exists $u \in P^\star$ such that $r' = ru$, and since $u$ is invertible, so must be $V(u)$, i.e. $V(u)$ must be a unitary. 
Hence $V(r')V(r')^\star = V(ru)V(ru)^\star = V(r) V(u) V(u)^\star V(r)^\star = V(r)V(r)^\star$. 

\begin{ex}\label{ex: isometric Nica-covariant representations}
    We provide a couple of examples of isometric Nica-covariant representations: 
    \begin{enumerate}
        \item[(i)] For $\bN^k$, let $\{e_i\}_{i=1}^k$ be the canonical generating set, and denote $V(e_i) =: V_i$. 
        Then, $V_iV_j=V_jV_i$ necessarily. 
        Every isometric representation is easily seen to be Nica-covariant for $k=1$. 
        For $k \geq 2$, $i\neq j$ implies $e_i \vee e_j = e_i e_j$, and hence Nica-covariance reduces to $V_i V_i^\star V_j V_j^\star = V_i V_j V_j^\star V_i^\star
        $. 
        This reduces to $V_i V_j^\star = V_j^\star V_i$ for all $i \neq j$. 
        That is, the $V_i$'s must form a doubly commuting family of isometries. 
        Any such doubly commuting family of isometries provides a Nica-covariant representation of $\bN^k$. 
    
    \item[(ii)] For $\bF^{+}_k$ 
    with $k \geq 2$, using similar notation as above, $i \neq j$ implies $e_i \vee  e_j = \emptyset$. 
    Hence Nica-covariance demands that $V_i V_i^\star V_j V_j^\star = 0$, which reduces to $V_i^\star V_j = 0$ for all $i \neq j$. 
    Thus, an isometric representation is Nica-covariant if and only if the images of the generators yields a family of isometries with pairwise orthogonal ranges. 
    \end{enumerate}
\end{ex}

One might go a bit further and formulate conditions for contractive representations to admit isometric Nica-covariant dilations. 
We refer the reader to B. Li's work on this \cite{Li19}. 
Motivated by Neumark dilations \cite{Na43, Po99}, B. Li introduced the concept of $\star$-regular dilation in \cite{Li17, Li19}, which we shall not define here. 
We provide a necessary and sufficient condition for the existence of isometric Nica-covariant dilations:
\begin{thm}[Part of Theorem 3.9, \cite{Li19}]\label{thm: Boyu's theorem}
    Let $T$ be a 
    representation of a right LCM semigroup. 
    Then, the following are equivalent:
    \begin{enumerate}[(i)]
        \item $T$ has an isometric Nica-covariant dilation.
        \item For any finite set $F \subset P$, 
        \begin{align}\label{eq: Boyu's condition for isometric Nica-covariant dilation}
            Z(F) = \sum_{U \subseteq F} (-1)^{\lvert U \rvert} T(\vee U) T^\star(\vee U) \geq 0 .
        \end{align}
    \end{enumerate}
\end{thm}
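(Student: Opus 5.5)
\emph{Necessity.} Suppose $V:P\to\B(\K)$ is an isometric Nica-covariant dilation of $T$. First I would pass to $\K_0:=\overline{span}\{V(p)h \st h\in\cH,\ p\in P\}$ and show two things.

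\textbf{Co-invariance.} $\cH$ is co-invariant for $V$ on $\K_0$. For $h,h'\in\cH$ and $p,q\in P$,
$$\inp V(p)^\star h, V(q)h' = \inp h, V(pq)h' = \inp h, T(p)T(q)h' = \inp T(p)^\star h, V(q)h'.$$
Hence $V(p)^\star h - T(p)^\star h\perp\K_0$, so $P_{\K_0}V(p)^\star h=T(p)^\star h\in\cH$.

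\textbf{Nica-covariance of the restriction.} $\K_0$ is invariant under every $V(p)V(p)^\star$. Take $k=V(q)h$. If $pP\cap qP=\emptyset$, then $V(p)V(p)^\star k=V(p)V(p)^\star V(q)V(q)^\star V(q)h=0$. Otherwise $pP\cap qP=rP$ with $r=qs$, and
$$V(p)V(p)^\star V(q)h=V(r)V(r)^\star V(q)h=V(r)V(s)^\star h.$$
Here $V(s)^\star h$ must be handled inside $\K$. I would either first arrange $\cH$ to be co-invariant in $\K$ itself, or replace $\K_0$ by the smallest subspace containing $\cH$ that is invariant under all $V(p)$ and all $V(p)V(p)^\star$. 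I would then check that co-invariance survives the iteration, which is the same inner-product argument. After this, the restricted $V$ on $\K_0$ is still an isometric Nica-covariant dilation, with $V(p)^\star\cH\subseteq\cH$.

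\textbf{Positivity of $Z(F)$.} Co-invariance gives $P_\cH V(r)V(r)^\star\vert_\cH=T(r)T(r)^\star$. The projections $V(f)V(f)^\star$ commute, and by Nica-covariance their product over $U\subseteq F$ is $V(\vee U)V(\vee U)^\star$, or $0$ when $\bigcap_{f \in U} fP=\emptyset$ (matching the convention $T(\vee U)=0$ in that case). So
$$Z(F)=P_\cH\prod_{f\in F}\bigl(1-V(f)V(f)^\star\bigr)\Big\vert_\cH\geq 0.$$

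\emph{Sufficiency.} I would build the dilation through the diagonal. Since $P$ is right LCM, $\J$ is independent and $D(P)\cong C(\Omega_P)$. Its clopen basic sets are the cylinders $\{pP\}\setminus(q_1P\cup\dots\cup q_nP)$, whose indicators are $e_{pP}\prod_i(1-e_{q_iP})$. Reducing to $q_i\in pP$ via $\vee$, write $q_i=pf_i$. I would assign to such a cylinder the operator
$$T(p)\,Z(\{f_1,\dots,f_n\})\,T(p)^\star\geq0.$$
The steps are as follows.
\begin{enumerate}[(a)]
\item Check that this assignment is finitely additive, by an inclusion–exclusion computation with $\vee$. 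This gives a positive, hence completely positive, map $\phi:D(P)\to\B(\cH)$.
\item Extend $\phi$ to a UCP map $\Phi$ on $C^\star(P)$ with $\Phi(v_p e_X v_q^\star)=T(p)\phi(\cdots)T(q)^\star$. Equivalently, show that the kernel $(p,q)\mapsto$ ``$T(p)\,\cdot\,T(q)^\star$'' on the inverse semigroup $\I_V$ is positive definite.
\item Apply Stinespring to $\Phi$. The resulting $\star$-representation $\pi$ of $C^\star(P)$ gives an isometric Nica-covariant $V(p)=\pi(v_p)$, and by construction $P_\cH V(p)\vert_\cH=T(p)$.
\end{enumerate}

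The main obstacle is step (b): proving complete positivity of the extension off the diagonal. I would first try to prove positivity of the kernel on finite sets $F\subset P$ closed under $\vee$. I would decompose $\cH$-valued sums along the minimal cylinders of the finite Boolean algebra generated by $\{fP\}_{f \in F}$, and then identify the Gram matrix as $\sum_{p} T(p)Z(F_p)T(p)^\star$-type blocks, where $F_p=\{f' \st pf'\in F\}$. Each block is positive by hypothesis (ii), and summing them would give the required positivity.
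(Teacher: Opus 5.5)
The paper gives no proof of this statement; it is quoted from Li's work. Against the statement as quoted, the necessity half of your proposal has a gap that cannot be closed. With the paper's notion of dilation (compression only, no minimality), the implication (i)$\Rightarrow$(ii) is false.

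Take $P=\bN^2$, let $S$ be the unilateral shift on $\cH=\ell_2(\bN)$ and $U$ the bilateral shift on $\K=\ell_2(\Z)\supseteq\cH$. Put $T(m,n)=S^{m+n}$ and $V(m,n)=U^{m+n}$.
\begin{itemize}
\item Since $U\cH\subseteq\cH$ and $U\vert_{\cH}=S$, $V$ is a dilation of $T$.
\item $V$ is unitary, so $V(p)V(p)^\star=1$ for all $p$. Since no $pP\cap qP$ is empty in $\bN^2$, the Nica-covariance relations hold trivially.
\item Yet for $F=\{(1,0),(0,1)\}$,
\[ Z(F)=1-2SS^\star+S^2S^{\star 2}=E_0-E_1, \]
where $E_k$ is the projection onto $\C\delta_k$. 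This is not positive.
\end{itemize}

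Here is where your argument breaks on this example. Your $\K_0$ is $\cH$ itself, and the restriction is $(S,S)$. This is not Nica-covariant: for $\bN^2$ that means doubly commuting, and $S^\star S\neq SS^\star$.

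Your second fix treats $V(p)V(p)^\star\vert_{\K_1}$ as the range projection of $W(p)=V(p)\vert_{\K_1}$. These agree only when $\K_1$ is invariant under every $V(p)^\star$, i.e.\ reducing. Compressing to $\cH$ through a reducing subspace does not change $\P_{\cH}V(r)V(r)^\star\vert_{\cH}$. That operator equals $T(r)T(r)^\star$ only if $V(r)^\star\cH\subseteq\cH$ already holds in $\K$, and here $U^\star\delta_0=\delta_{-1}\notin\cH$. Your first fix, making $\cH$ co-invariant in $\K$ itself, is exactly what cannot be arranged.

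The statement becomes true if (i) asks for a \emph{minimal} isometric Nica-covariant dilation, $\K=\overline{span}\{V(p)h\}$, and then your necessity argument works verbatim. Your inner-product computation gives $V(p)^\star\cH\subseteq\cH$ in $\K$, and compressing $\prod_{f\in F}(1-V(f)V(f)^\star)$ yields $Z(F)\geq 0$.

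For sufficiency your outline is the standard route. Let $K(p,q)=T(p')T(q')^\star$ when $pp'=qq'\in p\vee q$, and $K(p,q)=0$ when $pP\cap qP=\emptyset$. For $x_i=v_{a_i}v_{b_i}^\star$,
\[ [\Phi(x_i^\star x_j)]=\mathrm{diag}(T(b_i))\,[K(a_i,a_j)]\,\mathrm{diag}(T(b_j)^\star). \]
So step (b) is exactly positive-definiteness of $K$. Building the dilation directly from this kernel (Naimark/GNS) also sidesteps well-definedness of $\Phi$ on $C^\star(P)$.

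For finite $F\ni e$ closed under $\vee$, the identity you want is
\[ K(p,q)=\sum_{s\in F,\ sP\subseteq pP\cap qP}T(p^{-1}s)\,Z(F_s)\,T(q^{-1}s)^\star, \qquad F_s=\{s^{-1}t\mid t\in F,\ tP\subsetneq sP\}. \]
This follows from a M\"obius-inversion identity on the lattice of ideals $sP$, $s\in F$. It gives
\[ \sum_{p,q}\langle K(p,q)h_q,h_p\rangle=\sum_s\langle Z(F_s)x_s,x_s\rangle\geq 0,\qquad x_s=\sum_{q:\,s\in qP}T(q^{-1}s)^\star h_q. \]
Your $F_p=\{f'\mid pf'\in F\}$ must exclude $f'$ with $pf'P=pP$. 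As written it contains $e$, and $Z$ of any set containing $e$ is $0$.

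Finally, the Stinespring dilation you build has $\cH$ co-invariant: $\Phi(v_pv_p^\star)=T(p)T(p)^\star$ forces $\pi(v_p)^\star h=T(p)^\star h$. Hence $\overline{span}\{V(p)h\}$ is reducing, and you obtain precisely a minimal Nica-covariant dilation, which is the form in which the theorem holds.
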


\subsection{Dilating to $\mathcal{Q}(P)$ in the right LCM case}

The following lemma is straightforward, but we still include it for the sake of completeness:

\begin{lem}
    Let $P$ be a right LCM semigroup, and $V: P \to \B(\cH)$ be an isometric Nica-covariant representation of $P$. Furthermore, let $V$ satisfy the condition that for every foundation set $F \subset P$,
    $$\Pi_{f \in F}\left(1_\cH - V(f) V(f)^\star\right) = 0.$$
    Then, there exists a representation of $\mathcal{Q}(P)$, $\psi: \mathcal{Q}(P) \to \B(\cH)$ such that $(\psi\circ q)(v_p) = V(p)$ for all $p$, where $q: C^\star(P) \to \mathcal Q(P)$ is the canonical quotient map.
\end{lem}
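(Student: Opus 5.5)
The plan is to first produce a representation of the full semigroup $C^\star$-algebra $C^\star(P)$ from $V$, using its universal property (Definition \ref{defn: full semigroup C*-algebra}), and then check that it vanishes on the ideal defining $\mathcal{Q}(P)$.

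\textbf{Step 1: candidate projections.} Since $P$ is right LCM, every constructible right ideal is principal, so $\J = \{pP \;\vert\; p \in P\} \cup \{\emptyset\}$. I will set $E_\emptyset := 0$ and $E_{pP} := V(p)V(p)^\star$. This is well defined: if $pP = p'P$, then $p' = pu$ for some $u \in P^\star$, and $V(u)$ is unitary. This is the same computation as in the remark after Definition \ref{defn: Nica-covariant}. Each $E_{pP}$ is a projection because $V(p)$ is an isometry.

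\textbf{Step 2: the relations of Definition \ref{defn: full semigroup C*-algebra}.}
\begin{itemize}
\item (F1i) holds because $V$ is a representation.
\item (F2i) holds because $E_P = V(e)V(e)^\star = 1_\cH$, using unitality.
\item (F2ii) holds by construction.
\item (F2iii) is exactly Nica-covariance. We have $E_{pP}E_{qP} = E_{rP}$ when $pP \cap qP = rP$, and $E_{pP}E_{qP} = 0 = E_\emptyset$ when the intersection is empty. Intersections with $\emptyset$ are trivial.
\item (F1ii) follows from $V(p)E_{qP}V(p)^\star = V(pq)V(pq)^\star = E_{pqP}$, together with $p\emptyset = \emptyset$.
\end{itemize}
By the universal property we obtain a $\star$-homomorphism $\phi: C^\star(P) \to \B(\cH)$ with $\phi(v_p) = V(p)$ and $\phi(e_X) = E_X$.

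\textbf{Step 3: passing to the quotient.} By Definition \ref{defn: boundary quotient in terms of foundation sets}, $\mathcal{Q}(P) = C^\star(P)/I$, where $I$ is the closed two-sided ideal generated by the elements $\Pi_{f\in F}(1-e_{fP})$ for foundation sets $F$. We have
$$\phi\Bigl(\Pi_{f\in F}(1-e_{fP})\Bigr) = \Pi_{f\in F}\bigl(1_\cH - V(f)V(f)^\star\bigr) = 0$$
by hypothesis. Since $\ker\phi$ is a closed two-sided ideal containing all the generators of $I$, it contains $I$. Hence $\phi$ factors as $\phi = \psi\circ q$ for a $\star$-homomorphism $\psi: \mathcal{Q}(P) \to \B(\cH)$, and $(\psi\circ q)(v_p) = V(p)$.

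\textbf{Main obstacle.} The only real point of care is Step 2, specifically the well-definedness of $E_X$ when an ideal has several generators, and matching Nica-covariance to (F2iii). Both reduce to the unit computation already recorded in the paper. Everything else is formal use of universal properties.
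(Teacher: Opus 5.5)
Your proposal is correct and is essentially the paper's argument: the paper's proof says only that the lemma follows from the universal construction of $\mathcal{Q}(P)$. You have written out the checks this requires, namely well-definedness of $E_{pP}$ via units, the relations of Definition~\ref{defn: full semigroup C*-algebra} from Nica-covariance, and vanishing on the foundation-set ideal. One small remark: the unitality you invoke for (F2i) is automatic, because $V(e)$ is an idempotent isometry, so $V(e) = \bigl(V(e)^\star V(e)\bigr)V(e) = V(e)^\star V(e)^2 = V(e)^\star V(e) = 1_\cH$.
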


\begin{proof}
    The proof of this lemma follows from the universal construction of $\mathcal{Q}(P)$. 
\end{proof}

M. Laca and B. Li further extended this result to contractive representations using Theorem \ref{thm: Boyu's theorem}:

\begin{thm}[Theorem 6.1 and Corollary 6.7, \cite{LL22}]\label{thm: Laca Li dilating to Q(P)}
    Let $P$ be a right LCM semigroup, and $T: P \to \B(\cH)$ be a contractive representation of $P$. Furthermore, let the following two conditions hold:
    \begin{enumerate}
        \item[(i)] For every finite subset $F \subset P$, $Z(F) = \sum_{U \subseteq F} (-1)^{\lvert U \rvert} T(\vee U) T^\star(\vee U) \geq 0$, and
        \item[(ii)] For every foundation set $F \subset P$, $Z(F) = \sum_{U \subseteq F} (-1)^{\lvert U \rvert} T(\vee U) T^\star(\vee U) = 0$
    \end{enumerate}
    Then there exists a dilation $\pi:P \to \B(\K)$ of $T$ to some $\K \supseteq \cH$ such that $\pi$ induces a representation of $\mathcal{Q}(P)$: that is, there exists a representation $\psi: \mathcal{Q}(P) \to \B(\K)$ satisfying
    $$(\psi \circ q)(v_p) = \pi(p)\;\;\;\;\textrm{ for all } p \in P, \textrm{ and }$$
    $$\P_\cH (\psi \circ q) (v_p) \vert_\cH 
    =
    \P_\cH \pi(p)\vert_\cH
    =
    T(p). $$
\end{thm}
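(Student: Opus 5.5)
The strategy is to realise the dilation in two stages: first an isometric Nica-covariant dilation, then a further dilation that enforces the foundation-set relations, and finally to pass to $\mathcal{Q}(P)$ through the preceding Lemma.

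\emph{Step 1: isometric Nica-covariant dilation.} By hypothesis (i) and Theorem \ref{thm: Boyu's theorem}, $T$ admits an isometric Nica-covariant dilation. I would take the minimal one, $W:P\to\B(\K_0)$ with $\K_0\supseteq\cH$, obtained from the positive-definite kernel construction in \cite{Li19}. Because the dilation is $\star$-regular, its compressions have the concrete form
$$\P_\cH W(p)W(p)^\star W(q)\vert_\cH = T(p)T(p)^\star\text{-type products},$$
and, more to the point,
$$\P_\cH\,\Pi_{f\in F}\bigl(1-W(f)W(f)^\star\bigr)\vert_\cH = Z(F).$$
This second formula comes from expanding the product. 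Nica covariance turns each product of range projections into $W(\vee U)W(\vee U)^\star$, and $\star$-regularity compresses that term to $T(\vee U)T(\vee U)^\star$.

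\emph{Step 2: kill the foundation defect projections.} Set $Q_F=\Pi_{f\in F}(1-W(f)W(f)^\star)$; it is a projection. Hypothesis (ii) gives $\P_\cH Q_F\vert_\cH=0$, so $Q_F\cH=0$ and hence $\cH\perp \operatorname{ran}Q_F$ for every foundation set $F$. Let $\mathcal{N}$ be the closed span of the ranges $Q_F\K_0$ over all foundation sets $F$, and set $\K=\mathcal{N}^\perp\supseteq\cH$. I would then show that $\K$ is co-invariant for $W$, i.e.\ $W(p)^\star\K\subseteq\K$. Equivalently, $\mathcal{N}$ must be invariant under each $W(p)$.

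This invariance is the main obstacle. It requires the combinatorial fact that $W(p)Q_F W(p)^\star$ is dominated by a sum of projections $Q_{F'}$ with $F'$ foundation sets. My first attempt would use the identity $1-W(p)W(p)^\star+W(p)Q_FW(p)^\star\le Q_{pF}$-type estimates, together with the observation that $pF\cup\{\text{complement pieces}\}$ is again a foundation set. If that fails I would instead work in the spectrum $\Omega_P$: $\mathcal{N}$ is the spectral subspace of $C^\star(W)\cap D$ over the open $G$-invariant complement of $\partial\Omega_P$, and invariance there follows from invariance of $\partial\Omega_P$ (\cite{LL22}).

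\emph{Step 3: compress to $\K$ and apply the Lemma.} Once $\K$ is co-invariant, set $\pi(p)=\P_\K W(p)\vert_\K$. This is a representation dilating $T$. I would check that $\pi$ is still isometric and Nica-covariant. Co-invariance makes $\pi(p)^\star = W(p)^\star\vert_\K$, and the range projections of $W$ commute with $\P_\mathcal{N}$, because $\mathcal{N}$ is a spectral subspace for the commutative algebra they generate; this transfers both properties from $W$ to $\pi$. By construction $\Pi_{f\in F}(1-\pi(f)\pi(f)^\star)=Q_F\vert_\K=0$ for every foundation set $F$. The preceding Lemma then yields $\psi:\mathcal{Q}(P)\to\B(\K)$ with $\psi\circ q(v_p)=\pi(p)$ and $\P_\cH\pi(p)\vert_\cH=T(p)$.
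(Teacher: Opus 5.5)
The paper gives no argument of its own here; the result is quoted from \cite{LL22}. The natural proof keeps the minimal isometric Nica-covariant dilation $W$ and shows that it already satisfies the boundary relations.

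Your Step 1 is fine. The formula $\P_\cH Q_F\vert_\cH=Z(F)$ needs only Nica covariance and co-invariance of $\cH$, which holds for any minimal dilation. Steps 2--3, however, contain a genuine gap.

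\textbf{Invariance of $\mathcal N$ is never established.}
\begin{itemize}
\item Heuristic (a) fails as stated, because the complement of $pP$ need not be finitely coverable. In $\bN\times\bF_\infty^+$ every foundation set must contain some $(n,e)$. Hence for $p=(0,c_1)$, no set $pF\cup G$ with $G$ finite and $gP\cap pP=\emptyset$ is a foundation set.
\item Route (b) rests on the identification $\Omega_P\setminus\partial\Omega_P=\bigcup_F\{\chi:\chi(Q_F)=1\}$, together with invariance under the whole partial action. That is a substantive input you leave implicit.
\end{itemize}

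\textbf{Step 3 needs more than co-invariance.} A $W$-invariant $\mathcal N$ whose projection commutes with all range projections can still give a non-isometric compression to $\mathcal N^\perp$. Take $W$ the unilateral shift and $\mathcal N=\mathrm{ran}\,WW^\star$: the compression of $W$ to $\mathcal N^\perp=\C e_0$ is $0$. You would need $\mathcal N$ to be reducing.

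\textbf{The missing idea: for the minimal dilation, $\mathcal N=\{0\}$, so Steps 2--3 are unnecessary.}
\begin{enumerate}
\item Fix a foundation set $F$ and $p\in P$. If $pP\cap fP=r_fP$ with $r_f=pq_f$, Nica covariance gives $W(p)^\star W(f)W(f)^\star W(p)=W(q_f)W(q_f)^\star$. If $pP\cap fP=\emptyset$, this product is $0$.
\item The range projections commute with $W(p)W(p)^\star$, so $X\mapsto W(p)^\star XW(p)$ is multiplicative on the algebra they generate. Hence $W(p)^\star Q_FW(p)=Q_{F_p}$, where $F_p=\{q_f\}$.
\item $F_p$ is again a foundation set. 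Given $x$, pick $f$ with $pxP\cap fP\neq\emptyset$. Then $pxP\cap fP\subseteq pP\cap fP=pq_fP$, and left cancellation gives $xP\cap q_fP\neq\emptyset$.
\item For $h\in\cH$, Step 1 and hypothesis (ii) give $\lVert Q_FW(p)h\rVert^2=\langle Q_{F_p}h,h\rangle=\langle Z(F_p)h,h\rangle=0$.
\item Minimality, $\K_0=\overline{\mathrm{span}}\,W(P)\cH$, then forces $Q_F=0$ on all of $\K_0$.
\end{enumerate}
The preceding Lemma now applies to $W$ itself on $\K=\K_0$. No cutting down, spectral theory, or input about $\partial\Omega_P$ is needed.
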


We now formulate a dilation theorem for representations of Ore semigroups, that provides a necessary and sufficient condition for unitary dilations. 
\begin{thm}\label{thm: Ore semigroups dilation theorem for full boundary}
    Let $P$ be a right LCM right Ore semigroup, $T: P \to \B(\cH)$ be a representation. 
    Then, the following are equivalent. 
    \begin{enumerate}
        \item[(i)] $T$ has a dilation that induces a representation of $\mathcal Q(P)$. 
        \item[(ii)] $T$ has a unitary dilation. 
    \end{enumerate}
\end{thm}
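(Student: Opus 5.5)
The plan is to prove both implications by passing through the identification $\mathcal Q(P) \cong C^\star(G)$ from Proposition \ref{prop: Q(P) for left reversible}. Here $G = PP^{-1}$ is the group generated by $P$; it exists by Theorem \ref{thm: Ore embedding}, since $P$ is right Ore.

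For (i)$\Rightarrow$(ii), suppose $\pi: P \to \B(\K)$ is a dilation of $T$ with a representation $\psi: \mathcal Q(P) \to \B(\K)$ satisfying $\psi \circ q(v_p) = \pi(p)$. The proof of Proposition \ref{prop: Q(P) for left reversible} shows that each singleton $\{p\}$ is a foundation set, because left-reversibility gives $pP \cap xP \neq \emptyset$ for all $x$. Hence $1 - v_pv_p^\star = 0$ in $\mathcal Q(P)$, and the images $q(v_p)$ are unitaries. Applying the unital $\star$-homomorphism $\psi$, each $\pi(p)$ is a unitary. Moreover $\P_\cH \pi(p)\vert_\cH = T(p)$ by assumption, so $\pi$ is a unitary dilation of $T$.

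For (ii)$\Rightarrow$(i), let $U: P \to \B(\K)$ be a unitary dilation of $T$. The main step is to extend $U$ to a unitary representation of $G$. Since $G = PP^{-1}$, define $\tilde U(pq^{-1}) := U(p)U(q)^\star$. For well-definedness, suppose $pq^{-1} = p'q'^{-1}$. Use left-reversibility to pick $s,t$ with $qs = q't$; then $ps = p't$ in $G$, hence in $P$ by the embedding. Using $U(q)^\star = U(s)U(qs)^\star$, which holds because $U$ is multiplicative and unitary, we get $U(p)U(q)^\star = U(ps)U(qs)^\star = U(p't)U(q't)^\star = U(p')U(q')^\star$.

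Multiplicativity of $\tilde U$ is checked the same way, and this is the step I expect to need the most care. The key observation is that unitary representations of a cancellative semigroup commute appropriately with adjoints. For a product $(pq^{-1})(rs^{-1})$, choose $a,b$ with $qa = rb$, so the product equals $(pa)(sb)^{-1}$. Then $U(q)^\star U(r) = U(a)U(qa)^\star U(rb)U(b)^\star = U(a)U(b)^\star$, which gives the required identity. Alternatively, I would simply cite the standard fact that a unitary representation of a right Ore semigroup extends uniquely to its enveloping group of fractions.

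Once $\tilde U$ is a unitary representation of $G$, the universal property of $C^\star(G)$ gives a representation of $C^\star(G)$ on $\K$. Composing it with the isomorphism $\mathcal Q(P) \cong C^\star(G)$ yields $\psi: \mathcal Q(P) \to \B(\K)$ with $\psi(q(v_p)) = U(p)$. For this I must check that the isomorphism of Proposition \ref{prop: Q(P) for left reversible} sends $q(v_p)$ to $u_p$, which is evident from its proof, since the generating unitaries correspond. Then $U$ itself is the dilation required in (i).
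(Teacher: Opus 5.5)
Your proposal is correct and follows the paper's proof: both directions go through the identification $\mathcal Q(P)\cong C^\star(G)$ of Proposition~\ref{prop: Q(P) for left reversible}. Unitarity of the generators $q(v_p)$ gives (i)$\Rightarrow$(ii), and extending a unitary representation of $P$ to $G=PP^{-1}$ gives (ii)$\Rightarrow$(i); the only difference is that you explicitly verify the well-definedness and multiplicativity of $\tilde U(pq^{-1})=U(p)U(q)^\star$, which the paper simply asserts.
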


\begin{proof}
    For $(i) \implies (ii)$, recall that for right Ore semigroups, by Proposition \ref{prop: Q(P) for left reversible} $\mathcal Q(P) \cong C^\star(G)$ by the canonical isomorphism that sends $q(v_p)$ to $u_p$, where $q:C^\star(P) \to \mathcal Q(P)$ is the canonical quotient map, and 
    $u_g$ for $g \in G$ are the (unitary) generators of $C^\star(G)$. 
    Hence, if $\pi: P \to \B(\K)$ is a dilation of $T$ (i.e. $\K \supseteq \cH$ and $T(p) = \P_\cH \pi(p) \vert_\cH$ for all $p \in P$) that induces a representation $\psi: \mathcal Q(P) \to \B(\K)$ satisfying $(\psi \circ q) (v_p) = \pi(p)$, then, as $q(v_p)$ is unitary ($u_p$ being unitary), so is $\pi(p)$. 

    For $(ii) \implies (i)$, consider a unitary dilation $\pi: P \to \B(\K)$ with $\K \supseteq \cH$ and $T(p) = \P_\cH \pi(p)\vert_\cH$ for all $p \in P$. 
    Since $P$ is Ore, $\pi$ extends to a representation of the enveloping group $G$ as well, and consequently a representation of $C^\star(G)$. 
    As $C^\star(G) \cong \mathcal Q(P)$ by Proposition \ref{prop: Q(P) for left reversible}, we have a representation of $\mathcal Q(P)$. 
\end{proof}

And now we 
formulate the counterparts of these results for $\partial C^\star_\lambda(P)$ using the Dritschel-McCullough dilation machine, and show that though our results have a very different flavour, they are stronger in many cases even when the full and reduced boundary quotients agree. 

\subsection{Dilating to $\partial C^\star_\lambda(P)$
}

The following are the main results of our paper. We begin with our generalisation of the matricial von Neumann's inequality. 

\begin{defn}\label{defn: extension property (A)}
    Let $P$ be a semigroup, and $T: P \to \B(\cH)$ be a representation of $P$.
    We say that $T$ has the \textit{extension property (A)} if it extends to a completely contractive representation of $\A_\lambda(P)$. 
    That is, $T:P \to \B(\mathcal{H})$ has extension property (A) iff there exists a completely contractive representation $\phi: \A_\lambda(P) \to \B(\mathcal{H})$ such that $\phi(V_p) = T(p)$ for all $p \in P$. 
\end{defn}

Note that since the algebra $\setof{V_p \; \vert \; p \in P}$ is dense in $\A_\lambda(P)$, there is only one way to extend a representation of $P$ to one of $\A_\lambda(P)$ mapping $V_p$ to $T(p)$. 

\begin{thm}\label{thm: every completely contractive representation of the algebra also extends to the boundary}
    Let $P$ be a semigroup that is group-embeddable or right LCM cancellative, and $\phi: \A_\lambda(P) \to \B(\cH)$ be a 
    representation. 
    Then, the following conditions are equivalent:
    \begin{enumerate}
        \item[(i)] $\phi$ is completely contractive.
        \item[(ii)] $\phi$ has a dilation $\pi: \A_\lambda(P) \to \B(\K)$ of $\phi$ to some $\K \supseteq \cH$ such that $\pi$ canonically induces a representation of $\partial C^\star_\lambda (P)$. 
    That is, we have a representation $\psi: \partial C^\star_\lambda (P) \to \B(\mathcal{K})$ with the property that if $q: C^\star_\lambda(P) \to \partial C^\star_\lambda(P)$ is the canonical quotient map, then 
    $$(\psi \circ q) (V_p) = \pi(V_p) \;\;\;\;\textrm{ for all } p \in P,$$
    where $V_p$ is the left regular representation of $p \in P$. 
    Moreover, since $\pi$ is a dilation of $\phi$, 
    $$\P_\cH (\psi \circ q)(V_p) \vert_\cH 
    =
    \P_\cH \pi(V_p)\vert_\cH
    =
    \phi(V_p)  \;\;\;\;\textrm{ for all } p \in P. $$
    \end{enumerate}
\end{thm}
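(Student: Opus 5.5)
For (i) $\implies$ (ii), I will run the Dritschel--McCullough machinery and then identify the target via Theorem \ref{thm: boundary quotient is isomorphic to C*-envelope}. Let $\phi$ be completely contractive. By Theorem \ref{thm: Dritschel-McCoulough theorem}, $\phi$ has a maximal dilation $\pi: \A_\lambda(P) \to \B(\K)$ with $\K \supseteq \cH$ and $\P_\cH \pi(a)\vert_\cH = \phi(a)$ for all $a$.

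The next step is to pass from $\pi$ to a representation of the $C^\star$-envelope. Take the $C^\star$-cover $(C^\star_{env}(\A_\lambda(P)), \ci)$. By Theorem \ref{thm: Unique Extension Property}, maximality of $\pi$ gives a $\star$-representation $\tilde\psi: C^\star_{env}(\A_\lambda(P)) \to \B(\K)$ with $\tilde\psi \circ \ci = \pi$. Theorem \ref{thm: boundary quotient is isomorphic to C*-envelope} supplies an isomorphism $\Theta: \partial C^\star_\lambda(P) \to C^\star_{env}(\A_\lambda(P))$ identifying the canonical generating isometries, i.e. $\Theta(q(V_p)) = \ci(V_p)$. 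Setting $\psi := \tilde\psi \circ \Theta$ gives
\[
(\psi\circ q)(V_p) = \tilde\psi(\ci(V_p)) = \pi(V_p).
\]
The compression identity is then just the dilation property of $\pi$.

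The point needing care is that the isomorphism really matches $q(V_p)$ with $\ci(V_p)$, where $q$ is the canonical map $C^\star_\lambda(P)\to\partial C^\star_\lambda(P)$. Since $(C^\star_\lambda(P), \mathrm{id})$ is a $C^\star$-cover, the co-universal property in Definition \ref{defn: C*-cover, C*-envelope} yields a $\star$-epimorphism $C^\star_\lambda(P) \to C^\star_{env}(\A_\lambda(P))$ fixing $\A_\lambda(P)$. Composing it with $\Theta^{-1}$ gives a map agreeing with $q$ on the generators $V_p$, hence equal to $q$. In the right LCM non-group-embeddable case I would simply cite \cite{BBD26} for the same compatibility.

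For (ii) $\implies$ (i), observe that $\psi\circ q$ is a $\star$-homomorphism, hence completely contractive. Its restriction to $\A_\lambda(P)$ agrees with $\pi$ on each $V_p$, so by density and continuity it agrees with $\pi$ on all of $\A_\lambda(P)$. Thus $\pi$ is completely contractive. Finally, $\phi(a) = \P_\cH \pi(a)\vert_\cH$ is a compression of a completely contractive map, so $\phi$ is completely contractive.

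The main obstacle is purely bookkeeping: making sure the identification of the envelope with $\partial C^\star_\lambda(P)$ is compatible with the canonical quotient map $q$. The operator-theoretic content is entirely supplied by the cited theorems.
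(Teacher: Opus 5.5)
Your proposal is correct and follows the paper's proof. For (i)$\implies$(ii) you take a Dritschel--McCullough maximal dilation, use the unique extension property to obtain a representation of $C^\star_{env}(\A_\lambda(P))$, and transport it through the isomorphism with $\partial C^\star_\lambda(P)$. For the converse you use complete contractivity of $\star$-homomorphisms together with density. The one small difference is in (ii)$\implies$(i): you restrict $\psi\circ q$ directly to $\A_\lambda(P)\subseteq C^\star_\lambda(P)$, whereas the paper routes through $\psi\circ\Psi^{-1}\circ\iota$. Your version shows that this direction does not need the envelope identification at all.
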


\begin{proof}
    For the subsequent considerations, we shall abbreviate $\A_\lambda(P)$ as simply $\A_\lambda$ when there is no ambiguity regarding $P$. 
    First we prove $(i) \implies (ii)$. 
    By Theorem \ref{thm: Dritschel-McCoulough theorem}, $\phi$ must have a maximal dilation $\pi: \A_\lambda \to \B(\mathcal{K})$ for some $\mathcal{K} \supseteq \cH$. 
    Since $\pi$ is a maximal representation, by Theorem \ref{thm: Unique Extension Property}, it must also have the unique extension property (though we do not need uniqueness here): 
    given the $C^\star$-envelope $(C^\star_{env}(\A_\lambda), \iota)$, there exists a representation $\Pi: C^\star_{env}(\A_\lambda) \to \B(\mathcal{K})$ such that $\Pi\circ\iota = \pi$, and the only completely positive map which agrees with $\Pi$ when restricted to $\iota(\A_\lambda)$ is $\Pi$ itself. 
    This yields that 
    $$\P_\cH (\Pi \circ \iota)(V_p) \vert_\cH 
    =
    \P_\cH \pi(V_p)\vert_\cH
    =
    \phi(V_p). $$
    Now consider the isomorphism $\Psi: \partial C^\star_\lambda (P) \to C^\star_{env}(\A_\lambda(P))$, that maps $q(V_p)$ to $\iota(V_p)$ by Theorem \ref{thm: boundary quotient is isomorphic to C*-envelope}. 
    Then, $\psi = \Pi \circ \Psi$ is a representation of $\partial C^\star_\lambda(P)$ on $\mathcal{K}$ that satisfies 
    $$(\psi \circ q)(V_p) = (\Pi \circ \Psi \circ q) (V_p) = (\Pi \circ \iota)(V_p) = \pi(V_p).$$

    For $(ii) \implies (i)$, note that 
    $\psi \circ \Psi^{-1}: C^\star_{env}(\A_\lambda) \to \B(\mathcal K)$ is a representation that maps $\iota (V_p)$ to $(\psi \circ q) (V_p) = \pi(V_p)$. 
    Moreover, since $\psi \circ \Psi^{-1}$ is a representation of a $C^\star$-algebra, it is completely contractive as well \cite{Pa02}. 
    Since $\iota$ is a complete isometry, 
    $\psi \circ \Psi^{-1} \circ \iota : \A_\lambda \to \B(\mathcal K)$ turns out to a complete contraction that agrees with $\pi$ on the span of $\{V_p \; \vert \; p \in P\}$, which is a dense subalgebra of $\A_\lambda$. 
    This implies that $\pi$ is a complete contraction, and $\phi$ satisfying that $\phi(V_p) = \P_\cH \pi (V_p) \vert_\cH$ must be a complete contraction as well. 
    This concludes the proof. 
\end{proof}

\begin{thm}\label{thm: representations with property (A) extend to the boundary}
    Let $P$ be a semigroup that is group-embeddable or right LCM cancellative, and let $T : P \to \B(\cH)$ be a representation. 
    Then, the following conditions are equivalent:
    \begin{enumerate}
        \item[(i)] $T$ has extension property (A), that is, $T$ extends to a completely contractive representation of $\A_\lambda(P)$. 
        \item[(ii)] $T$ has a dilation $\pi: P \to \B(\mathcal{K})$ for some $\mathcal{K} \supseteq \cH$ that induces a representation of $\partial C^\star_\lambda(P)$. 
        That is, we have a representation $\psi: \partial C^\star_\lambda (P) \to \B(\mathcal{K})$ with the property that if $q: C^\star_\lambda(P) \to \partial C^\star_\lambda(P)$ is the canonical quotient map, then 
        $$(\psi \circ q) (V_p) = \pi(p) \;\;\;\;\textrm{ for all } p \in P.$$
        Moreover, since $\pi$ is a dilation of $T$, 
        $$\P_\cH {\psi}(q(V_p) \vert_\cH 
        =
        \P_\cH \pi(p)\vert_\cH
        =
        T(p)  \;\;\;\;\textrm{ for all } p \in P. $$
    \end{enumerate}
\end{thm}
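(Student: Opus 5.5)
The plan is to reduce this statement directly to Theorem \ref{thm: every completely contractive representation of the algebra also extends to the boundary}. The only work is to pass between representations of the semigroup $P$ and representations of the operator algebra $\A_\lambda(P)$. This passage is governed by the remark after Definition \ref{defn: extension property (A)}: since the algebra generated by $\setof{V_p \st p \in P}$ is dense in $\A_\lambda(P)$, a bounded representation of $\A_\lambda(P)$ is determined by its values on the $V_p$.

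For $(i)\implies(ii)$, assume $T$ has extension property (A). Let $\phi:\A_\lambda(P)\to\B(\cH)$ be the completely contractive representation with $\phi(V_p)=T(p)$. Apply Theorem \ref{thm: every completely contractive representation of the algebra also extends to the boundary}, $(i)\implies(ii)$, to $\phi$. This gives a dilation $\pi':\A_\lambda(P)\to\B(\K)$ and a representation $\psi:\partial C^\star_\lambda(P)\to\B(\K)$ with $(\psi\circ q)(V_p)=\pi'(V_p)$. Now define $\pi:P\to\B(\K)$ by $\pi(p):=\pi'(V_p)$. This is a semigroup representation because $V_pV_{p'}=V_{pp'}$ and $\pi'$ is multiplicative. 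It dilates $T$, since $\P_\cH\pi(p)\vert_\cH=\phi(V_p)=T(p)$. It also satisfies $(\psi\circ q)(V_p)=\pi(p)$, which is exactly $(ii)$.

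For $(ii)\implies(i)$, start with $\pi$ and $\psi$ as in $(ii)$. Let $\Psi:\partial C^\star_\lambda(P)\to C^\star_{env}(\A_\lambda(P))$ be the isomorphism of Theorem \ref{thm: boundary quotient is isomorphic to C*-envelope}, and let $\iota$ be the embedding of $\A_\lambda(P)$ into its $C^\star$-envelope. Set $\tilde\pi:=\psi\circ\Psi^{-1}\circ\iota:\A_\lambda(P)\to\B(\K)$. This is a composition of a complete isometry with a $\star$-representation, so it is a completely contractive representation. It satisfies $\tilde\pi(V_p)=(\psi\circ q)(V_p)=\pi(p)$.

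The main obstacle is the next step, defining the compression $\phi(a):=\P_\cH\tilde\pi(a)\vert_\cH$. This map is completely contractive and agrees with $T$ on the generators, but it must also be \emph{multiplicative}. Here I would use the hypothesis that $\pi$ dilates the semigroup representation $T$. Since $T$ is multiplicative, $\P_\cH\tilde\pi(V_pV_{p'})\vert_\cH=T(pp')=T(p)T(p')=\phi(V_p)\phi(V_{p'})$. Linearity then extends this to the dense subalgebra generated by the $V_p$, and continuity of the compression extends it to all of $\A_\lambda(P)$. Hence $\phi$ is a completely contractive representation extending $T$, which is property (A). This is the same argument used in the proof of $(ii)\implies(i)$ of Theorem \ref{thm: every completely contractive representation of the algebra also extends to the boundary}; the only change is that multiplicativity of the compression now comes from $T$ being a representation rather than being assumed for $\phi$.
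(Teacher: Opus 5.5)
Your proposal is correct and follows the paper's route: the paper also proves this by identifying $T$ with a representation of the dense subalgebra spanned by the $V_p$ and then invoking Theorem \ref{thm: every completely contractive representation of the algebra also extends to the boundary}. Your version additionally spells out what the paper's one-line identification leaves implicit in $(ii)\implies(i)$: the compression $\P_\cH\tilde\pi(\cdot)\vert_\cH$ is multiplicative because $T$ is a semigroup representation.
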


\begin{proof}
    Note that we may identify a representation $T: P \to \B(\cH)$ with a representation of the algebra generated by $\{V_p \: \vert \: p \in P\}$, which is a dense subalgebra of $\A_\lambda(P)$. 
    The rest follows from an application of Theorem \ref{thm: every completely contractive representation of the algebra also extends to the boundary}. 
\end{proof} 

These two theorems very directly lead us to the following dilation results:
\begin{thm}\label{thm: Ore semigroup representations with property (A) have unitary dilations}
    Let $P$ be a right LCM right Ore semigroup, and $T: P \to \B(\cH)$ be a representation. 
    Then, consider the following conditions:
    \begin{enumerate}
        \item[(i)] $T$ satisfies extension property (A).
        \item[(ii)] $T$ has a unitary dilation. 
    \end{enumerate}
    It is always true that $(i) \implies (ii)$. 
    If 
    $P$ is amenable, then $(ii) \implies (i)$ as well. 
\end{thm}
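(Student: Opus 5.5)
For $(i) \implies (ii)$, I will apply Theorem \ref{thm: representations with property (A) extend to the boundary} directly. A right Ore semigroup is cancellative and embeds into $G = PP^{-1}$ by Theorem \ref{thm: Ore embedding}, so $P$ is group-embeddable and the hypotheses are met. Hence $T$ has a dilation $\pi: P \to \B(\K)$ together with a representation $\psi: \partial C^\star_\lambda(P) \to \B(\K)$ satisfying $(\psi \circ q)(V_p) = \pi(p)$. By Theorem \ref{thm: boundary quotients for right-Ore semigroups}, $\partial C^\star_\lambda(P) \cong C^\star_{env}(\A_\lambda(P)) \cong C^\star_\lambda(G)$, via canonical maps sending $q(V_p)$ to the unitary $\lambda_p$. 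So $\pi(p) = \psi(q(V_p))$ is the image of a unitary under a unital $\star$-representation, and is therefore unitary. This makes $\pi$ a unitary dilation of $T$. The one point to check is that the isomorphisms of Theorems \ref{thm: boundary quotient is isomorphic to C*-envelope} and \ref{thm: boundary quotients for right-Ore semigroups} really identify the generators; both theorems state this.

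For $(ii) \implies (i)$ under amenability, let $\pi: P \to \B(\K)$ be a unitary dilation. Since $P$ is Ore, $\pi$ extends to a unitary representation of $G = PP^{-1}$ by $\pi(pq^{-1}) = \pi(p)\pi(q)^\star$. Well-definedness and multiplicativity follow from the Ore condition in the standard way, or equivalently from the universal property of the enveloping group. This gives a $\star$-representation $\rho$ of the full group $C^\star$-algebra $C^\star(G)$.

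The key step is to get from $C^\star(G)$ to $C^\star_\lambda(G)$. Here I use amenability. I expect that left-amenability of the Ore semigroup $P$ implies amenability of $G = PP^{-1}$, and I would try to prove this first. The plan is to push the invariant mean from $\ell_\infty(P)$ to $\ell_\infty(G)$: for $f \in \ell_\infty(G)$, average the restrictions of its translates to $P$, using that every finite subset of $G$ can be written as $Fs^{-1}$ with a common denominator $s$, which left-reversibility provides. I expect this to be the main obstacle. The left/right conventions between left-amenability and the right Ore condition must match, and if the direct construction fails I would cite the known fact that $G$ is amenable when an Ore semigroup generating it is amenable. Granting this, $C^\star(G) = C^\star_\lambda(G)$, and $\rho$ factors through $C^\star_\lambda(G) \cong C^\star_{env}(\A_\lambda(P))$. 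Restricting to $\iota(\A_\lambda(P))$, which is a completely isometric copy of $\A_\lambda(P)$ whose generators $\iota(V_p)$ correspond to $\lambda_p$, gives a completely contractive representation of $\A_\lambda(P)$ that sends $V_p$ to $\pi(p)$.

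Finally, compressing to $\cH$ preserves complete contractivity. The compression is multiplicative on the dense subalgebra spanned by the $V_p$, because $p \mapsto \P_\cH \pi(p)\vert_\cH = T(p)$ is a representation of $P$. It is also continuous, so it is a completely contractive representation of $\A_\lambda(P)$ extending $T$, which is property (A). Alternatively, once $\pi$ is known to be completely contractive on $\A_\lambda(P)$, this last step is exactly the compression argument already used in the proof of $(ii)\implies(i)$ of Theorem \ref{thm: every completely contractive representation of the algebra also extends to the boundary}.
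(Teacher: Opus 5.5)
Your proposal is correct and is essentially the paper's proof. For (i)$\Rightarrow$(ii) you use the boundary dilation together with the canonical identification $\partial C^\star_\lambda(P)\cong C^\star_\lambda(G)$ sending $q(V_p)$ to $\lambda_p$; for (ii)$\Rightarrow$(i) you extend $\pi$ to $G=PP^{-1}$, use amenability to pass from $C^\star(G)$ to $C^\star_\lambda(G)$, and then inline the argument of Theorem~\ref{thm: every completely contractive representation of the algebra also extends to the boundary}, where the paper simply cites that theorem.

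The amenability step you flag as the main obstacle is only asserted in the paper, and it is easier than you expect. If $\mu$ is a left-invariant mean on $\ell_\infty(P)$, then $f\mapsto\mu(f\vert_P)$ on $\ell_\infty(G)$ is invariant under left translation by each $p\in P$, hence by each $p^{-1}$, hence by all of $G$. So no common-denominator or Ore argument is needed.
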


\begin{proof}
    First we prove $(i) \implies (ii)$. 
    Assume $T$ has extension property (A). 
    Then, by Theorem \ref{thm: representations with property (A) extend to the boundary}, we have a dilation $\pi: P \to \B(\K)$ where $\K \supseteq \cH$ and $T(p) = \P_\cH \pi(p)\vert_\K$ such that $\pi(p) = ( \psi \circ q ) (V_p)$ where $\psi:\partial C^\star_\lambda(P) \to \B(\K)$ where $\pi$ is a representation of $\partial C^\star_\lambda(P)$. 
    Now, since $P$ is Ore, recall from Theorem \ref{thm: boundary quotients for right-Ore semigroups} that $\partial C^\star_\lambda(P) \cong C^\star_\lambda(G)$ by the canonical isomorphism sending $q(V_p)$ to $\lambda_p$, where $G$ is the group generated by $P$ as $G= PP^{-1}$, and $\lambda_g$ is the left-regular representation of $g \in G$. 
    Since $\lambda_g$ is a unitary, so must be $q(V_p)$, and since $\psi$ is a representation of a $C^\star$-algebra, we conclude that $\pi(p)$ is a unitary as well. 

    For $(ii) \implies (i)$, consider a unitary dilation $\pi: P \to \B(\K)$ with $\K \supseteq \cH$ and $T(p) = \P_\cH \pi(p)\vert_\cH$ for all $p \in P$. 
    Since $P$ is Ore, $\pi$ induces a representation of the enveloping group $G$ as well, and consequently a representation of $C^\star(G)$. 
    When $G$ is amenable, $C^\star(G) \cong C^\star_\lambda(G)$. 
    Since $P$ is right Ore, $G = PP^{-1}$ is amenable if and only if $P$ is left-amenable. 
    Moreover, the Ore condition implies that $C^\star_\lambda(G) \cong \partial C^\star_\lambda(P)$ by Theorem \ref{thm: boundary quotients for right-Ore semigroups}. 
    Hence, by Theorem \ref{thm: representations with property (A) extend to the boundary}, we have that $T$ satisfies extension property (A). 
\end{proof}


One must mention that extension property (A) is not automatically guaranteed, that is, not every representation has this property. 
An example of a contractive representation of $\bN^3$ that fails to satisfy extension property (A) was provided in \cite{Pa70}, through a triple of commuting matrices which fail to satisfy a von Neumann inequality, and hence do not admit a unitary dilation. 
An interesting question now is to consider which semigroup representations have extension property (A). 

\begin{prop}\label{prop: representations where C*-algebras agree have property (A)}
    Let $P$ be a group-embeddable or right LCM cancellative semigroup, such that $C^\star_s(P) \cong C^\star_\lambda(P)$. 
    Then, every constructible representation of $P$ has extension property (A). 
\end{prop}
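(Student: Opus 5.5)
A constructible representation $T : P \to \B(\cH)$ is, by Definition \ref{defn: constructible representation}, an isometric representation that induces a $\star$-representation $\rho_T : C^\star_s(P) \to \B(\cH)$ with $\rho_T(v_p) = T(p)$ for all $p \in P$. The plan is to push $\rho_T$ through the hypothesised isomorphism to obtain a $\star$-representation of $C^\star_\lambda(P)$. We then restrict it to $\A_\lambda(P) \subseteq C^\star_\lambda(P)$. Every $\star$-homomorphism between $C^\star$-algebras is completely contractive \cite{Pa02}, and so is any restriction of one to a subalgebra, so this restriction will be a completely contractive representation of $\A_\lambda(P)$ extending $T$. That is exactly extension property (A) in Definition \ref{defn: extension property (A)}.

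The concrete steps are as follows. First, interpret the hypothesis $C^\star_s(P) \cong C^\star_\lambda(P)$ as saying that the canonical left-regular map $\lambda_s : C^\star_s(P) \to C^\star_\lambda(P)$, with $v_p \mapsto V_p$, is an isomorphism. Second, set $\sigma := \rho_T \circ \lambda_s^{-1} : C^\star_\lambda(P) \to \B(\cH)$. This is a $\star$-homomorphism satisfying $\sigma(V_p) = T(p)$. Third, set $\phi := \sigma\vert_{\A_\lambda(P)}$. It is multiplicative and completely contractive, and it agrees with $T$ on the generators $V_p$. By the remark following Definition \ref{defn: extension property (A)}, it is the unique such extension.

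The main obstacle is the first step. An abstract isomorphism $C^\star_s(P) \cong C^\star_\lambda(P)$ need not be the canonical one. I would either read the hypothesis as the canonical statement, as in Theorem \ref{thm: lambda isomorphism from cancellative} and the amenability discussion, where it is always meant this way, or else note that only the canonical map gives $\sigma(V_p) = T(p)$. A further point concerns the right LCM cancellative but non-group-embeddable case, where $C^\star_s(P)$ is formally defined only for group-embeddable $P$. There I would take constructible representations to mean those factoring through $C^\star(P)$, i.e.\ Nica-covariant isometric representations. The same argument then runs with $\lambda : C^\star(P) \to C^\star_\lambda(P)$ in place of $\lambda_s$.
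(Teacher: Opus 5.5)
Your proof is correct and follows the paper's own argument: pass the induced representation of $C^\star_s(P)$ through the isomorphism to $C^\star_\lambda(P)$, then restrict the resulting $\star$-representation to $\A_\lambda(P)$ to get a completely contractive extension of $T$. Your two extra remarks are points the paper leaves implicit: that the isomorphism must be the canonical map $\lambda_s$ so that $V_p \mapsto T(p)$, and that in the non-group-embeddable right LCM case constructibility should be read through $C^\star(P)$.
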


\begin{proof}
    By Definition \ref{defn: constructible representation}, every constructible representation of $P$ extends to a representation of $C^\star_s(P)$. 
    Since $C^\star_s(P) \cong C^\star_\lambda(P)$, this is also a representation of $C^\star_\lambda(P)$. 
    Since representations of $C^\star$-algebras are completely contractive, restricting to the operator algebra $\A_\lambda(P)$, we get a completely contractive representation, which is an extension of the representation of $P$ that we started with.  
\end{proof}

\begin{cor}\label{cor: amenable semigroups, constructible representations, and property (A)}
    Let $P$ be a group-embeddable or right LCM cancellative semigroup. 
    If $P$ is left-amenable, then every constructible representation has extension property (A). 
    If $P$ is right LCM and Nica-amenable, then every representation satisfying the condition in Inequality \ref{eq: Boyu's condition for isometric Nica-covariant dilation} has extension property (A). 
\end{cor}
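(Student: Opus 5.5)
The plan is to reduce both assertions to Proposition \ref{prop: representations where C*-algebras agree have property (A)}. For the first assertion, let $P$ be group-embeddable and left-amenable. By the theorem quoted from Statements 4.1 of \cite{Li12a}, left-amenability implies that the left-regular representation $\lambda_s: C^\star_s(P) \to C^\star_\lambda(P)$ is an isomorphism. Hence $C^\star_s(P) \cong C^\star_\lambda(P)$ canonically, with $v_p \mapsto V_p$.

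Proposition \ref{prop: representations where C*-algebras agree have property (A)} then gives the first claim directly. A constructible representation extends to $C^\star_s(P)$, hence to $C^\star_\lambda(P)$. Restricting this $\star$-representation to $\A_\lambda(P)$ gives a completely contractive extension.

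In the right LCM cancellative case without group embedding, $C^\star_s(P)$ is not defined, so I would read ``constructible'' there as inducing a representation of $C^\star(P)$. Since $\J$ is independent for right LCM semigroups, Theorem \ref{thm: lambda isomorphism from cancellative} gives that $\lambda: C^\star(P) \to C^\star_\lambda(P)$ is an isomorphism. The same restriction argument then applies.

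For the second assertion, let $P$ be right LCM and Nica-amenable, and let $T$ satisfy Inequality \ref{eq: Boyu's condition for isometric Nica-covariant dilation}. The argument has three steps.
\begin{enumerate}
\item[(1)] By Theorem \ref{thm: Boyu's theorem}, $T$ has an isometric Nica-covariant dilation $V: P \to \B(\K)$ with $\K \supseteq \cH$ and $T(p) = \P_\cH V(p)\vert_\cH$.
\item[(2)] Show that $V$ induces a representation of the full semigroup $C^\star$-algebra $C^\star(P)$ with $v_p \mapsto V(p)$. Since $\J = \{pP\}$ in the right LCM case, set $e_{pP} := V(p)V(p)^\star$ and $e_\emptyset := 0$, and verify (F1i)--(F2iii). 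Relation (F1i) holds because $V$ is a representation. (F2i) holds with $e_P = V(e)V(e)^\star = 1$, using unitality. (F2iii) is exactly Nica covariance. For (F1ii), $V(p)V(q)V(q)^\star V(p)^\star = V(pq)V(pq)^\star$.
\item[(3)] By Nica-amenability (Definition \ref{defn: Nica-amenable semigroup}), $C^\star(P) \cong C^\star_\lambda(P)$ via $\lambda$. So $V$ yields a $\star$-representation $\rho$ of $C^\star_\lambda(P)$ with $\rho(V_p) = V(p)$. Its restriction to $\A_\lambda(P)$ is completely contractive, and compressing to $\cH$ gives $\phi(a) = \P_\cH \rho(a)\vert_\cH$.
\end{enumerate}

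The remaining point is that this compression $\phi$ is multiplicative, so that it is a representation extending $T$. It is a complete contraction, being a compression of one, and it satisfies $\phi(V_p) = T(p)$.

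The main obstacle is multiplicativity. On generators, $\P_\cH V(p)V(q)\vert_\cH = T(pq) = T(p)T(q)$ because $T$ is a representation and $V$ dilates it. Thus $\phi$ agrees on the dense subalgebra spanned by $\{V_p\}$ with the homomorphic extension of $T$, which is therefore bounded. By continuity it is a completely contractive representation of $\A_\lambda(P)$.

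If one prefers a structural argument, Sarason's lemma can be invoked instead. Since $\rho\vert_{\A_\lambda}$ is a homomorphism whose compressions to $\cH$ are multiplicative on a dense set, $\cH$ is semi-invariant. Either route completes the proof.
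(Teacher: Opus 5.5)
Your proposal is correct and follows the paper's route. Both arguments reduce to the proposition that a $C^\star$-isomorphism onto $C^\star_\lambda(P)$ makes every constructible representation extend completely contractively to $\A_\lambda(P)$, and in the Nica-amenable case both pass through Boyu Li's isometric Nica-covariant dilation.

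You also supply several points the paper's short proof leaves implicit:
\begin{enumerate}
\item[(1)] You check that an isometric Nica-covariant representation induces a representation of $C^\star(P)$.
\item[(2)] You show that the compression back to $\cH$ is multiplicative. This is needed because the proposition applies to the dilation, not to $T$ itself.
\item[(3)] For group-embeddable $P$ you use Li's implication giving $C^\star_s(P)\cong C^\star_\lambda(P)$. The full-algebra theorem the paper cites there assumes $\J$ is independent, which is not automatic for group-embeddable $P$.
\end{enumerate}
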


\begin{proof}
    The proof in the left-amenable case follows from Theorem \ref{thm: lambda isomorphism from cancellative} and Proposition \ref{prop: representations where C*-algebras agree have property (A)}. 
    If $P$ is Nica-amenable, it follows from Definition \ref{defn: Nica-amenable semigroup} that every constructible representation extends to a representation of $C^\star_\lambda(P)$. 
    Now, the constructible representations are precisely the isometric Nica-covariant representations (Definition \ref{defn: Nica-covariant}), and it follows from Theorem \ref{thm: Boyu's theorem} that any representation satisfying the Inequality \ref{eq: Boyu's condition for isometric Nica-covariant dilation} admits an isometric Nica-covariant dilation. 
    The rest follows from Proposition \ref{prop: representations where C*-algebras agree have property (A)}. 
\end{proof}

\begin{rem}
    We would like to point out that the extension property (A) of a representation of a semigroup 
    holds in fact in much more general conditions than those outlined in Proposition \ref{prop: representations where C*-algebras agree have property (A)} and Corollary \ref{cor: amenable semigroups, constructible representations, and property (A)}. 
    However, complete contractivity of a representation of an operator algebra is quite difficult to verify in general. 
    A slightly easier problem to address might be to check contractivity instead of complete contractivity. 
    In Theorem 3.9, \cite{Pa02} shows that if the $\phi$ is a representation of an operator algebra $\A$, and $C^\star(\phi(\A))$ is commutative, then $\norm{\phi} = \norm{\phi}_{CB}$. 
    This in turn implies that if the representation of a semigroup is doubly commuting, then we only need to check contractivity of its extension to $\A_\lambda(P)$ for extension property (A). 
    This also includes the one-dimensional representations. 
\end{rem}

We now provide an alternate characterisation of extension property (A) using von Neumann inequalities, and demonstrate that our extension property (A) may be viewed as a generalisation of matricial von Neumann inequalities.

\begin{defn}\label{defn: von Neumann inequality}
    Let $T_1, T_2, \dots T_k$ be $k$ commuting contractions on a Hilbert space $\cH$. 
    We say that the $k$-tuple $(T_1, T_2, \dots T_k)$ satisfies a \textit{von Neumann inequality} if for every polynomial $p$ in $k$ variables, we have the inequality:
    $$\norm{p(T_1, T_2, \dots, T_k)} \leq \sup_{(z_1, z_2, \dots, z_k) \in {\T^k}}\abs{p(z_1, z_2, \dots , z_k)} = \norm{p}_{C(\T^k)}.$$
    Furthermore, we say that the $k$-tuple $(T_1, T_2, \dots T_k)$ satisfies a \textit{matricial von Neumann inequality} if for every $n$, for every set of polynomials $p_{i,j}$ in $k$ variables for $1 \leq i,j \leq n$, we have the inequality:
    $$\norm{\left(p_{i,j}(T_1, T_2, \dots, T_k)\right)_{i,j}} \leq \sup_{(z_1, z_2, \dots, z_k) \in {\T^k}}\norm{\left(p_{i,j}(z_1, z_2, \dots , z_k)\right)_{i,j}} = \norm{(p_{i,j})_{i,j}}_{M_n(C(\T^k))}.$$
\end{defn}


We state and sketch a proof of the following theorem, which is now straightforward, and parts of which are somewhat well known. 
\begin{thm}\label{thm: matricial von Neumann's inequality yields unitary dilations}
    Let $T_1, T_2, \dots T_k$ be $k$ commuting contractions on a Hilbert space $\cH$. 
    Then, the following are equivalent:
    \begin{enumerate}
        \item[(i)] There exist $k$ commuting unitaries $U_1, U_2, \dots U_k$ on a Hilbert space $\K \supseteq \cH$ satisfying 
        $$T_1^{n_1} T_2^{n_2} \dots T_k^{n_k} = \P_\cH U_1^{n_1} U_2^{n_2} \dots U_k^{n_k} \vert_\cH,$$
        for all $n_1, n_2, \dots n_k \in \bN$.
        \item[(ii)] The $k$-tuple $(T_1, T_2, \dots T_k)$ satisfies a matricial von Neumann inequality. 
        \item[(iii)] Let $\A \subset C(\T^k)$ be the (non-selfadjoint) operator algebra that is the linear span of polynomials on $\T^k$. 
        The map $\phi: \A \to \B(\cH)$, $z_1^{n_1} z_2^{n_2} \dots z_k^{n_k} \mapsto T_1^{n_1} T_2^{n_2} \dots T_k^{n_k}$ is a completely contractive representation.
        \item[(iv)] The representation $T: \bN^k \to \B(\cH)$, that maps the canonical generators $e_i$ to $T_i$ has the extension property (A). 
    \end{enumerate}
\end{thm}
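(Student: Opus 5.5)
The plan is to prove (ii)$\iff$(iii), then (iii)$\iff$(iv), and finally (i)$\iff$(iv). The last equivalence will be read off from Theorem \ref{thm: Ore semigroup representations with property (A) have unitary dilations}, applied to $P=\bN^k$, which is right LCM, right Ore and amenable.

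\emph{(ii)$\iff$(iii).} This is a matter of unwinding definitions. The algebra $\A$ of polynomials sits inside the $C^\star$-algebra $C(\T^k)$, so its matrix norms are the norms of $M_n(C(\T^k))$, namely $\norm{(p_{i,j})}=\sup_{z\in\T^k}\norm{(p_{i,j}(z))}$. The map $\phi$ is multiplicative because the $T_i$ commute, and it is well defined because a polynomial is determined by its values on $\T^k$. So $\phi$ being completely contractive says exactly that $\norm{(p_{i,j}(T))}\le\norm{(p_{i,j})}_{M_n(C(\T^k))}$ for all $n$, which is the matricial von Neumann inequality. I will work with the norm closure of $\A$ where needed and extend $\phi$ by continuity.

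\emph{(iii)$\iff$(iv).} The key step is to identify $\A_\lambda(\bN^k)$ completely isometrically with the closure of the polynomials in $C(\T^k)$, via $V_{e_i}\mapsto z_i$. For this I use Theorem \ref{thm: boundary quotients for right-Ore semigroups}(ii): $\bN^k$ is right Ore with group $\Z^k$, so $V_p\mapsto\lambda_p$ extends to a complete isometry $\A_\lambda(\bN^k)\to C^\star_\lambda(\Z^k)$. The Fourier transform is a $\star$-isomorphism $C^\star_\lambda(\Z^k)\cong C(\T^k)$ taking $\lambda_{e_i}$ to $z_i$. Composing these, a completely contractive extension of $T$ to $\A_\lambda(\bN^k)$ corresponds exactly to a completely contractive $\phi$ as in (iii). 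As the paper notes after Definition \ref{defn: extension property (A)}, the extension is unique on the dense subalgebra, so the two conditions coincide.

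\emph{(i)$\iff$(iv).} The tuple $(U_1,\dots,U_k)$ in (i) is exactly a unitary dilation of the representation $T$ of $\bN^k$, since the compression identity holds for every monomial. Because $\bN^k$ is right LCM, right Ore and amenable, Theorem \ref{thm: Ore semigroup representations with property (A) have unitary dilations} gives both directions. For a more transparent proof of (i)$\Rightarrow$(iii) that avoids amenability, one can argue directly: the commuting unitaries $U_i$ generate a commutative $C^\star$-algebra whose spectrum is contained in $\T^k$. This gives a $\star$-representation of $C(\T^k)$, which is completely contractive, and compressing it to $\cH$ preserves complete contractivity.

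The main obstacle I expect is the completely isometric identification in (iii)$\iff$(iv), namely confirming that the operator space structure of $\A_\lambda(\bN^k)\subseteq\B(\ell_2(\bN^k))$ matches that of $C(\T^k)$. If I want to avoid citing Theorem \ref{thm: boundary quotients for right-Ore semigroups}, I would argue directly instead. $\ell_2(\bN^k)$ is the Hardy space $H^2(\T^k)$, and $\A_\lambda$ consists of analytic Toeplitz operators. The norm of $(T_{p_{i,j}})$ is then the $L^\infty$ norm of the matrix symbol, which by continuity equals the supremum over $\T^k$. This computation is standard but is the only step with real content.
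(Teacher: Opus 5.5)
Your proof is correct, but it is organised differently from the paper's. The paper runs the classical cycle (i)$\Rightarrow$(ii)$\Rightarrow$(iii)$\Rightarrow$(i):
\begin{itemize}
\item (i)$\Rightarrow$(ii) holds because the unitaries generate some $C(X)$ with $X\subseteq\T^k$.
\item (iii)$\Rightarrow$(i) extends $\phi$ to a unital completely positive map on the operator system $\A+\A^\star$, applies Arveson's extension theorem to reach $C(\T^k)$, and takes the Stinespring dilation.
\end{itemize}
The semigroup machinery enters only in (i)$\iff$(iv), through the theorem on unitary dilations for right Ore semigroups.

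You instead connect (iii) and (iv) directly, by identifying $\A_\lambda(\bN^k)$ completely isometrically with the closure of the polynomials in $C(\T^k)$. You do this either through the quoted right-Ore criterion (a complete isometry into $C^\star_\lambda(\Z^k)$) plus the Fourier transform, or through matrix-valued analytic Toeplitz operators on $H^2(\T^k)$. You then obtain the hard implication (iii)$\Rightarrow$(i) as (iii)$\Rightarrow$(iv)$\Rightarrow$(i). That is, it comes from a Dritschel--McCullough maximal dilation together with $C^\star_{env}(\A_\lambda(\bN^k))\cong C(\T^k)$, with no Arveson--Stinespring step.

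The paper's route has the advantage that the classical equivalence (i)$\iff$(iii) rests on nothing deeper than Arveson and Stinespring. Condition (iv) is reached only by going around the cycle, and amenability is used for (i)$\Rightarrow$(iv). Your route gives an explicit proof that extension property (A) for $\bN^k$ \emph{is} the matricial von Neumann inequality, which the paper asserts but only obtains indirectly through (i). It also presents the classical dilation theorem as a genuine special case of the paper's main theorem, at the price of invoking deep results for a classical fact.

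In fact, combining your identification and your direct compression argument for (i)$\Rightarrow$(iii) with the paper's Arveson--Stinespring step would prove the whole theorem without the boundary-quotient machinery or amenability at all.
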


\begin{proof}
    To prove $(i) \implies (ii)$, we just observe that for the commuting $k$-tuple of unitaries, they must be doubly commuting as well, and hence we may identify the $C^\star$-algebra they generate with $C(X)$ where $X$ is some compact subset of $\T^k$. 
    The rest is straightforward. 
    
    To prove $(ii) \implies (iii)$ is straightforward from the definition. 
    
    To prove $(iii) \implies (i)$, observe that since $\phi$ is unital completely contractive, so is its extension $\tilde{\phi}:\A + \A^\star \to \B(\cH)$ that maps $p+\bar{q} \mapsto \phi(p)+(\phi(q))^\star$ for all polynomials $p,q$. 
    Since every unital complete contraction on an operator system is unital completely positive \cite{Pa02}, we may now use Arveson's extension theorem to extend it to a unital completely positive map $\Phi: C(\T^k) \to \B(\cH)$. 
    The Stinespring dilation of this map will be a representation of $C(\T^k)$, and the unitary $U_i$ corresponding to the image of $z_i$ will be the unitary dilation of $T_i$. 

    To prove $(i) \iff (iv)$, note that since $\bN^k$ is Ore and amenable, $T$ having unitary dilation is equivalent to $T$ having extension property (A), by Theorem \ref{thm: Ore semigroup representations with property (A) have unitary dilations}. 
\end{proof}

Theorem \ref{thm: matricial von Neumann's inequality yields unitary dilations} brings things full circle in our paper: we demonstrate that the matricial von Neumann inequality is equivalent to extension property (A) for $\bN^k$, and unitary dilation is equivalent to dilation to the reduced boundary quotient for amenable Ore semigroups (of which $\bN^k$ are an example). 
Hence, we present extension property (A) as a generalisation of the matricial von Neumann inequality, and dilation to the reduced boundary quotient as a generalisation of unitary dilations, for semigroups more general than $\bN^k$. 
And just as the matricial von Neumann inequality is equivalent to existence of unitary dilations for $\bN^k$, our extension property (A) is equivalent to the existence of dilations to the boundary quotient. 
In this sense, our work is a true generalisation of Sz.-Nagy's dilation theorem and its counterparts. 



\begin{ex}
    To demonstrate the dilation phenomena we have proved in the general case, we provide a couple of well-known concrete examples 
    of semigroup representations which extend to completely contractive representations of the reduced semigroup operator algebra (i.e., satisfy extension property (A)) and hence the dilation property. 
    We refer the interested reader to \cite{Pa02} for an introduction to matricial von Neumann inequalities and unitary dilations, which are often an operator-theoretic tool in some of the dilation results stated below. 
    \begin{enumerate}
        \item[(i)] Any contractive representation of $\mathbb{N}$ has extension property (A). 
        This is equivalent to a matricial von Neumann's inequality which follows from Sz.-Nagy's dilation theorem \cite{SN53}, that says one may dilate every contraction on a Hilbert space to a unitary upto powers. 
        This unitary dilation induces a representation of $C(\T)$, which is the isomorphic to $\partial C^\star_\lambda(\mathbb{N})$. 

        \item[(ii)] Any contractive representation of $\mathbb{N}^2$ has extension property (A). 
        Let $T:\mathbb{N}^2 \to \B(\cH)$ be a contractive representation, that is, with the canonical generators of $\mathbb{N}^2$ as $e_1, e_2$, we have that $T_i = T(e_i)$ for $i=1,2$ are contractions such that $T_1 T_2 = T_2 T_1$. 
        By Ando's dilation theorem \cite{An63}, there exists a commuting pair of unitaries $U_1, U_2$ in $\B(\mathcal{K})$ with $\mathcal{K} \supseteq \cH$ such that for all $n_1,n_2 \in \mathbb{N}$, $T_1^{n_1} T_2^{n_2} = \P_\cH U_1^{n_1} U_2^{n_2} \vert_\cH$. 
        These unitaries help one obtain a matricial von Neumann inequality as well as induce a representation of $C(\T^2) \cong \partial C^\star_\lambda(\mathbb{N}^2)$. 

        \item[(iii)] Let $k \in \mathbb{N}$, and $T: \mathbb{N}^k \to M_2(\C)$ be a contractive representation, where $M_2(\C)$ denotes the space of all $2 \times 2$ matrices. 
        Then, $T$ has property (A). 
        A well known result by J. Holbrook \cite{Ho92} states that for every $k \in \mathbb{N}$, any $k$-tuple of commuting $2 \times 2$ matrices dilates to a $k$-tuple of commuting unitaries. 
        The rest follows, as this induces a unitary representation of $C(\T^k) \cong \partial C^\star_\lambda(\mathbb{N}^k)$, and also demonstrates extension property (A) through a matricial von Neumann's inequality. 
        \item[(iv)] Let $k \in \mathbb{N}$, and $V: \mathbb{N}^k \to \B(\cH)$ be an isometric representation. 
        Then, $V$ dilates to a unitary representation of $C(\T^k) \cong \partial C^\star_\lambda(\mathbb{N}^k)$, and hence also satisfies a matricial von Neumann's inequality, and thus extension property (A).
        We refer the reader to \cite{Pa02} for a proof that any $k$-tuple of commuting isometries dilates to a $k$-tuple of commuting unitaries.
    \end{enumerate}
\end{ex}

Note that none of the examples stated above, except for the $k=1$ case of (iv) actually satisfy either Nica-covariance or even Inequality \ref{eq: Boyu's condition for isometric Nica-covariant dilation} needed for a Nica-covariant dilation. 
This demonstrates our point that extension property (A) is in fact more general than the conditions required in \cite{LL22}. 
However, it is not trivially satisfied by just any contractive representation, as is demonstrated by the failure to obtain an Ando-type dilation result with $3$ or more contractions in general: S. Parrott \cite{Pa70} first gave an example of $3$ contractions which fail to satisfy a von Neumann inequality, and hence do not have extension property (A). 
How one might formulate a necessary condition for extension property (A) in terms of the generators of the representation generally, is a very interesting question for which we do not yet have a concrete answer. 

It is interesting to consider what the relations between the images of the generators of the dilation would be in the most general case. 
Note that one always has a canonical $\star$-homomorphism from the full crossed product to the reduced one, $\mathcal{Q}(P) \cong C(\partial \Omega_P) \rtimes G \to C(\partial \Omega_P) \rtimes_r G \cong \partial C^\star_\lambda (P)$ that maps the image of $v_p$ in the first to $q(V_p)$ in the second. 
Thus, in the right LCM case, by Equation \ref{eqn: foundation set}, $q(V_p) \in \partial C^\star_\lambda (P)$ are always going to satisfy the condition $\Pi_{f \in F}\left(1 - q(V_f) q(V_f)^\star\right) = 0$ for every foundation set $F$. 
This leads us to the following result:

\begin{cor}\label{cor: property A guarantees that there exists a dilation with boundary quotient relations}
    Let $P$ be a right LCM, group-embeddable semigroup, and let $T : P \to \B(\cH)$ be a representation with extension property (A) (i.e., it extends to a completely contractive representation of $\A_\lambda(P)$). 
    Then, $T$ has a dilation $\pi: P \to \B(\mathcal{K})$ for some $\mathcal{K} \supseteq \cH$ that satisfies 
    $$\Pi_{f \in F}\left(1_\mathcal{K} - \pi(f) \pi(f)^\star\right) = 0,$$
    for every foundation set $F \subset P$. 
\end{cor}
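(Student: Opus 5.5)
The plan is to combine Theorem \ref{thm: representations with property (A) extend to the boundary} with the observation made just before the statement, namely that the generators $q(V_p)$ of $\partial C^\star_\lambda(P)$ already satisfy the foundation set relations. Since $P$ is group-embeddable, Theorem \ref{thm: representations with property (A) extend to the boundary} applies to $T$. It gives a Hilbert space $\K \supseteq \cH$, a dilation $\pi: P \to \B(\K)$ of $T$, and a $\star$-representation $\psi: \partial C^\star_\lambda(P) \to \B(\K)$ with $(\psi\circ q)(V_p) = \pi(p)$ for all $p \in P$.

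The key step is to show that in $\partial C^\star_\lambda(P)$,
$$\Pi_{f \in F}\left(1 - q(V_f) q(V_f)^\star\right) = 0$$
for every foundation set $F$.

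\begin{itemize}
\item Since $P$ is right LCM and group-embeddable, Definition \ref{defn: boundary quotient in terms of foundation sets} gives $\mathcal{Q}(P) \cong C(\partial\Omega_P)\rtimes G$, using the identification $C(\partial\Omega_P)\cong C(\Omega_P)/I_\infty$ from \cite{LL22}.
\item There is a canonical surjective $\star$-homomorphism from the full partial crossed product to the reduced one, $\Lambda: \mathcal{Q}(P) \to C(\partial\Omega_P)\rtimes_r G = \partial C^\star_\lambda(P)$.
\item I must check that $\Lambda$ sends the image of $v_p$ to $q(V_p)$, and hence the image of $e_{fP} = v_f v_f^\star$ to $q(V_f)q(V_f)^\star$. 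Both images are the canonical copy of $p \in G$ multiplied by the unit in the respective crossed product, and $q$ is the quotient coming from restricting $D_\lambda(P) = C(\Omega_P)$ to $C(\partial \Omega_P)$ in Proposition \ref{prop: partial crossed product realisation of the reduced semigroup C* algebra}.
\item The relation $\Pi_{f\in F}(1-e_{fP})=0$ holds in $\mathcal{Q}(P)$ by definition, so applying $\Lambda$ yields the displayed relation.
\end{itemize}

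Finally, apply the $\star$-homomorphism $\psi$ to this relation. It preserves products, units and adjoints, and $\psi(q(V_f)) = \pi(f)$, so
$$\Pi_{f\in F}\left(1_\K - \pi(f)\pi(f)^\star\right) = \psi\Big(\Pi_{f\in F}\left(1 - q(V_f)q(V_f)^\star\right)\Big) = 0.$$
Here $\psi$ is unital because it arises from the unital maximal dilation via $C^\star_{env}$. The products in the relation are taken in a fixed order, but the range projections commute anyway since they are images of elements of the commutative algebra $D(P)$.

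The main obstacle is justifying that the canonical map $\mathcal{Q}(P) \to \partial C^\star_\lambda(P)$ matches generators as claimed. If citing \cite{LL22} for this is deemed insufficient, there is a direct route. The element $\Pi_{f\in F}(1-E_{fP})$ lies in $D_\lambda(P)=C(\Omega_P)$, and it vanishes on $\partial\Omega_P$ because it lies in $I_\infty$. Its image under $q$, which acts on $D_\lambda(P)$ by restriction to $\partial\Omega_P$, is therefore zero.
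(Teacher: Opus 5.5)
Your proposal is correct and follows the paper's own argument. You apply Theorem~\ref{thm: representations with property (A) extend to the boundary} to get $\pi$ and $\psi$, use the canonical surjection $\mathcal{Q}(P)\cong C(\partial\Omega_P)\rtimes G \to C(\partial\Omega_P)\rtimes_r G \cong \partial C^\star_\lambda(P)$ sending $v_p$ to $q(V_p)$ to transport the foundation set relations, and then push them through $\psi$. Your extra remarks (unitality of $\psi$, commutation of the range projections, and the direct route via $\Pi_{f\in F}(1-E_{fP})\in I_\infty$ vanishing on $\partial\Omega_P$) are sound details that the paper leaves implicit.
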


\begin{proof}
    The 
    proof follows from Corollary \ref{thm: representations with property (A) extend to the boundary}, and the discussion above. 
\end{proof}



\bibliographystyle{alpha}
\bibliography{bibliography.bib}

\end{document}